\documentclass[11pt,a4paper]{amsart}
\usepackage[T1]{fontenc}
\usepackage{lmodern}
\usepackage{amsmath,amssymb,amsthm,mathtools}
\usepackage{mathrsfs}
\usepackage{booktabs}
\newcommand{\rev}[1]{{#1}}
\newenvironment{revision}{\par\begingroup}{\par\endgroup}
\usepackage[margin=30mm]{geometry}
\usepackage{microtype}
\usepackage[hidelinks]{hyperref}
\hypersetup{pdftitle={Linear hereditary quasirandomness},pdfauthor={Jun Yue}}
\allowdisplaybreaks[1]
\numberwithin{equation}{section}

\newtheorem{theorem}{Theorem}[section]
\newtheorem{proposition}[theorem]{Proposition}
\newtheorem{lemma}[theorem]{Lemma}
\newtheorem{corollary}[theorem]{Corollary}

\newtheorem*{linearconjecture}{Linear dependence conjecture}
\theoremstyle{remark}
\newtheorem{remark}[theorem]{Remark}

\newcommand{\R}{\mathbb R}
\newcommand{\one}{\mathbf 1}
\newcommand{\cut}[1]{\left\lVert #1\right\rVert_{\square}}
\newcommand{\norm}[2]{\left\lVert #1\right\rVert_{#2}}
\newcommand{\tr}{\operatorname{tr}}
\newcommand{\Def}{\operatorname{def}}
\newcommand{\Disc}{\operatorname{disc}}
\newcommand{\Pstar}{\mathcal P^{*}}

\title[Linear hereditary quasirandomness]{Linear hereditary quasirandomness}
\author{Jun Yue}
\address{School of Mathematical Sciences, Tiangong University, Tianjin, China}
\email{yuejun06@126.COM}
\date{}
\subjclass[2020]{05C80, 05C50, 05C35}
\keywords{Hereditary quasirandomness, four-cycle, triangle-free graph, cut norm, Grothendieck's inequality}

\begin{document}

\begin{abstract}
Conlon, Fox and Sudakov (\emph{Math. Proc. Cambridge Philos. Soc.}
\textbf{164} (2018), 385--399) conjectured that hereditary subgraph
counts force quasirandomness with linear dependence on the counting
error. We prove this for every graph containing an edge.
Let \(H\) have \(r\) vertices and \(m\ge1\) edges, and fix
\(p\in(0,1)\). If every subset \(S\) of an \(n\)-vertex graph \(G\)
contains \(p^m|S|^r\pm\delta n^r\) labelled injective copies of \(H\),
then \(\lvert e_G(A,B)-p|A||B|\rvert\le C_{H,p}\delta n^2\)
for all \(A,B\subseteq V(G)\), where \(e_G(A,B)\) counts ordered
adjacent pairs. The proof extracts averaged rooted counts and
represents each nonlinear term by a linear map in one selected
edge factor. Composing two such maps gives an edge substitution
with a superlinear cut-norm bound. A separate star expansion
gives stronger local estimates for triangle-free graphs.
The linear order is best possible even when
\(2e(G)/n^2=p+O(n^{-2})\).
\end{abstract}

\maketitle

\section{Introduction}

All graphs considered in this paper are finite, simple, and undirected. Fix a graph \(H\)
with \(r\) vertices and \(m\ge1\) edges, and let \(p\in(0,1)\).
For \(S\subseteq V(G)\), let \(N_H(G[S])\) denote the number of
injective edge-preserving maps from \(V(H)\) to \(S\).
Thus copies are labelled and are not required to be induced.
We say that an \(n\)-vertex graph \(G\) satisfies
\(\Pstar_{H,p}(\delta)\) if
\begin{equation}\label{eq:hereditary}
 \bigl|N_H(G[S])-p^m|S|^r\bigr|\le\delta n^r
 \qquad\text{for every }S\subseteq V(G).
\end{equation}

To compare this hereditary counting condition with uniform
edge distribution, for \(A,B\subseteq V(G)\) we write
\(e_G(A,B)=|\{(x,y)\in A\times B:\{x,y\}\in E(G)\}|\).
This counts ordered adjacent pairs, so that
\(e_G(S,S)=2e(G[S])\). The normalized edge discrepancy is
\[
 \Disc_p(G)=\frac1{n^2}\max_{A,B\subseteq V(G)}
 \bigl|e_G(A,B)-p|A||B|\bigr|.
\]
Throughout, \(H\) and \(p\) are fixed. Constants may depend on
\(H\) and \(p\), as indicated by their subscripts, but are
independent of \(G\), \(n\), and \(\delta\).

The connection between subgraph counts and uniform edge
distribution is central to the theory of quasirandom graphs.
Chung, Graham and Wilson~\cite{CGW} established the equivalence
of several counting, spectral, and edge-distribution conditions
for dense quasirandomness. Simonovits and S\'os~\cite{SS}
subsequently showed that the hereditary counting condition
\eqref{eq:hereditary} characterizes quasirandomness for every
graph \(H\) containing an edge. Their proof uses the regularity
lemma and yields tower-type quantitative bounds. Conlon, Fox
and Sudakov~\cite{CFS} improved this dependence to polynomial
for general \(H\) and to linear when \(H\) is complete.
In their concluding discussion, they asked whether linear
dependence holds for every \(H\), highlighting the case
\(H=C_4\).

\begin{linearconjecture}[Conlon--Fox--Sudakov {\cite[Section~4]{CFS}}]
For every graph \(H\) with at least one edge and every \(p\in(0,1)\),
there is \(C=C(H,p)>0\) such that, for each \(\delta>0\), every
sufficiently large graph satisfying \(\Pstar_{H,p}(\delta)\)
has \(\Disc_p(G)\le C\delta\).
\end{linearconjecture}

The question above is the unnumbered hereditary-counting question
posed in Section~4 of~\cite{CFS}. It is distinct from their
Conjecture~4.1, which concerns a global forcing condition;
related global forcing conditions are also studied by Reiher
and Schacht~\cite{RS}. Our main theorem gives an affirmative
answer to the hereditary-counting question.

\begin{theorem}\label{thm:main}
Let \(H\) be a fixed graph with at least one edge, and let
\(p\in(0,1)\). There exists a constant \(C=C(H,p)>0\) such that,
for every \(n\ge1\), every \(\delta\ge0\), and every \(n\)-vertex
graph \(G\),
\[
 \Pstar_{H,p}(\delta)
 \quad\Longrightarrow\quad
 \Disc_p(G)\le C\delta.
\]
The graph \(H\) may be disconnected and may contain isolated
vertices. The same conclusion holds if \(N_H(G[S])\) in
\eqref{eq:hereditary} counts all edge-preserving maps
\(V(H)\to S\), without requiring injectivity.
\end{theorem}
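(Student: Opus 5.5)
We write \(W=A_G-p(J-I)\) for the deviation matrix, with \(A_G\) the adjacency matrix and \(J\) the all-ones matrix. The goal is to show \(\cut{W}\le C\delta n^2\), where \(\cut{W}=\max_{A,B}|\one_A^{T}W\one_B|\). The diagonal and non-injective corrections contribute only \(O(n^{r-1})\); since the count error is an integer we may assume \(\delta\ge n^{-r}\) whenever \(\delta>0\), and I would dispose of these terms by that standard reduction.

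\textbf{Step 1: from subsets to weights.} Consider \(f(x)=N_H(x)\), the count of \(H\) in the graph with vertex weights \(x\in[0,1]^n\), where each map is weighted by the product of \(x_v\) over its image. Then \(f(x)-p^m(\sum x)^r\) is a polynomial that is multilinear in the injective terms, and it is bounded by \(\delta n^r\) at all \(0/1\) points. By multilinearity, and after inclusion--exclusion between disjoint sets, this extends to \(O(\delta n^r)\) on the whole cube. We then expand \(A_G=pJ+W\) inside \(f\). The result is \(\sum_{F\subseteq E(H)} p^{m-|F|} t_F(W;x)\), where \(t_F\) is the weighted homomorphism density of \(W\) along the edges \(F\) and each remaining vertex contributes a factor \(\sum x\). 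The terms with \(|F|=1\) give \(m\,p^{m-1}(\sum x)^{r-2}\,x^{T}Wx\). This is the linear term we want, because bounding \(x^TWx\) over \(x\in\{0,1\}^n\) controls the cut norm up to a factor \(4\) via polarisation \(A\cup B\), \(A\setminus B\), and so on. Choosing \(x=\one_S+t\one_T\) and extracting coefficients in \(t\) yields averaged rooted counts.

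\textbf{Step 2: linearising the nonlinear terms.} For each \(F\) with \(|F|\ge2\), I would select one edge \(e\in F\) and write \(t_F(W;x)=\langle W, L_F(W)\rangle\). Here \(L_F\) is the matrix obtained by summing over all vertices outside \(e\), which makes it a rooted count. By the counting lemma, \(\cut{L_F(W)}\) is at most \(n^{r-2}\) times \(\cut{W}/n^2\), up to constants, so each such term is \(O(\cut{W}^2/n^2)\) after normalisation. To make this rigorous I would compose two such maps, substituting the selected edge twice. This gives the bound \(|t_F|\le C n^{r}(\cut{W}/n^2)^{1+c}\) with some \(c>0\), using Grothendieck's inequality to pass from \(\pm1\) test vectors to the bounded matrices \(L_F(W)\).

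\textbf{Step 3: the obstacle.} With \(\varepsilon=\cut{W}/n^2\), Steps 1 and 2 combine to \(\varepsilon\le C(\delta+\varepsilon^{1+c})\). This gives linearity only once \(\varepsilon\) is already small. The small-\(\varepsilon\) input would come from the Simonovits--Sós theorem, or from the polynomial bound of Conlon--Fox--Sudakov: \(\varepsilon\le C'\delta^{c'}\), which is at most \(\varepsilon_0\) when \(\delta\) is small. For \(\delta\ge\delta_0\) the conclusion is trivial, since \(\Disc_p\le1\). The hardest step is choosing \(x\) so that the linear term is isolated with a nonvanishing coefficient while the global terms in \(\sum x\) cancel. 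This matters especially when \(H\) has isolated vertices or several components, where degree-type terms \(\one^TW\one\) appear. I would handle these by first bounding the row sums of \(W\) in \(\ell_1\) using star-type extractions, namely taking \(x\) concentrated on a single vertex set of size \(\theta n\) and differentiating in \(\theta\).
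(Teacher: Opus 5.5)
Your outline has the right global shape: a random-subset passage to weights, polarization to rooted averages, a local inequality with a superlinear remainder, and Simonovits--S\'os to reach the small-discrepancy regime. The local inequality itself is not established, however, because the central claim of Step~2 is false.

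\textbf{The gap in Step~2.} You claim \(|t_F|\le Cn^r(\cut{W}/n^2)^{1+c}\) for every \(F\) with \(|F|\ge2\), and this fails.

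\emph{The justification is invalid.} Passing from \(\cut{L_F(U)}\le4d\) to \(\langle U,L_F(U)\rangle=O(d^2)\) does not work, because \(\langle U,K\rangle\) is not controlled by \(\cut{U}\cut{K}\). Taking \(K=U\) gives \(\norm{U}{2}^2\), which is of order one for a random graph while \(d=\cut{U}\) is tiny.

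\emph{The conclusion also fails term by term.} If \(H\) has a vertex of degree at least two, the cherry \(K_{1,2}\) occurs among your \(F\). At \(x=\one\) its term is a positive multiple of \(A_2=\int a^2\,d\mu\), where \(a=T_U\one\) is the degree deviation. Take the adjacency kernel equal to \(1\) on pairs meeting a set of measure \(\sigma\) and to \(p\) elsewhere. Then both \(d\) and \(A_2\) are of order \(\sigma\), so this term is linear in \(d\).

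\emph{Composition does not repair it.} Composing selected-edge maps controls the kernel \(L_F(L_F(U))\), not the scalar \(\langle U,L_F(U)\rangle\). Hence \(\varepsilon\le C(\delta+\varepsilon^{1+c})\), although true, does not follow from your Steps~1--2.

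\textbf{What the paper does instead.} It never bounds an individual nonlinear term.
\begin{itemize}
\item It works with the averaged two-root kernel \(M_H=\gamma\mathcal L_rU+\mathcal B_UU\). The marginal terms of the single-edge part are removed by the explicit inverse \(\mathcal P_r\).
\item It uses the exact identity \(U=h-\mathcal A_Uh+\mathcal A_U^2U\), where \(h=\gamma^{-1}\mathcal P_rM_H\) satisfies \(\cut{h}\le C_Hp^{-(m-1)}\Def_{H,p}(W)\). So \(\mathcal A_U\) needs only a linear bound, and only \(\mathcal A_U^2U\) must be superlinear.
\item Each term of \(\mathcal A_U^2U\) is a two-root density of an edge-substituted graph, and this graph is provably never a clique with isolated vertices. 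That is how clique terms are avoided.
\item Such a graph either contains an induced \(P_4\), \(2K_2\), \(C_4\), paw or diamond, giving \(O(d^{5/4})\) via Grothendieck's bound \(\tr(T^4)\le8K_Gd^{3/2}\) for the last three. Otherwise it is a star with at least three edges, bounded by \(A_2+2\sqrt{A_2d}\).
\item The star case needs the separate estimate \(A_2\le C_Hp^{-2(m-1)}(\Def_{H,p}(W)+d^2)\). This comes from the one-root average via the coercivity \(t\Phi_H(t)\ge r_+p^{m-1}t^2\) for \(t\ge-p\). The remaining one-root terms are controlled only in \(L^2\), through an induced \(2K_2\) in a doubled graph.
\end{itemize}
Your row-sum extraction is not made precise enough to play this role. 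The paper obtains an \(\ell_1\) bound on \(a\) only for triangle-free \(H\).

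\textbf{The finite-size reduction also fails.} Knowing \(\delta\ge n^{-r}\) cannot absorb an additive \(O(1/n)\) error into \(C\delta\). The paper uses \(\Disc_p(G)\ge\alpha_p/(4\sqrt{3n})\), i.e.\ \(1/n\le\beta_pd^2\), to fold that error into the superlinear remainder before the threshold argument.
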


Theorem~\ref{thm:main} establishes linear dependence on \(\delta\)
for each fixed pair \(H,p\). The local estimates below have
constants bounded by \(c_Hp^{-6m}\), but the passage to the global
result also uses a fixed-accuracy threshold from the hereditary
quasirandomness theorem. Consequently, the proof does not give
a polynomial bound in \(p^{-1}\) for \(C(H,p)\). The conclusion holds for every finite \(n\), with no additive \(n^{-1}\) error. This is achieved by using a discrepancy lower bound for simple graphs to absorb the finite-size errors.

Taking \(A=B=S\) in the definition of \(\Disc_p(G)\) gives
the hereditary edge-count condition. We therefore obtain the
following special case, which includes \(H=C_4\), the case
highlighted in~\cite{CFS}.

\begin{corollary}\label{cor:linear}
Let \(H\) be a fixed triangle-free graph with at least one edge,
and let \(p\in(0,1)\). There exists a constant \(C=C(H,p)>0\)
such that, for every \(n\ge1\), every \(\delta\ge0\), and every
\(n\)-vertex graph \(G\),
\[
 \Pstar_{H,p}(\delta)
 \quad\Longrightarrow\quad
 \Pstar_{K_2,p}(C\delta).
\]
\end{corollary}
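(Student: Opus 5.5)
This corollary should follow directly from Theorem~\ref{thm:main}, specialised to triangle-free \(H\). The hypothesis ``triangle-free'' plays no role in the deduction itself; the theorem already applies to every \(H\) with at least one edge. The restriction is there only to match the paper's later local estimates for triangle-free graphs. So the plan is to apply Theorem~\ref{thm:main} with the given \(H\) and \(p\), obtaining a constant \(C_0=C(H,p)\) such that \(\Pstar_{H,p}(\delta)\) implies
\[
 \bigl|e_G(A,B)-p|A||B|\bigr|\le C_0\delta n^2\qquad\text{for all }A,B\subseteq V(G).
\]

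Next I specialise to \(A=B=S\) for an arbitrary \(S\subseteq V(G)\). Since \(e_G(S,S)=2e(G[S])\) counts ordered adjacent pairs, this gives \(|2e(G[S])-p|S|^2|\le C_0\delta n^2\).

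It remains to check that this is exactly \(\Pstar_{K_2,p}(C\delta)\). With \(H=K_2\) we have \(r=2\) and \(m=1\), and \(N_{K_2}(G[S])\) is the number of injective edge-preserving maps from \(V(K_2)\) to \(S\). Each edge of \(G[S]\) contributes exactly two such maps, so \(N_{K_2}(G[S])=2e(G[S])=e_G(S,S)\). Condition \eqref{eq:hereditary} for \(K_2\) therefore reads \(|e_G(S,S)-p|S|^2|\le C\delta n^2\), which holds with \(C=C_0\).

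There is no real obstacle here. The only point needing care is the normalisation: labelled copies of \(K_2\) correspond to ordered adjacent pairs, not to edges. The factor \(2\) is therefore already absorbed, and the constant does not change.
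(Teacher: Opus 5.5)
Your proposal is correct and is essentially the paper's own argument: the paper also deduces the corollary from Theorem~\ref{thm:main} by taking \(A=B=S\) and using \(N_{K_2}(G[S])=e_G(S,S)\), which gives \(\bigl|N_{K_2}(G[S])-p|S|^2\bigr|\le\Disc_p(G)n^2\le C(H,p)\delta n^2\). You are also right that the triangle-free hypothesis is never used in this deduction, since Theorem~\ref{thm:main} applies to every \(H\) with at least one edge.
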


A stronger counting hypothesis allows a separate vertex set
for each vertex of the counted graph. For a graph \(F\) with
vertex set \([s]\), let \(N_F(G;S_1,\ldots,S_s)\) denote the
number of injective edge-preserving maps \(\varphi\colon V(F)\to V(G)\)
such that \(\varphi(i)\in S_i\) for every \(i\in[s]\). He~\rev{\cite[Theorem~5 and Lemma~8]{He}} proved a linear quasirandomness bound from prescribed-set counts for every fixed graph \(F\) with at least one edge, using homomorphism
counts rather than injective counts. This formulation is part
of the configuration-counting approach of Reiher and Schacht
discussed in~\cite[Section~2]{He}. The essential distinction between the two hypotheses is the freedom to choose the sets \(S_i\) independently; both count
labelled copies. Polarizing \eqref{eq:hereditary} controls
averages over assignments of weights to vertices, rather than
each prescribed assignment separately. Nevertheless,
Theorem~\ref{thm:main}, together with the standard counting
lemma, yields the following prescribed-set estimate.

\begin{corollary}\label{cor:prescribed}
Let \(H\) be a fixed graph with at least one edge, let \(F\)
be a fixed graph with vertex set \([s]\) and \(\ell\) edges,
and let \(p\in(0,1)\). There exists a constant
\(C=C(H,F,p)>0\) such that, for every \(n\ge1\) and every
\(\delta\ge0\), any \(n\)-vertex graph \(G\) satisfying
\(\Pstar_{H,p}(\delta)\) also satisfies
\[
 \left|N_F(G;S_1,\ldots,S_s)
       -p^\ell\prod_{i=1}^s|S_i|\right|
 \le C\delta n^s
\]
for all \(S_1,\ldots,S_s\subseteq V(G)\), with no disjointness
assumption. The conclusion remains valid if the hypothesis,
the conclusion, or both are formulated using homomorphism
counts.
\end{corollary}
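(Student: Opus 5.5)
Corollary~\ref{cor:prescribed} follows from Theorem~\ref{thm:main} and a standard counting lemma in cut norm. Write \(A\) for the adjacency matrix of \(G\) and \(W=A-pJ\), where \(J\) is the all-ones matrix. Theorem~\ref{thm:main} gives \(\Disc_p(G)\le C_0\delta\) with \(C_0=C(H,p)\). Hence, for all \(X,Y\subseteq V(G)\),
\[
 \Bigl|\sum_{x\in X,\,y\in Y}W(x,y)\Bigr|\le C_0\delta n^2 .
\]
Extending by multilinearity, for all functions \(f,g\colon V(G)\to[0,1]\) we get \(\bigl|\sum_{x,y}f(x)W(x,y)g(y)\bigr|\le C_0\delta n^2\). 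For \([-1,1]\)-valued \(f,g\) we split each into positive and negative parts, which costs a factor \(4\).

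\emph{Homomorphism counts.} Order the edges of \(F\) as \(e_1,\dots,e_\ell\). Write the homomorphism count as
\[
 \sum_{\varphi}\prod_{i}\one_{S_i}(\varphi(i))\prod_{j} A(\varphi(e_j)),
\]
where \(\varphi\) ranges over all maps \([s]\to V(G)\). Substitute \(A=pJ+W\) one edge at a time and telescope. The difference between the count with edges \(e_1,\dots,e_{j-1}\) replaced by \(pJ\) and the count with \(e_1,\dots,e_j\) replaced by \(pJ\) is a sum in which \(e_j=\{a,b\}\) carries the factor \(W\). Every other factor has values in \([0,1]\) and depends on at most one of \(\varphi(a),\varphi(b)\) besides the remaining coordinates. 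Fix all coordinates other than \(\varphi(a),\varphi(b)\). The summand then has the form \(f(\varphi(a))W(\varphi(a),\varphi(b))g(\varphi(b))\) with \(f,g\) taking values in \([0,1]\). The factors on the edge \(ab\) itself are exactly \(W\), because \(F\) is simple. So each such inner sum is at most \(C_0\delta n^2\) in absolute value, and summing over the \(n^{s-2}\) outer coordinates bounds the \(j\)th difference by \(C_0\delta n^s\). The fully substituted count equals \(p^\ell\prod_i|S_i|\). This gives the homomorphism version of the conclusion with constant \(\ell C_0\).

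\emph{Injective versus homomorphism counts.} Non-injective maps \(\varphi\) number at most \(\binom s2 n^{s-1}\), so the two versions of the conclusion differ by at most \(s^2n^{s-1}\). It remains to absorb this term into \(C\delta n^s\). This finite-size step is the main obstacle, since \(\delta\) may be much smaller than \(1/n\).

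I would argue as the paper does for Theorem~\ref{thm:main}, using that \(G\) is simple. Taking \(X=Y=\{v\}\) gives \(W(v,v)=-p\) for every \(v\), so choosing \(X=Y=V(G)\) in the discrepancy definition shows that \(\Disc_p(G)\) cannot be too small compared with \(1/n\). More precisely, I would invoke the discrepancy lower bound for simple graphs mentioned after Theorem~\ref{thm:main}. It should say that \(\Disc_p(G)\ge c_p/n\) for every \(n\)-vertex graph \(G\), where the point is that the entries \(A(x,y)\in\{0,1\}\) cannot all equal \(p\). Then \(n^{s-1}\le c_p^{-1}\Disc_p(G)\,n^s\le c_p^{-1}C_0\delta n^s\), and the error is absorbed.

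If the hypothesis is stated with homomorphism counts, Theorem~\ref{thm:main} already covers it, so all four combinations follow with \(C=\ell C_0+s^2c_p^{-1}C_0\). The only step needing real care is confirming that the lower bound \(\Disc_p(G)\gtrsim 1/n\) holds uniformly for every \(n\ge1\); for small \(n\) this is immediate from the diagonal, since \(|e_G(\{v\},\{v\})-p|=p\).
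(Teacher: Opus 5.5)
Your proof is correct and follows the paper's route: a telescoping counting lemma against the constant kernel $p$, a union bound on noninjective maps, absorption of the $\binom{s}{2}/n$ collision term via the finite-size discrepancy bound of Lemma~\ref{lem:finitecut}, and finally Theorem~\ref{thm:main}; your use of $[0,1]$-valued weights even gives the constant $\ell$ where the paper has $4\ell$. The one weak spot is your heuristic for the finite-size step: the diagonal test $X=Y=\{v\}$ gives only $\Disc_p(G)\ge p/n^2$, and the test $X=Y=V(G)$ can give zero, so the bound uniform in $n$ really comes from Lemma~\ref{lem:finitecut}, which yields $1/n\le\beta_p\Disc_p(G)^2\le\beta_p\Disc_p(G)$ for every $n\ge1$.
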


Taking \(F=H\) gives the conversion proposed
in~\cite[Conjecture~7]{He} for \(0<p<1\).
Corollary~\ref{cor:prescribed} follows from
Theorem~\ref{thm:main} by the standard counting lemma,
without a separate argument to recover edge discrepancy.

\begin{revision}
Related recent work treats different counting hypotheses.
Li and Liu~\cite{LL} study individual edge-rooted densities,
with a stability estimate for kernels bounded away from zero.
Wang, Wang and Yue~\cite{WWY} classify quasirandomness conditions
based on balanced and repeated-class counts. These results
concern different rooted or partition constraints; the question
here is linear error dependence from counts in every single
vertex subset.

The standard ingredients include polarization, cut-norm
counting estimates, and qualitative hereditary quasirandomness.
The main quantitative argument below is the composition of
selected-edge corrections and the resulting local recovery
estimate. Corollaries~\ref{cor:linear} and~\ref{cor:prescribed}
are consequences of that estimate, rather than independent
recovery theorems.
\end{revision}

\medskip
\noindent\textbf{Proof overview.}
The key step is to represent each nonlinear rooted term
exactly by selecting one edge factor and fixing all the
remaining factors. This defines a linear map on kernels.
Unlike the derivative of the counting functional, which
sums over all choices of the selected edge, this representation
uses a single chosen edge in each term.

An explicit marginal
inverse then inverts the single-edge contribution, including
its one-variable terms. Composing the resulting correction
map with itself corresponds to replacing an edge of one graph
by a two-root density of another. The substituted graph is
simple and cannot consist of a clique together with isolated
vertices. Its contribution is therefore controlled either
by one of five induced patterns or by a separate estimate
for large stars. This yields the local bound
\(d\le c_Hp^{-6m}(\xi+d^{5/4})\), where
\(d=\cut{W-p}\) and \(\xi=\Def_{H,p}(W)\).
The qualitative hereditary quasirandomness theorem ensures
that \(d\) is sufficiently small to absorb the superlinear
term.

For triangle-free \(H\), a separate star expansion gives the
stronger estimate \(d\le c_Hp^{-3m}(\xi+d^{3/2})\).
If \(H\) also contains no four-cycle, the remainder improves
further to \(d^2\). These estimates make explicit how the
absence of short cycles strengthens the local bound.

Grouping star terms near a constant kernel already appears
in Lov\'asz's proof of the local Simonovits--Sidorenko
theorem~\cite[proof of Theorem~4.1]{Lovasz}, which establishes
local lower bounds for bipartite graph densities.
In the present setting, the error estimates must hold
uniformly for signed vertex weights, and the triangle-free
graph \(H\) need not be bipartite. For general \(H\), the
edge-substitution argument handles the clique terms that
are not controlled by the star expansion.

The remainder of the paper is organized as follows.
Section~\ref{sec:tools} develops the rooted-count and
signed-kernel estimates used throughout the paper.
Section~\ref{sec:general} establishes the general local estimate,
with the composed-correction bound stated separately in
Lemma~\ref{lem:squaredcorrection}.
Section~\ref{sec:reduction} applies this estimate to prove
Theorem~\ref{thm:main} and Corollary~\ref{cor:prescribed}.
Section~\ref{sec:proof} derives stronger local estimates for
triangle-free graphs, independently of the proof of the
general theorem.
Section~\ref{sec:remarks} discusses sharpness and parameter
dependence. Finally, Appendix~\ref{sec:fourcycle} provides
the explicit expansions for four- and five-cycles.

\section{Rooted counts and signed-kernel estimates}\label{sec:tools}

We work on a finite probability space \((\Omega,\mu)\), with
all points of measure zero discarded. Unless otherwise stated,
all norms and integrals are taken with respect to the
appropriate products of \(\mu\). For a kernel \(K\), define
its cut norm by
\[
 \cut{K}=\max_{A,B\subseteq\Omega}
 \left|\int_{A\times B}K(x,y)\,d\mu(x)d\mu(y)\right|,
\]
and let \(T_K\) be the associated integral operator,
\(T_Kf(x)=\int K(x,y)f(y)\,d\mu(y)\).
For every real kernel \(K\), the cut norm and the
\(L^\infty\)-to-\(L^1\) operator norm satisfy
\begin{equation}\label{eq:cutdual}
 \cut{K}\le
 \sup_{\substack{\norm{f}{\infty}\le1\\\norm{g}{\infty}\le1}}
 \left|\int f(x)K(x,y)g(y)\,d\mu(x)d\mu(y)\right|
 =\norm{T_K}{\infty\to1}
 \le4\cut{K}.
\end{equation}
\rev{This is the standard cut-norm comparison; see
\cite[Lemma~2.1]{AN}. On a finite probability space it follows
from the matrix version by incorporating the atom weights
into the matrix entries. The middle equality is
\(L^1\)--\(L^\infty\) duality.}

This framework includes finite graphs: if \(W\) is the
adjacency kernel of \(G\) on the uniform probability space
\(V(G)\), with \(W(x,x)=0\), then
\(\cut{W-p}=\Disc_p(G)\).
Thus edge discrepancy is precisely the cut norm of the
deviation from the constant kernel \(p\).

We also use weighted homomorphism densities. For a graph \(F\)
and vertex weights \(f_v\colon\Omega\to\R\), define
\begin{equation}\label{eq:weightedcount}
 t(F,K;\mathbf f)=
 \int_{\Omega^{V(F)}}
 \prod_{uv\in E(F)}K(x_u,x_v)
 \prod_{v\in V(F)}f_v(x_v)\,d\mu^{V(F)}(\mathbf x).
\end{equation}
The variables \(x_v\) are not required to be distinct.
When all vertex weights equal the same function \(f\), we
write \(t(F,K;f)\).

To express the hereditary counting error in this setting,
for a symmetric kernel \(W\colon\Omega^2\to[0,1]\) define
\[
 \Def_{H,p}(W)=
 \sup_{0\le f\le1}
 \left|t(H,W;f)-p^m\left(\int f\,d\mu\right)^r\right|.
\]
This quantity measures the counting error over all
\([0,1]\)-valued vertex weights, extending the tests given
by indicator functions of subsets.
All constants in the kernel estimates below are independent
of \(\Omega\) and \(\mu\).

\subsection{Polarization and averaged roots}\label{sec:polarization}

We first use polarization to pass from a common vertex weight
to averages over assignments of distinct vertex weights.
Identify \(V(H)\) with \([r]\), and define the symmetric
\(r\)-linear form
\[
\begin{aligned}
 B_W(f_1,\ldots,f_r)
 ={}&\frac1{r!}\sum_{\sigma\in\mathfrak S_r}
 \int_{\Omega^r}
 \prod_{uv\in E(H)}W(x_u,x_v)
 \prod_{i=1}^r f_{\sigma(i)}(x_i)\,d\mu^r(\mathbf x)\\
 &-p^m\prod_{i=1}^r\int f_i\,d\mu.
\end{aligned}
\]
On the diagonal, this reduces to
\(P_W(f):=B_W(f,\ldots,f)
=t(H,W;f)-p^m(\int f\,d\mu)^r\).
Thus \(\Def_{H,p}(W)\) controls the diagonal values of \(B_W\)
on \([0,1]\)-valued functions. The following polarization
estimate extends this control to arbitrary tuples of such
functions.

\begin{lemma}[\rev{Polarization; {\cite[Appendix, (A.1)]{Thomas}}}]\label{lem:polarization}
Let \(B_W\) and \(P_W\) be as above, and put
\(b_r=\frac1{r!}\sum_{k=1}^r\binom rk k^r\).

If \(\sup_{0\le f\le1}|P_W(f)|\le\xi\), then
\[
 |B_W(f_1,\ldots,f_r)|\le b_r\xi
 \qquad\text{whenever }0\le f_i\le1\text{ for all }i\in[r].
\]
\end{lemma}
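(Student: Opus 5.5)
The plan is to use the standard polarization identity for symmetric multilinear forms, in its inclusion--exclusion form:
\[
 B_W(f_1,\ldots,f_r)=\frac1{r!}\sum_{\emptyset\ne T\subseteq[r]}(-1)^{r-|T|}\,P_W\Bigl(\sum_{i\in T}f_i\Bigr).
\]
To justify it, expand \(P_W(\sum_{i\in T}f_i)=\sum_{j_1,\ldots,j_r\in T}B_W(f_{j_1},\ldots,f_{j_r})\) by multilinearity and symmetry. Then sum over \(T\) with signs. A tuple \((j_1,\ldots,j_r)\) whose index set \(J\) is a proper subset of \([r]\) receives total coefficient \(\sum_{T\supseteq J}(-1)^{r-|T|}=0\). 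Only the \(r!\) permutations survive, each contributing \(B_W(f_1,\ldots,f_r)\). This requires \(B_W\) to be genuinely multilinear and symmetric, which holds because both parts of its definition are. The symmetrized integral is symmetric by construction, and \(p^m\prod_i\int f_i\) is symmetric and multilinear. The definition also confirms that \(B_W(f,\ldots,f)=P_W(f)\).

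Next I reduce to \([0,1]\)-valued arguments. The function \(g_T=\sum_{i\in T}f_i\) takes values in \([0,k]\), where \(k=|T|\), so \(g_T/k\) satisfies \(0\le g_T/k\le1\). By \(r\)-homogeneity of \(P_W\), \(P_W(g_T)=k^rP_W(g_T/k)\), hence \(|P_W(g_T)|\le k^r\xi\). Applying the triangle inequality to the identity and counting \(\binom rk\) sets of each size \(k\) gives
\[
|B_W(f_1,\ldots,f_r)|\le\frac1{r!}\sum_{k=1}^r\binom rk k^r\xi=b_r\xi.
\]

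There is no real obstacle here. The only point needing care is the combinatorial verification of the polarization identity, namely that the alternating sum annihilates non-surjective index tuples. This is the standard computation cited from \cite{Thomas}. The rest is homogeneity and the triangle inequality.
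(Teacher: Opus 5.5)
Your proof is correct and matches the paper's argument: the inclusion--exclusion polarization identity, rescaling each \(\sum_{i\in I}f_i\) by \(1/|I|\) together with \(r\)-homogeneity of \(P_W\), and the triangle inequality summed by subset size. The only difference is that you verify the polarization identity yourself: non-surjective index tuples receive coefficient \((1-1)^{r-|J|}=0\). The paper simply cites this identity from Thomas.
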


\begin{proof}
Fix functions \(f_1,\ldots,f_r\) with \(0\le f_i\le1\).
Since \(B_W\) is symmetric and \(r\)-linear, its diagonal
polynomial satisfies \rev{the classical polarization identity~\cite[Appendix, (A.1)]{Thomas}}
\[
 r!B_W(f_1,\ldots,f_r)
 =\sum_{I\subseteq[r]}(-1)^{r-|I|}
 P_W\Bigl(\sum_{i\in I}f_i\Bigr).
\]
The term corresponding to \(I=\varnothing\) is zero because
\(P_W(0)=0\). For each nonempty \(I\subseteq[r]\), write
\(k=|I|\) and \(g_I=k^{-1}\sum_{i\in I}f_i\).
Then \(0\le g_I\le1\) pointwise, so the hypothesis and
\(r\)-homogeneity give
\(\bigl|P_W(\sum_{i\in I}f_i)\bigr|
=k^r|P_W(g_I)|\le k^r\xi\).
Applying the triangle inequality and grouping subsets by
their cardinality, we obtain
\[
 |B_W(f_1,\ldots,f_r)|
 \le \frac{\xi}{r!}
      \sum_{\varnothing\ne I\subseteq[r]}|I|^r
 = \frac{\xi}{r!}\sum_{k=1}^r\binom rk k^r
 = b_r\xi.
 \qedhere
\]
\end{proof}

We now apply Lemma~\ref{lem:polarization} to averaged rooted
densities. For \(v\in V(H)\), let \(t_v(H,W;x)\) denote the
integral in \eqref{eq:weightedcount} with all vertex weights
equal to one, with \(x_v=x\) fixed, and with all remaining
variables integrated out. For distinct \(v,w\in V(H)\),
define \(t_{v,w}(H,W;x,y)\) similarly by fixing
\(x_v=x\) and \(x_w=y\). Set
\begin{align*}
 R_H(x)
 &=\frac1r\sum_{v\in V(H)}t_v(H,W;x)-p^m,\\
 M_H(x,y)
 &=\frac1{r(r-1)}
   \sum_{\substack{v,w\in V(H)\\v\ne w}}
   t_{v,w}(H,W;x,y)-p^m.
\end{align*}
The second sum is over ordered pairs of distinct vertices.
These averages arise by allowing one or two arguments of
\(B_W\) to vary while setting the remaining arguments equal
to one.

\begin{corollary}\label{cor:rooted}
If \(\Def_{H,p}(W)\le\xi\), then
\[
 \norm{R_H}{1}\le2b_r\xi
 \qquad\text{and}\qquad
 \cut{M_H}\le b_r\xi.
\]
\end{corollary}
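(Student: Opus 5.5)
Both bounds will come from Lemma~\ref{lem:polarization}, by evaluating \(B_W\) on tuples in which all but one or two arguments equal the constant function \(\one\).

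\emph{First bound.} Fix \(0\le g\le1\) and take \(f_1=g\) and \(f_i=\one\) for \(i\ge2\).

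In the symmetrized sum over \(\sigma\in\mathfrak S_r\), the function \(g\) lands at vertex \(v=\sigma^{-1}(1)\). Each \(v\in V(H)\) occurs for exactly \((r-1)!\) permutations. The first term of \(B_W\) is therefore
\[
\frac1r\sum_v\int t_v(H,W;x)g(x)\,d\mu(x).
\]
The second term is \(p^m\int g\). Hence
\[
B_W(g,\one,\ldots,\one)=\int R_H\,g\,d\mu,
\]
and Lemma~\ref{lem:polarization} gives \(\bigl|\int R_H g\bigr|\le b_r\xi\) for every \(0\le g\le1\).

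Take \(g=\one_{\{R_H>0\}}\) and then \(g=\one_{\{R_H<0\}}\). Adding the two bounds gives \(\norm{R_H}{1}\le 2b_r\xi\).

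\emph{Second bound.} Fix \(A,B\subseteq\Omega\) and take \(f_1=\one_A\), \(f_2=\one_B\), \(f_i=\one\) for \(i\ge3\). This needs \(r\ge2\), which holds since \(H\) has an edge.

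Now \(\one_A\) lands at \(v=\sigma^{-1}(1)\) and \(\one_B\) at \(w=\sigma^{-1}(2\)), with \(v\ne w\). Each ordered pair \((v,w)\) of distinct vertices occurs for \((r-2)!\) permutations, and \((r-2)!/r!=1/(r(r-1))\). So the first term becomes
\[
\int_{A\times B}\frac1{r(r-1)}\sum_{v\ne w}t_{v,w}(H,W;x,y)\,d\mu(x)\,d\mu(y).
\]
The second term is \(p^m\mu(A)\mu(B)\). Hence
\[
B_W(\one_A,\one_B,\one,\ldots,\one)=\int_{A\times B}M_H\,d\mu^2.
\]
Lemma~\ref{lem:polarization} bounds this in absolute value by \(b_r\xi\). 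Taking the maximum over \(A,B\) gives \(\cut{M_H}\le b_r\xi\).

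\emph{Where care is needed.} The only delicate point is the exact bookkeeping of the permutation multiplicities, so that the normalizations \(1/r\) and \(1/(r(r-1))\) match the definitions of \(R_H\) and \(M_H\). The rest is direct from Lemma~\ref{lem:polarization}.
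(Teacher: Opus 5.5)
Your proposal is correct and follows the paper's proof essentially step for step. You evaluate \(B_W\) with one or two distinguished \([0,1]\)-valued arguments and the rest equal to \(\one\), and use the \((r-1)!\) and \((r-2)!\) permutation multiplicities to identify these values with \(\int R_H g\,d\mu\) and \(\int_{A\times B}M_H\,d\mu^2\). You then apply Lemma~\ref{lem:polarization}, with the indicators of \(\{R_H>0\}\) and \(\{R_H<0\}\) for the \(L^1\) bound and arbitrary \(A,B\) for the cut norm.
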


\begin{proof}
In the symmetrization defining \(B_W\), a distinguished
weight occupies each vertex in exactly \((r-1)!\) permutations.
Similarly, two distinguished weights occupy each ordered
pair of distinct vertices in exactly \((r-2)!\) permutations.
Consequently,
\[
 \begin{aligned}
 B_W(f,\one,\ldots,\one)
 &=\int_\Omega f(x)R_H(x)\,d\mu(x),\\
 B_W(f,g,\one,\ldots,\one)
 &=\int_{\Omega^2}f(x)g(y)M_H(x,y)\,d\mu(x)d\mu(y).
 \end{aligned}
\]
Here the second expression has no remaining constant
arguments when \(r=2\).

Since \(\Def_{H,p}(W)\le\xi\), Lemma~\ref{lem:polarization}
bounds the absolute value of each expression by \(b_r\xi\)
whenever \(0\le f,g\le1\).
Taking \(f=\one_{\{R_H>0\}}\) and
\(f=\one_{\{R_H<0\}}\) in the first identity gives
\(\int (R_H)_+\,d\mu\le b_r\xi\) and
\(\int (R_H)_-\,d\mu\le b_r\xi\), respectively, where
\((R_H)_+=\max\{R_H,0\}\) and
\((R_H)_-=\max\{-R_H,0\}\).
Adding these inequalities yields
\(\norm{R_H}{1}\le2b_r\xi\).

Finally, taking \(f=\one_A\) and \(g=\one_B\) in the second
identity gives
\(\bigl|\int_{A\times B}M_H\,d\mu^2\bigr|\le b_r\xi\)
for every \(A,B\subseteq\Omega\).
Maximizing over \(A\) and \(B\) proves
\(\cut{M_H}\le b_r\xi\).
\end{proof}

Both averages are taken over all vertices of \(H\), including
isolated vertices. The arguments below use only these averaged
bounds; no separate bound for an individual root or an
individual ordered pair of roots is required.

\subsection{Signed-kernel estimates}\label{sec:kernels}

We next collect the operator estimates used to control
signed-kernel densities. Throughout this subsection, let
\(U\colon\Omega^2\to[-1,1]\) be symmetric, and write
\(T=T_U\) and \(d=\cut{U}\).
Products of kernels always denote composition:
\((KL)(x,y)=\int_\Omega K(x,z)L(z,y)\,d\mu(z)\).
In particular, \(U^j\) is the kernel of \(T^j\).
Powers of one-variable functions, such as \(a^k\), are
pointwise. For a kernel evaluated at specified arguments,
brackets indicate a scalar power; for example,
\([U^2(x,y)]^2\) is the square of the value \(U^2(x,y)\).

All traces are operator traces on \(L^2(\mu)\).
To make the normalization explicit, in the orthonormal basis
\(\{\mu(x)^{-1/2}\one_{\{x\}}:x\in\Omega\}\), the operator
\(T\) is represented by the real symmetric matrix
\(A_{xy}=\sqrt{\mu(x)}\,U(x,y)\sqrt{\mu(y)}\).
Thus \(\tr(T^j)=\tr(A^j)\).

For an operator \(S\) and \(1\le q<\infty\), its Schatten
\(q\)-norm is
\(\norm{S}{S_q}=(\sum_i s_i(S)^q)^{1/q}\), where \(s_i(S)\)
are the singular values of \(S\).
Since \(T\) is self-adjoint,
\(\norm{T}{S_4}^4=\tr(T^4)\).
We use the standard Schatten ideal property and
Schatten H\"older inequality; see Simon~\cite{Simon}.

The estimates below are uniform over the underlying finite
probability space and, whenever vertex weights occur, over
all weights of absolute value at most one.
We call a bound \emph{superlinear} if it is
\(O(d^{1+\eta})\) for some \(\eta>0\) as \(d\to0\), with a
uniform implied constant. The term refers to the dependence
on the cut norm, rather than to the number of edge factors.

We begin with basic operator bounds and a cut-norm estimate
for composition with \(U\).

\begin{lemma}\label{lem:operators}
Let \(U\colon\Omega^2\to[-1,1]\) be symmetric, and put
\(T=T_U\) and \(d=\cut{U}\). Then
\[
 \begin{aligned}
 \norm{T}{\infty\to1}&\le4d,
 &\qquad \norm{T}{1\to\infty}&\le1,\\
 \norm{T}{1\to1}&\le1,
 &\qquad \norm{T}{\infty\to\infty}&\le1,
 \end{aligned}
\]
and
\[
 \norm{T}{2\to2}^{\,2}\le4d
 \qquad\text{and}\qquad
 \norm{T^3}{\infty\to1}\le16d^2.
\]
Moreover, \(\cut{KU}\le4\cut{K}\) for every real kernel \(K\),
without any symmetry assumption on \(K\).
\end{lemma}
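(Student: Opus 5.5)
The first four operator bounds follow from \eqref{eq:cutdual} and the pointwise bound \(|U|\le1\), so I will dispatch them first. The inequality \(\norm{T}{\infty\to1}\le4d\) is exactly the right-hand half of \eqref{eq:cutdual}. For \(\norm{T}{1\to\infty}\le1\) I use \(|Tf(x)|\le\int|U(x,y)||f(y)|\,d\mu(y)\le\norm{f}{1}\). For \(\norm{T}{\infty\to\infty}\le1\), the same pointwise estimate together with the fact that \(\mu\) is a probability measure gives \(|Tf(x)|\le\norm f\infty\). The bound \(\norm{T}{1\to1}\le1\) then follows either by integrating \(|Tf(x)|\le\int|U||f|\) in \(x\) and applying Fubini, or by symmetry of \(U\), since \(T\) is self-adjoint and the \(1\to1\) norm is the dual of the \(\infty\to\infty\) norm.

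Next I prove \(\norm{T}{2\to2}^2\le4d\) by interpolation. Since \(T\) is self-adjoint, \(\norm{T}{2\to2}^2=\norm{T^2}{2\to2}=\sup_{\norm f2\le1}\langle Tf,Tf\rangle\). A cleaner route is Riesz--Thorin between the \(1\to1\) and \(\infty\to\infty\) bounds, but that only gives \(\norm T{2\to2}\le1\). Instead I will write
\[
\norm{Tf}{2}^2=\langle T^2f,f\rangle\le \norm{T^2f}{\infty}\norm f1 ,
\]
which requires controlling \(T^2\) from \(L^1\) to \(L^\infty\). This route does not obviously give \(d\), so I switch to the spectral argument. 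Let \(\lambda\) be an eigenvalue of largest modulus with normalized eigenfunction \(\phi\), so \(\norm{\phi}{2}=1\). From \(\phi=\lambda^{-1}T\phi\) and \(\norm{T}{1\to\infty}\le1\) we get \(\norm\phi\infty\le|\lambda|^{-1}\norm\phi1\le|\lambda|^{-1}\). Then
\[
|\lambda|=|\langle T\phi,\phi\rangle|\le\norm{T}{\infty\to1}\norm\phi\infty^2\le 4d\,|\lambda|^{-2},
\]
so \(|\lambda|^3\le4d\). This yields only a cube-root bound, so a finer argument is needed. The refinement is to use the \(\infty\) bound on one side only. We have \(|\lambda|^2=\norm{T\phi}{2}^2=\langle T\phi,T\phi\rangle\). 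Writing \(g=T\phi=\lambda\phi\), we get \(\norm{g}{\infty}\le\norm{T}{1\to\infty}\norm{\phi}{1}\le1\). Hence \(|\lambda|^2=\langle Tg,\phi\rangle\lambda^{-1}\cdots\). I think the cleanest version is \(\lambda^2=\lambda\langle T\phi,\phi\rangle\cdot\)\dots, which is equivalently \(|\lambda|^2=|\langle T g,g\rangle|/|\lambda|^{0}\). Using \(\langle Tg,g\rangle=\lambda^3\) and \(\norm{g}{\infty}\le1\), we get \(|\lambda|^3\le4d\) again.

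So the eigenvector argument as I have set it up seems to lose a power, and I expect this step to be the main obstacle. What I would try first is to drop the normalization \(\norm{\phi}{2}=1\) and instead normalize \(\phi\) in \(L^\infty\), exploiting \(\norm{\phi}{2}^2\le\norm\phi\infty\norm\phi1\). Failing that, I would use the trace: \(\norm T{2\to2}^4\le\tr(T^4)=\int [U^2(x,y)]^2\), and \(\int[U^2]^2\le\norm{U^2}{\infty}\int|U^2|\le\int|U^2|\le4\norm{T}{\infty\to1}\), where the last step takes the sign pattern of \(U^2\) as a test function. This gives \(\norm{T}{2\to2}^4\le16d\). That is still weaker than the claimed bound, so the intended proof must use a sharper identity, and I would look for one there.

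Once the \(2\to2\) bound is available, the bound \(\norm{T^3}{\infty\to1}\le\norm{T}{\infty\to2}\norm{T}{2\to2}\norm{T}{2\to1}\) follows. Here \(\norm{T}{\infty\to2}=\norm{T}{2\to1}\le\norm{T}{\infty\to1}^{1/2}\), since \(\norm{Tf}{2}^2=\langle T^2f,f\rangle\le\norm{T}{\infty\to1}\norm{Tf}{\infty}\norm f\infty\). Combining gives \(4d\cdot(4d)^{1/2}\cdot\ldots\), and I would assemble the factors to reach \(16d^2\).

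Finally, for \(\cut{KU}\le4\cut K\) I would use the dual form in \eqref{eq:cutdual}: \(\int_{A\times B}KU=\langle \one_A,T_K(T\one_B)\rangle\). Since \(\norm{T\one_B}{\infty}\le1\), this is at most \(\norm{T_K}{\infty\to1}\le4\cut K\).
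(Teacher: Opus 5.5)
Your proofs of the four elementary bounds and of \(\cut{KU}\le4\cut{K}\) are correct and coincide with the paper's. However, the two remaining estimates are not established, so the proposal has genuine gaps.

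\textbf{The \(L^2\) bound.} For \(\norm{T}{2\to2}^{\,2}\le4d\), every route you try ends with something weaker, namely \(|\lambda|^3\le4d\) or \(\norm{T}{2\to2}^4\le16d\), and you leave the step open. The loss comes from normalizing the eigenfunction in \(L^2\) or \(L^\infty\) and then converting between norms, which costs a factor \(|\lambda|^{-1}\).

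The missing idea is to bound \(T^2\) on \(L^1\) by passing through \(L^\infty\) in the middle:
\(\norm{T^2}{1\to1}\le\norm{T}{\infty\to1}\norm{T}{1\to\infty}\le4d\).
If \(Tv=\lambda v\) with \(v\ne0\), then \(|\lambda|^2\norm{v}{1}=\norm{T^2v}{1}\le4d\norm{v}{1}\). The same norm of \(v\) appears on both sides and cancels, so \(|\lambda|^2\le4d\). Since \(T\) is self-adjoint on a finite-dimensional space, \(\norm{T}{2\to2}\) is its largest \(|\lambda|\).

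Equivalently, your Riesz--Thorin idea (or Schur's test) succeeds if you apply it to \(T^2\) rather than to \(T\). Both \(\norm{T^2}{1\to1}\le\norm{T}{\infty\to1}\norm{T}{1\to\infty}\) and \(\norm{T^2}{\infty\to\infty}\le\norm{T}{1\to\infty}\norm{T}{\infty\to1}\) are at most \(4d\). Hence \(\norm{T}{2\to2}^{\,2}=\norm{T^2}{2\to2}\le4d\).

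\textbf{The cubic bound.} This has a separate gap, which remains even if the \(L^2\) bound is granted. Your estimates \(\norm{T}{\infty\to2}=\norm{T}{2\to1}\le2d^{1/2}\) are correct. But factoring as \(\norm{T}{2\to1}\norm{T}{2\to2}\norm{T}{\infty\to2}\) gives only \(\norm{T^3}{\infty\to1}\le8d^{3/2}\). This does not imply \(16d^2\) when \(d<1/4\). No arrangement of these factors reaches order \(d^2\), because each step through \(L^2\) is worth only \(d^{1/2}\).

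The paper avoids \(L^2\) entirely and factors along \(L^\infty\to L^1\to L^\infty\to L^1\):
\(\norm{T^3}{\infty\to1}\le\norm{T}{\infty\to1}\norm{T}{1\to\infty}\norm{T}{\infty\to1}\le4d\cdot1\cdot4d=16d^2\).
The two outer steps each gain a full factor \(4d\), the middle step costs nothing by the trivial bound \(\norm{T}{1\to\infty}\le1\), and the \(L^2\) estimate is not used at all.
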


\begin{proof}
The bound \(\norm{T}{\infty\to1}\le4d\) follows from
\eqref{eq:cutdual}.
Since \(|U|\le1\), we have \(|Tf(x)|\le\norm{f}{1}\)
for every \(x\in\Omega\).
Together with \(\mu(\Omega)=1\), this gives the other three
bounds in the first display.

To obtain the \(L^2\) bound, first observe that
\[
 \norm{T^2}{1\to1}
 \le \norm{T}{\infty\to1}\norm{T}{1\to\infty}
 \le4d.
\]
If \(Tv=\lambda v\) with \(v\ne0\), then
\(|\lambda|^2\norm{v}{1}
=\norm{T^2v}{1}\le4d\norm{v}{1}\), so
\(|\lambda|^2\le4d\).
The space is finite-dimensional, and symmetry of \(U\)
makes \(T\) self-adjoint on \(L^2(\mu)\).
Its \(L^2\) operator norm is therefore the largest absolute
value of its eigenvalues, proving
\(\norm{T}{2\to2}^{\,2}\le4d\).

For the cubic estimate, compose the bounds along
\(L^\infty\to L^1\to L^\infty\to L^1\) to obtain
\[
 \norm{T^3}{\infty\to1}
 \le
 \norm{T}{\infty\to1}
 \norm{T}{1\to\infty}
 \norm{T}{\infty\to1}
 \le16d^2.
\]

Finally, let \(K\) be any real kernel and fix
\(A,B\subseteq\Omega\). By the definition of kernel
composition and Fubini's theorem,
\[
 \int_{A\times B}(KU)(x,y)\,d\mu(x)d\mu(y)
 =
 \int_{\Omega^2}
 \one_A(x)K(x,z)(T\one_B)(z)\,d\mu(x)d\mu(z).
\]
Since \(\norm{\one_A}{\infty}\le1\) and
\(\norm{T\one_B}{\infty}\le1\), \eqref{eq:cutdual}
bounds the absolute value of this integral by
\(4\cut{K}\).
Maximizing over \(A\) and \(B\) yields
\(\cut{KU}\le4\cut{K}\).
\end{proof}

Lemma~\ref{lem:operators}, together with
\(\norm{T}{S_2}^2=\int_{\Omega^2}|U(x,y)|^2\,d\mu^2\le1\),
gives the linear bound
\(\tr(T^4)\le\norm{T}{2\to2}^2\norm{T}{S_2}^2\le4d\).
For the counting estimates below, we need a superlinear
bound in \(d\). \rev{The following consequence of the real Grothendieck
inequality, in the form stated in~\cite[Section~1]{AN}, provides the
required improvement. We include the argument to track the
exponent and the normalization used here.}

\begin{lemma}\label{lem:fourthtrace}
Let \(K_G\) denote the real Grothendieck constant. Then
\[
 0\le\tr(T^4)\le8K_Gd^{3/2}.
\]
\end{lemma}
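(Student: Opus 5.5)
Nonnegativity is immediate: \(T\) is self-adjoint on the finite-dimensional space \(L^2(\mu)\), so \(\tr(T^4)=\sum_i\lambda_i^4\ge0\). For the upper bound, the plan is to write \(\tr(T^4)\) as a bilinear pairing of \(U\) against the kernel \(U^3\), and to bound that pairing with Grothendieck's inequality. This beats the crude estimate \(\norm{T}{2\to2}^2\norm{T}{S_2}^2\) because it pays the cut norm of \(U\) once and gains a further factor \(d^{1/2}\) from an \(L^2\) bound on \(T\).

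\textbf{Step 1 (the pairing).} By the definition of kernel composition,
\[
 \tr(T^4)=\int_{\Omega^2}U(x,y)\,U^3(y,x)\,d\mu(x)d\mu(y),
\qquad
 U^3(y,x)=\int_{\Omega^2}U(y,z)U(z,w)U(w,x)\,d\mu(z)d\mu(w).
\]
We factor \(U^3(y,x)=\langle \phi_y,\psi_x\rangle_{L^2(\mu)}\), where
\[
 \phi_y(z)=U(y,z),\qquad \psi_x(z)=(TU(\cdot,x))(z)=\int U(z,w)U(w,x)\,d\mu(w).
\]
Then \(\tr(T^4)=\int U(x,y)\langle\phi_y,\psi_x\rangle\,d\mu(x)d\mu(y)\). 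This is a bilinear form in the kernel \(U\) with vector-valued weights, which is exactly the setting of Grothendieck's inequality.

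\textbf{Step 2 (norms of the vectors).} Since \(|U|\le1\), we have \(\norm{\phi_y}{2}\le1\). By Lemma~\ref{lem:operators},
\[
 \norm{\psi_x}{2}\le\norm{T}{2\to2}\norm{U(\cdot,x)}{2}\le(4d)^{1/2}=2d^{1/2}.
\]

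\textbf{Step 3 (Grothendieck).} The real Grothendieck inequality, as stated in \cite[Section~1]{AN}, transfers to the finite probability space by absorbing the atom weights, exactly as in \eqref{eq:cutdual}. It gives
\[
 \Bigl|\int K(x,y)\langle u_x,v_y\rangle\,d\mu(x)d\mu(y)\Bigr|\le K_G\norm{T_K}{\infty\to1}\sup_x\norm{u_x}{2}\sup_y\norm{v_y}{2}.
\]
Applying this with \(K=U\), and using \(\norm{T}{\infty\to1}\le4d\) from Lemma~\ref{lem:operators}, yields
\[
 \tr(T^4)\le K_G\cdot4d\cdot1\cdot2d^{1/2}=8K_Gd^{3/2},
\]
which is the claimed bound.

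The only delicate point is normalization: the inequality must be stated for the \(\infty\to1\) norm, not the cut norm, so that the constant is \(4\) rather than a larger one. We also need the weighted (finite measure) form; this follows by rescaling the vectors by \(\sqrt{\mu}\) factors, or equivalently by approximating \(\mu\) by rational weights and duplicating atoms. Symmetry of \(U\) is used only to identify \(U(w,x)\) with the appropriate column, so that \(\norm{U(\cdot,x)}{2}\le1\) holds.
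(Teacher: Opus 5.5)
Your proof is correct and is essentially the paper's argument: the paper also writes \(\tr(T^4)\) as the pairing of \(U\) against \(U^3\), factors the cubic kernel as \(\langle u_x,Tu_y\rangle\) with \(u_x=U(x,\cdot)\), and applies Grothendieck to the matrix \(\mu(x)\mu(y)U(x,y)\), with one family of vectors in the unit ball and the other of norm at most \(\norm{T}{2\to2}\le2d^{1/2}\), to get \(K_G\cdot2d^{1/2}\cdot4d\). The differences are cosmetic. The paper normalizes by \(\rho=\norm{T}{2\to2}\) and passes from unit vectors to the unit ball by adjoining coordinates. Also, symmetry of \(U\) is really needed for \(\norm{T}{2\to2}^2\le4d\) (self-adjointness in Lemma~\ref{lem:operators}), not for \(\norm{U(\cdot,x)}{2}\le1\), which already follows from \(|U|\le1\).
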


\begin{proof}
Since \(T\) is self-adjoint,
\(\tr(T^4)=\norm{T^2}{S_2}^2\ge0\).
Put \(\rho=\norm{T}{2\to2}\).
If \(\rho=0\), then \(T=0\) and the conclusion is immediate,
so assume that \(\rho>0\).

For each \(x\in\Omega\), let \(u_x=U(x,\cdot)\), regarded
as a vector in the real Hilbert space \(L^2(\mu)\).
The assumptions \(|U|\le1\) and \(\mu(\Omega)=1\) imply
\(\norm{u_x}{2}\le1\), and hence
\(\norm{\rho^{-1}Tu_x}{2}\le1\).
By symmetry of \(U\) and the definition of kernel composition,
\(\langle u_x,Tu_y\rangle=U^3(x,y)\).
The trace formula therefore gives
\[
 \tr(T^4)
 =\int_{\Omega^2}U(x,y)U^3(y,x)\,d\mu(x)d\mu(y)
 =\int_{\Omega^2}
   U(x,y)\langle u_x,Tu_y\rangle\,d\mu(x)d\mu(y).
\]

Apply the real Grothendieck inequality~\rev{\cite[Section~1]{AN}}, in its
unit-ball form, to the matrix
\(c_{xy}=\mu(x)\mu(y)U(x,y)\) and the two families of vectors
\(u_x\) and \(\rho^{-1}Tu_y\).
The unit-ball form follows from the unit-vector form by
adjoining orthogonal coordinates without changing the
inner products between the two families.
We obtain
\[
 \tr(T^4)
 \le K_G\rho
 \max_{\varepsilon,\eta\colon\Omega\to\{-1,1\}}
 \left|\sum_{x,y\in\Omega}
 c_{xy}\varepsilon(x)\eta(y)\right|.
\]
The maximum on the right equals
\(\norm{T}{\infty\to1}\): by \(L^1\)--\(L^\infty\) duality,
this norm is the maximum of the same bilinear form over
\([-1,1]^\Omega\times[-1,1]^\Omega\), and a maximum absolute
value is attained at vertices of these cubes.

Finally, Lemma~\ref{lem:operators} gives
\(\norm{T}{\infty\to1}\le4d\) and \(\rho\le2d^{1/2}\).
Consequently,
\[
 \tr(T^4)
 \le K_G\rho\norm{T}{\infty\to1}
 \le4K_G\rho d
 \le8K_Gd^{3/2}.
 \qedhere
\]
\end{proof}

We now use these operator bounds to estimate weighted
densities of graphs containing certain induced subgraphs.
The estimates allow signed vertex weights: after fixing
the variables outside an induced subgraph, the incident
edge factors contribute such weights to its vertices.

\begin{lemma}\label{lem:patterns}
Let \(F\) be a graph, and let
\(f_v\colon\Omega\to[-1,1]\) for each \(v\in V(F)\).
If \(F\) contains an induced copy of \(P_4\) or \(2K_2\), then
\[
 |t(F,U;\mathbf f)|\le16d^2.
\]
If \(F\) contains an induced copy of \(C_4\), then
\[
 |t(F,U;\mathbf f)|\le8K_Gd^{3/2}.
\]
\end{lemma}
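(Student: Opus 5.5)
Fix an induced copy of the pattern \(P\in\{P_4,2K_2,C_4\}\) in \(F\), on vertex set \(Q=\{a,b,c,d\}\subseteq V(F)\), and condition on the variables outside \(Q\). Write \(\mathbf x=(\mathbf y,\mathbf z)\) with \(\mathbf y=(x_v)_{v\notin Q}\) and \(\mathbf z=(x_v)_{v\in Q}\). For fixed \(\mathbf y\), every edge factor joining \(v\in Q\) to a vertex outside \(Q\), together with \(f_v\), is a function of \(x_v\) alone; it is bounded by one in absolute value because \(|U|\le1\) and \(|f_v|\le1\). Call the product of these factors \(g_v^{\mathbf y}\colon\Omega\to[-1,1]\). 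The factors among the outside vertices and their weights form a scalar \(c(\mathbf y)\) with \(|c(\mathbf y)|\le1\). Since \(Q\) is induced, the only edge factors among \(\mathbf z\) are those of \(P\). Therefore
\[
 t(F,U;\mathbf f)=\int c(\mathbf y)\,t(P,U;\mathbf g^{\mathbf y})\,d\mu(\mathbf y),
\]
and it suffices to bound \(|t(P,U;\mathbf g)|\) uniformly over weights \(g_v\colon\Omega\to[-1,1]\).

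For \(P_4=a\!-\!b\!-\!c\!-\!d\), I will write
\[
 t=\int g_a(x_a)\,U(x_a,x_b)g_b(x_b)\,U(x_b,x_c)g_c(x_c)\,U(x_c,x_d)g_d(x_d)
\]
as \(\langle g_a, T D_b T D_c T g_d\rangle\), where \(D_v\) denotes multiplication by \(g_v\). I will then follow the chain of norms in Lemma~\ref{lem:operators}. First, \(Tg_d\) satisfies \(\norm{Tg_d}{1}\le\norm{T}{\infty\to1}\le4d\). Applying \(D_c\) keeps the \(L^1\) norm at most \(4d\). Applying \(T\) as a map \(L^1\to L^\infty\) gives \(L^\infty\) norm at most \(4d\). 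Applying \(D_b\) preserves this, and the final \(T\) as a map \(L^\infty\to L^1\) gives at most \(16d^2\). Pairing with \(g_a\in L^\infty\) yields \(|t|\le16d^2\).

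For \(2K_2\), the density factors as \(\langle g_a,Tg_b\rangle\langle g_c,Tg_d\rangle\). Each factor is at most \(4d\) in absolute value, which gives \(16d^2\) immediately.

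The \(C_4\) case is the main step. Here \(t=\tr(D_aTD_bTD_cTD_dT)\), and the task is to reproduce Lemma~\ref{lem:fourthtrace} with weights inserted. I will follow its proof. Put \(u_x=U(x,\cdot)\) and \(\rho=\norm{T}{2\to2}\). The trace becomes
\[
 t=\int g_a(x)g_c(y)U(x,y)\,\langle D_bu_x,\;T D_d u_y\rangle\,d\mu(x)d\mu(y).
\]
To check this, expand \(U(x,z)g_b(z)U(z,w)g_d(w)U(w,y)\), with the edge \(ya\) closing the cycle through \(U(x,y)\). The vectors \(D_bu_x\) have \(L^2\) norm at most one. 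The vectors \(\rho^{-1}TD_du_y\) also have norm at most one, since \(\norm{D_d}{2\to2}\le1\). I will apply Grothendieck with the coefficient matrix \(\mu(x)\mu(y)g_a(x)g_c(y)U(x,y)\). The resulting sign maximum is at most \(\norm{T}{\infty\to1}\le4d\), because the bounded weights \(g_a,g_c\) can be absorbed into the test functions. This gives \(|t|\le K_G\rho\cdot4d\le8K_Gd^{3/2}\).

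The delicate point is to confirm the bookkeeping: the inner-product identity must match the cycle orientation \(a\!-\!b\!-\!c\!-\!d\!-\!a\), and the bounds must hold without the positivity used in Lemma~\ref{lem:fourthtrace}. Unlike \(\tr(T^4)\), the weighted trace \(t\) can be negative. This is harmless, because the unit-ball form of Grothendieck's inequality bounds the absolute value.
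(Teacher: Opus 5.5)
Your proposal is correct. The conditioning reduction, the \(2K_2\) case and the \(P_4\) case are essentially the paper's. For \(P_4\), the paper integrates out both endpoints and bounds \(\int g_2g_3U\,(Tg_1)(Tg_4)\) by \(\norm{Tg_1}{1}\norm{Tg_4}{1}\le\norm{T}{\infty\to1}^2\). That uses the same operator bounds as your \(L^\infty\to L^1\to L^\infty\to L^1\) chain, so the difference is cosmetic.

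The \(C_4\) case is where you genuinely differ.

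\begin{itemize}
\item The paper writes \(t=\tr(D_1TD_2TD_3TD_4T)\). It removes the weights with Schatten H\"older and the ideal property \(\norm{D_iT}{S_4}\le\norm{T}{S_4}\), then applies Lemma~\ref{lem:fourthtrace} to the unweighted \(\tr(T^4)\).
\item You rerun the Grothendieck argument with the weights built in. The vectors \(D_bu_x\) and \(\rho^{-1}TD_cu_y\) lie in the unit ball of \(L^2(\mu)\). The outer weights go into the coefficient matrix \(\mu(x)\mu(y)g_a(x)g_d(y)U(x,y)\), whose sign maximum is still at most \(\norm{T}{\infty\to1}\le4d\). This gives the same constant \(8K_Gd^{3/2}\).
\end{itemize}

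Your route is more elementary: it needs only Grothendieck's inequality and Lemma~\ref{lem:operators}. The paper's route is more modular: it proves the Grothendieck step once for \(\tr(T^4)\), and reuses the same Schatten-norm device for the paw and diamond in Lemma~\ref{lem:pawdiamond}.

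Two small repairs are needed.

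\begin{itemize}
\item As written, \(\int g_a(x)g_c(y)U(x,y)\langle D_bu_x,TD_du_y\rangle\) is the weighted density of the cycle \(a\!-\!b\!-\!d\!-\!c\!-\!a\). In that cycle the opposite vertices \(a,c\) of your \(C_4\) are joined. The correct identity is \(t=\int g_a(x)g_d(y)U(x,y)\langle D_bu_x,TD_cu_y\rangle\,d\mu(x)d\mu(y)\). Since your bound is uniform over all weights, this relabelling costs nothing.
\item Treat \(\rho=0\) separately, since you divide by \(\rho\). In that case \(T=0\) and so \(t=0\).
\end{itemize}
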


\begin{proof}
We first reduce to the three stated graphs.
Let \(Q\subseteq V(F)\) induce one of them, and put
\(R=V(F)\setminus Q\).
For fixed external variables
\(\mathbf z=(z_w)_{w\in R}\in\Omega^R\), define
\[
 \begin{aligned}
 g_v^{\mathbf z}(x)
 &=f_v(x)
   \prod_{\substack{w\in R\\vw\in E(F)}}U(x,z_w)
   \qquad(v\in Q),\\
 c(\mathbf z)
 &=\prod_{w\in R}f_w(z_w)
   \prod_{vw\in E(F[R])}U(z_v,z_w).
 \end{aligned}
\]
All these factors have absolute value at most one.
Partitioning the edge factors according to their endpoints
and applying Fubini's theorem gives
\[
 t(F,U;\mathbf f)
 =
 \int_{\Omega^R}
 c(\mathbf z)\,
 t(F[Q],U;\mathbf g^{\mathbf z})\,d\mu^R(\mathbf z).
\]
The induced-subgraph assumption ensures that the internal
edge factors are exactly those of the chosen pattern.
Thus any bound for its weighted density that is uniform
over vertex weights of absolute value at most one also
bounds \(t(F,U;\mathbf f)\).

For \(P_4\), label the vertices consecutively by \(1,2,3,4\).
Integrating over the endpoints and using symmetry of \(U\),
we obtain
\[
 t(P_4,U;\mathbf g)
 =
 \int_{\Omega^2}
 g_2(x)g_3(y)U(x,y)
 (Tg_1)(x)(Tg_4)(y)\,d\mu(x)d\mu(y).
\]
Since \(|g_2|,|g_3|,|U|\le1\), Lemma~\ref{lem:operators}
gives
\[
 |t(P_4,U;\mathbf g)|
 \le\norm{Tg_1}{1}\norm{Tg_4}{1}
 \le\norm{T}{\infty\to1}^{\,2}
 \le16d^2.
\]

For \(2K_2\), the density factors into two weighted edge
integrals. Each has absolute value at most
\(\norm{T}{\infty\to1}\le4d\), since
\(\bigl|\int g_i(Tg_j)\,d\mu\bigr|
\le\norm{g_i}{\infty}\norm{Tg_j}{1}\).
Their product is therefore at most \(16d^2\).

Finally, label the vertices of \(C_4\) in cyclic order by
\(1,2,3,4\), and let \(D_i\) be multiplication by \(g_i\)
on \(L^2(\mu)\).
Then \(\norm{D_i}{2\to2}\le1\), and the trace formula gives
\(t(C_4,U;\mathbf g)=\tr(D_1TD_2TD_3TD_4T)\).
The Schatten ideal property yields
\(\norm{D_iT}{S_4}\le\norm{T}{S_4}\).
Applying Schatten H\"older to the four factors \(D_iT\),
and then using self-adjointness of \(T\) and
Lemma~\ref{lem:fourthtrace}, we conclude that
\[
 \begin{aligned}
 |t(C_4,U;\mathbf g)|
 &\le\prod_{i=1}^4\norm{D_iT}{S_4}\\
 &\le\norm{T}{S_4}^4
 =\tr(T^4)
 \le8K_Gd^{3/2}.
 \end{aligned}
 \qedhere
\]
\end{proof}

The pattern estimates above yield a uniform bound for
triangle-free graphs outside a single exceptional family.
We call a graph a \emph{star with isolated vertices} if its
nonisolated part is \(K_{1,k}\) for some \(k\ge1\).

\begin{lemma}\label{lem:nonstar}
There exists an absolute constant \(C_0>0\) such that the
following holds. Let \(F\) be a triangle-free graph with
at least one edge, and suppose that \(F\) is not a star
with isolated vertices. Then, for all vertex weights
\(f_v\colon\Omega\to[-1,1]\),
\[
 |t(F,U;\mathbf f)|\le C_0d^{3/2}.
\]
Moreover, every one-root density of \(F\) in \(U\) has
\(L^1\) norm at most \(C_0d^{3/2}\), and every two-root
density, with distinct roots, has cut norm at most
\(C_0d^{3/2}\).

If \(F\) also contains no four-cycle, each of these three
bounds improves to \(16d^2\).
\end{lemma}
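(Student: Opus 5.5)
The plan is to reduce everything to Lemma~\ref{lem:patterns}. The one-root and two-root statements will follow from the density statement by duality.

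\emph{Structural claim.} Let \(F\) be triangle-free with at least one edge, and suppose it is not a star with isolated vertices. I claim that \(F\) contains an induced \(P_4\), \(2K_2\), or \(C_4\).

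To see this, first suppose the nonisolated part of \(F\) has two components. Picking one edge from each gives an induced \(2K_2\), because distinct components have no edges between them. So we may assume the nonisolated part is connected and is not a star. A connected graph that is not a star has diameter at least \(2\), and being triangle-free it is not complete. If it has diameter at least \(3\), a shortest path on four vertices is an induced \(P_4\).

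It remains to treat diameter exactly \(2\). Since the graph is not a star, some edge \(uv\) has \(\deg u\ge2\) and \(\deg v\ge2\). Otherwise every edge would have an endpoint of degree \(1\), and a connected such graph with at least two edges is a star. Pick \(a\sim u\) with \(a\ne v\), and \(b\sim v\) with \(b\ne u\). Triangle-freeness gives \(a\not\sim v\) and \(b\not\sim u\), and also \(a\ne b\), since \(a=b\) would create the triangle \(uva\). If \(a\sim b\), then \(a,u,v,b\) induce a \(C_4\). If \(a\not\sim b\), then \(a,u,v,b\) induce a \(P_4\).

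In particular, a \(C_4\)-free \(F\) contains an induced \(P_4\) or \(2K_2\). This gives the \(16d^2\) bound. In general we get \(\max(16d^2,8K_Gd^{3/2})\le C_0d^{3/2}\), using \(d\le1\).

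\emph{Rooted densities.} For a one-root density \(t_v(F,U;x)\), take \(f=\operatorname{sgn}(t_v)\) at the root \(v\) and weight \(1\) elsewhere. Then
\[
\norm{t_v}{1}=t(F,U;\mathbf f),
\]
and this is bounded by the density statement, which allows arbitrary weights in \([-1,1]\).

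For a two-root density \(t_{v,w}(F,U;x,y)\) with \(v\ne w\), put \(\one_A\) at \(v\), \(\one_B\) at \(w\), and \(1\) elsewhere. Then
\[
\int_{A\times B}t_{v,w}\,d\mu^2=t(F,U;\mathbf f),
\]
again with weights in \([-1,1]\). Maximizing over \(A\) and \(B\) gives the cut-norm bound. Since the roots are distinct, the weights are attached to different vertices and no diagonal issue arises.

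\emph{Main obstacle.} The only nontrivial part is the structural case analysis. In particular, one must check that the degree-\(\ge2\) edge exists and that the diameter-\(\ge3\) case genuinely gives an induced \(P_4\); both steps are straightforward.
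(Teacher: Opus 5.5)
Your proof is correct and follows the paper's approach: you find an induced $P_4$, $2K_2$ or $C_4$, apply Lemma~\ref{lem:patterns} with $C_0=\max\{16,8K_G\}$ and $d\le1$, and then test against sign and indicator weights to get the rooted bounds. The only difference is the structural case analysis. The paper splits into a shortest (hence induced) cycle versus a forest with a longest path, whereas you use an edge whose two endpoints both have degree at least two; that argument works in any diameter, so your separate diameter-$\ge3$ case is unnecessary.
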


\begin{proof}
We first show that \(F\) contains an induced copy of
\(P_4\), \(2K_2\), or \(C_4\).
Suppose that \(F\) contains a cycle, and choose a shortest
one. This cycle is induced and, since \(F\) is triangle-free,
has length at least four. If its length is four, it gives
an induced \(C_4\); otherwise, four consecutive vertices
induce a \(P_4\).

Now suppose that \(F\) is a forest. If two components
contain edges, choosing one edge from each gives an
induced \(2K_2\). Otherwise, the nonisolated part of \(F\)
is a single tree. This tree is not a star, so its diameter
is at least three, and four consecutive vertices on a
longest path induce a \(P_4\).
If \(F\) contains no four-cycle, the same argument always
produces an induced \(P_4\) or \(2K_2\).

Since \(|U|\le1\) on a probability space, we have
\(0\le d\le1\), and hence \(d^2\le d^{3/2}\).
Lemma~\ref{lem:patterns} therefore gives the weighted
density bound with \(C_0=\max\{16,8K_G\}\).
When \(F\) contains no four-cycle, it gives the stronger
bound \(16d^2\).

To deduce the rooted estimates, write \(B=C_0d^{3/2}\),
or \(B=16d^2\) in the four-cycle-free case.
Fix a vertex \(v\), and let \(h(x)=t_v(F,U;x)\).
Choose \(f_v=\operatorname{sgn}(h)\), with
\(\operatorname{sgn}(0)=0\), and set all other vertex
weights equal to one. The weighted density bound gives
\[
 \norm{h}{1}
 =\int_\Omega \operatorname{sgn}(h(x))h(x)\,d\mu(x)
 =t(F,U;\mathbf f)
 \le B.
\]

For distinct vertices \(v,w\), put
\(k(x,y)=t_{v,w}(F,U;x,y)\).
Given arbitrary \(A,B'\subseteq\Omega\), take
\(f_v=\one_A\), \(f_w=\one_{B'}\), and all remaining
weights equal to one. Then
\[
 \left|\int_{A\times B'}k(x,y)\,d\mu(x)d\mu(y)\right|
 =|t(F,U;\mathbf f)|
 \le B.
\]
Maximizing over \(A\) and \(B'\) yields \(\cut{k}\le B\),
as required.
\end{proof}

To extend the argument beyond triangle-free graphs, we
need estimates for two further induced patterns.
The \emph{paw} is obtained from a triangle by adding a
pendant vertex, and the \emph{diamond} is obtained from
\(K_4\) by deleting one edge.

\begin{lemma}\label{lem:pawdiamond}
Let \(F\) be a graph, and let
\(f_v\colon\Omega\to[-1,1]\) for each \(v\in V(F)\).
If \(F\) contains an induced paw, then
\[
 |t(F,U;\mathbf f)|\le2\sqrt{8K_G}\,d^{5/4}.
\]
If \(F\) contains an induced diamond, then
\[
 |t(F,U;\mathbf f)|\le8K_Gd^{3/2}.
\]
\end{lemma}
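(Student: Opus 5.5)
The plan is to reduce to the two patterns exactly as in the proof of Lemma~\ref{lem:patterns}. Let \(Q\subseteq V(F)\) induce a paw or a diamond. We fix the variables outside \(Q\) and absorb all incident edge factors into new vertex weights \(g_v\), which still satisfy \(|g_v|\le1\). It then suffices to bound \(t(F[Q],U;\mathbf g)\) uniformly over such weights.

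\emph{Diamond.} Label the diamond so that \(c,d\) are the two vertices of degree three and \(a,b\) the two of degree two, so that \(ab\) is the missing edge. Integrating out \(x_a\) and \(x_b\), and writing \(D_v\) for multiplication by \(g_v\), gives
\[
 t=\int_{\Omega^2} g_c(x)g_d(y)U(x,y)\,\Phi_a(x,y)\Phi_b(x,y)\,d\mu(x)d\mu(y),
\]
where \(\Phi_v\) is the kernel of \(TD_vT\).
\begin{itemize}
 \item The weight \(g_cg_dU\) is bounded by one, so Cauchy--Schwarz gives \(|t|\le\norm{\Phi_a}{2}\norm{\Phi_b}{2}\).
 \item The \(L^2(\mu^2)\) norm of a kernel is the Hilbert--Schmidt norm of its operator. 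Hence
 \(\norm{\Phi_v}{2}^2=\tr(TD_vT\,TD_vT)=\tr(D_vT^2D_vT^2)\).
 \item By Schatten H\"older and the ideal property, this is at most \(\norm{D_vT^2}{S_2}^2\le\norm{T^2}{S_2}^2=\tr(T^4)\).
\end{itemize}
Lemma~\ref{lem:fourthtrace} then yields \(|t|\le\tr(T^4)\le8K_Gd^{3/2}\), which is the stated constant.

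\emph{Paw.} Label the triangle \(1,2,3\) and attach the pendant vertex \(4\) to \(3\). Integrating out \(x_4\) produces the weight \(h=g_3\,(Tg_4)\), which satisfies \(|h|\le1\) and \(\norm{h}{1}\le\norm{T}{\infty\to1}\le4d\). We get
\[
 t=\int_\Omega h(z)\,\bigl\langle g_1u_z,\;T(g_2u_z)\bigr\rangle\,d\mu(z),
 \qquad u_z=U(z,\cdot).
\]
The inner product involves two vectors of \(L^2\) norm at most one. Lemma~\ref{lem:operators} therefore bounds it by \(\norm{T}{2\to2}\le2d^{1/2}\), giving \(|t|\le8d^{3/2}\). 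In parallel, the inner product is trivially at most one, so \(|t|\le\norm{h}{1}\le4d\).

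It remains to check that \(\min\{8d^{3/2},4d\}\le2\sqrt{8K_G}\,d^{5/4}\) for \(0\le d\le1\), using only \(K_G\ge\pi/2\) (so \(2\sqrt{8K_G}\ge7\)).
\begin{itemize}
 \item For \(d\le1/4\), use \(8d^{3/2}=8d^{1/4}\,d^{5/4}\le8\cdot2^{-1/2}d^{5/4}<7d^{5/4}\).
 \item For \(d\ge1/4\), use \(4d=4d^{-1/4}\,d^{5/4}\le4\sqrt2\,d^{5/4}<7d^{5/4}\).
\end{itemize}

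The main obstacle is this last bookkeeping. The operator argument for the paw naturally gives a \(d^{3/2}\) bound with an absolute constant rather than \(K_G\). So the stated form \(2\sqrt{8K_G}\,d^{5/4}\) must be recovered by the case split above.

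The author's constant suggests an alternative route, which I would use if the constants fail to match. Apply Cauchy--Schwarz in \(z\) to get \(|t|\le\norm{h}{2}\norm{\Phi}{2}\), where \(\Phi(z)\) denotes the inner product above. Then \(\norm{h}{2}\le\norm{h}{1}^{1/2}\le2d^{1/2}\), and \(\norm{\Phi}{2}^2\) would be bounded by a fourth-trace quantity at most \(8K_Gd^{3/2}\).
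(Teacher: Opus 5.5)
Your proposal is correct. The diamond case is essentially the paper's argument: Cauchy--Schwarz on the kernels of $TD_vT$, then $\norm{TD_vT}{S_2}^2\le\tr(T^4)$ and Lemma~\ref{lem:fourthtrace}.

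For the paw you take a genuinely different route. The paper keeps $h=g_1(Tg_4)$ in $L^2$, with $\norm{h}{2}\le\norm{T}{2\to2}\le2d^{1/2}$. It writes the density as $\tr(D_hTD_2TD_3T)$ and applies Schatten H\"older with exponents $2,4,4$. The two $S_4$ factors bring in the Grothendieck bound $\tr(T^4)^{1/2}\le\sqrt{8K_G}\,d^{3/4}$, giving $d^{1/2}\cdot d^{3/4}=d^{5/4}$.

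You instead pair $\norm{h}{1}\le\norm{T}{\infty\to1}\le4d$ with a bound on the rooted triangle that is uniform in the root, $|\langle g_1u_z,T(g_2u_z)\rangle|\le\norm{T}{2\to2}\le2d^{1/2}$. This gives $|t|\le8d^{3/2}$ without using Grothendieck at all. That is stronger than what is stated. The stated form then follows at once from $\min\{8d^{3/2},4d\}\le(8d^{3/2}\cdot4d)^{1/2}=4\sqrt2\,d^{5/4}$, which needs only the trivial bound $K_G\ge1$. So the case split and the appeal to $K_G\ge\pi/2$ are unnecessary. Your fallback paragraph can also be dropped; its bound on $\norm{\Phi}{2}$ is never actually established.

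Your route buys more than a cleaner constant. The paw is the only source of the exponent $5/4$ in Lemma~\ref{lem:fivepatterns}. Your estimate therefore upgrades that lemma to a remainder of order $d^{3/2}$, and through Lemmas~\ref{lem:substitution} and~\ref{lem:squaredcorrection} it does the same for Proposition~\ref{prop:general-local}. The paper's route keeps the paw inside the same Schatten-trace framework as the $C_4$ and diamond estimates, but pays for this with the weaker exponent.
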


\begin{proof}
By the conditioning argument in the proof of
Lemma~\ref{lem:patterns}, it suffices to estimate the two
patterns themselves with arbitrary vertex weights
\(|g_i|\le1\).
Indeed, fixing the external variables absorbs all crossing
edge factors into the internal vertex weights, while the
product of the external vertex weights and external edge
factors has absolute value at most one.
A uniform bound for the internal integral therefore
survives integration over the external variables.
Write \(D_i\) for multiplication by \(g_i\) on \(L^2(\mu)\);
then \(\norm{D_i}{2\to2}\le1\).

For the paw, label the triangle \(123\) and the pendant
edge \(14\).
Integrating over vertex \(4\) replaces \(g_1\) by
\(h=g_1(Tg_4)\).
Since \(\norm{g_4}{2}\le1\), Lemma~\ref{lem:operators}
gives
\(\norm{h}{2}\le\norm{Tg_4}{2}
\le\norm{T}{2\to2}\le2d^{1/2}\).
Let \(D_h\) denote multiplication by \(h\).
The remaining weighted triangle integral is
\(t(\mathrm{paw},U;\mathbf g)=\tr(D_hTD_2TD_3T)\).
Moreover, the Hilbert--Schmidt kernel formula and
\(|U|\le1\) give
\[
 \norm{D_hT}{S_2}^2
 =\int_{\Omega^2}|h(x)|^2|U(x,y)|^2\,d\mu(x)d\mu(y)
 \le\norm{h}{2}^2
 \le4d.
\]
For \(i=2,3\), the Schatten ideal property yields
\(\norm{D_iT}{S_4}\le\norm{T}{S_4}\).
Applying Schatten H\"older with exponents \(2,4,4\),
followed by Lemma~\ref{lem:fourthtrace}, we obtain
\[
 \begin{aligned}
 |t(\mathrm{paw},U;\mathbf g)|
 &\le\norm{D_hT}{S_2}
      \norm{D_2T}{S_4}\norm{D_3T}{S_4}\\
 &\le2d^{1/2}\norm{T}{S_4}^2
 =2d^{1/2}\tr(T^4)^{1/2}
 \le2\sqrt{8K_G}\,d^{5/4}.
 \end{aligned}
\]

For the diamond, let \(12\) be the edge shared by its
two triangles, and let \(3,4\) be the nonadjacent vertices.
For \(i=3,4\), define
\(Q_i(x,y)=\int_\Omega U(x,z)g_i(z)U(z,y)\,d\mu(z)\),
the kernel of \(TD_iT\).
By symmetry of \(U\), integrating over vertices \(3\)
and \(4\) gives
\[
 t(\mathrm{diamond},U;\mathbf g)
 =
 \int_{\Omega^2}
 g_1(x)g_2(y)U(x,y)Q_3(x,y)Q_4(x,y)\,d\mu(x)d\mu(y).
\]
Since \(|g_1|,|g_2|,|U|\le1\), Cauchy--Schwarz bounds
the absolute value of this integral by
\(\norm{Q_3}{L^2(\mu^2)}\norm{Q_4}{L^2(\mu^2)}
=\norm{TD_3T}{S_2}\norm{TD_4T}{S_2}\).
For each \(i=3,4\), Schatten H\"older and the ideal property
give
\(\norm{TD_iT}{S_2}
\le\norm{T}{S_4}\norm{D_iT}{S_4}
\le\norm{T}{S_4}^2\).
Hence Lemma~\ref{lem:fourthtrace} implies
\[
 |t(\mathrm{diamond},U;\mathbf g)|
 \le\norm{T}{S_4}^4
 =\tr(T^4)
 \le8K_Gd^{3/2}.
 \qedhere
\]
\end{proof}

We call a graph a \emph{clique with isolated vertices} if
its nonisolated part is \(K_q\) for some \(q\ge2\).
The following structural observation shows that the five
patterns considered above cover every graph outside the
two families of stars and cliques with isolated vertices.
Combining their density estimates therefore gives a
uniform superlinear bound for all remaining graphs.

\begin{lemma}\label{lem:fivepatterns}
Let \(F\) be a graph with at least one edge that is neither
a star with isolated vertices nor a clique with isolated
vertices. Then \(F\) contains an induced copy of at least
one of \(P_4\), \(2K_2\), \(C_4\), the paw, or the diamond.

Consequently, there exists an absolute constant \(C_*>0\)
such that, for every such \(F\) and all vertex weights
\(f_v\colon\Omega\to[-1,1]\),
\[
 |t(F,U;\mathbf f)|\le C_*d^{5/4}.
\]
Moreover, every two-root density of \(F\), with distinct
roots, has cut norm at most \(C_*d^{5/4}\).
\end{lemma}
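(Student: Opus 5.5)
The argument has two parts: a purely graph-theoretic claim about induced subgraphs, followed by an assembly of the estimates already proved in Lemmas~\ref{lem:patterns} and~\ref{lem:pawdiamond}. For the structural claim, discard the isolated vertices of \(F\); being induced is unaffected by this. Let \(F'\) denote the nonisolated part, which is nonempty.

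\emph{Case 1: \(F'\) is disconnected.} Two of its components each contain an edge. One edge from each component gives an induced \(2K_2\).

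\emph{Case 2: \(F'\) is connected and triangle-free.} Here the argument in the proof of Lemma~\ref{lem:nonstar} applies directly. Since \(F'\) is not a star, it contains an induced \(C_4\) or \(P_4\): a shortest cycle, or four consecutive vertices on a shortest cycle or on a longest path of a tree of diameter at least three.

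\emph{Case 3: \(F'\) is connected and contains a triangle.} Choose a maximal clique \(Q\) with \(|Q|\ge3\). Since \(F'\) is not a clique, there is a vertex \(u\notin Q\). By connectivity we may take \(u\) adjacent to some vertex of \(Q\). By maximality, \(u\) is nonadjacent to some \(b\in Q\).

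If \(u\) has a neighbour \(a\in Q\) and a non-neighbour \(b\in Q\), pick any third vertex \(c\in Q\setminus\{a,b\}\). If \(u\sim c\), then \(\{a,b,c,u\}\) induces a diamond, with the triangles \(uac\) and \(abc\) sharing the edge \(ac\). If \(u\not\sim c\), then \(\{a,b,c,u\}\) induces a paw, with triangle \(abc\) and pendant edge \(ua\).

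This case analysis is the main point to check. The danger is a vertex \(u\) outside \(Q\) that is adjacent to all of \(Q\); maximality of \(Q\) excludes this. Since \(|Q|\ge3\), a third vertex \(c\) always exists. Thus every case yields one of the five patterns.

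For the density bound, apply Lemma~\ref{lem:patterns} or Lemma~\ref{lem:pawdiamond} to the pattern found. This gives a bound of \(16d^2\), \(8K_Gd^{3/2}\), or \(2\sqrt{8K_G}\,d^{5/4}\). Since \(0\le d\le1\), each of these is at most \(C_*d^{5/4}\) with \(C_*=\max\{16,8K_G,2\sqrt{8K_G}\}\).

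The two-root statement follows exactly as in the last step of Lemma~\ref{lem:nonstar}. Fix distinct roots \(v,w\) and arbitrary sets \(A,B'\subseteq\Omega\). Set \(f_v=\one_A\), \(f_w=\one_{B'}\), and all other weights equal to one. Then
\[
 \left|\int_{A\times B'} t_{v,w}(F,U;x,y)\,d\mu(x)\,d\mu(y)\right|
 =|t(F,U;\mathbf f)|\le C_*d^{5/4}.
\]
Maximizing over \(A\) and \(B'\) gives the cut-norm bound.
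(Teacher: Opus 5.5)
Your proof is correct. The analytic half, with \(C_*=\max\{16,8K_G,2\sqrt{8K_G}\}\) and the indicator-weight argument for two-root densities, is the same as the paper's. The structural half takes a different route.

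The paper argues by contraposition. If none of the five patterns occurs, the edge-containing part is a single component \(C\) with no induced \(P_4\) or \(C_4\). Wolk's universal-vertex lemma (cited via \cite[Lemma~4]{Andres}) then gives a vertex \(v\) adjacent to all of \(C\). Either \(C-v\) is independent, so \(C\) is a star, or an edge \(ab\) of \(C-v\) forces completeness: excluding the paw and diamond on \(\{v,a,b,w\}\) and the diamond on \(\{a,b,w,z\}\) shows \(C\) is a clique.

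You instead argue directly and split on whether the connected part contains a triangle. The triangle-free case reuses the shortest-cycle/longest-path argument of Lemma~\ref{lem:nonstar}. In the triangle case, a maximal clique \(Q\) with \(|Q|\ge3\), together with a vertex \(u\notin Q\) having both a neighbour and a non-neighbour in \(Q\), produces a paw or a diamond at once. All the needed checks hold: \(Q\ne V(F')\) because \(F'\) is not complete, connectivity supplies \(u\), maximality supplies \(b\), and \(|Q|\ge3\) supplies \(c\).

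What your version buys is self-containment: no external structural lemma is needed. It also shows something slightly sharper in the connected case: a connected non-complete graph containing a triangle already has an induced paw or diamond, without using \(P_4\)- or \(C_4\)-freeness. The paper's version is shorter once Wolk's lemma is granted. It also reads directly as a characterization: graphs with an edge that contain none of the five patterns are exactly stars or cliques with isolated vertices.
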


\begin{proof}
We prove the structural assertion by contraposition.
Suppose that \(F\) contains none of the five listed
induced subgraphs. Since two edge-containing components
would give an induced \(2K_2\), exactly one component
\(C\) contains an edge, and all other vertices are isolated.
We show that \(C\) is a star or a clique.

\rev{By Wolk's universal-vertex lemma, a connected graph with no
induced \(P_4\) or \(C_4\) has a universal vertex;
see~\cite[Lemma~4]{Andres}. Choose such a vertex \(v\) of \(C\).}

If \(C-v\) is independent, then \(C\) is a star.
Otherwise, choose an edge \(ab\) in \(C-v\).
For any \(w\in V(C)\setminus\{v,a,b\}\), the four vertices
\(v,a,b,w\) induce a paw if \(w\) is adjacent to neither
\(a\) nor \(b\), and a diamond if it is adjacent to exactly
one of them.
Thus every such \(w\) is adjacent to both \(a\) and \(b\).
If two vertices \(w,z\in V(C)\setminus\{v,a,b\}\) were
nonadjacent, then \(a,b,w,z\) would induce a diamond.
Hence \(C-v\) is complete, and so is \(C\).
This proves the structural assertion.

For the density estimate, set
\(C_*=\max\{16,8K_G,2\sqrt{8K_G}\}\).
By the structural assertion, one of
Lemmas~\ref{lem:patterns} and~\ref{lem:pawdiamond} applies.
Since \(0\le d\le1\), both \(d^2\) and \(d^{3/2}\) are
at most \(d^{5/4}\). Each of the five pattern estimates
therefore yields
\(\lvert t(F,U;\mathbf f)\rvert\le C_*d^{5/4}\).

Finally, fix any distinct vertices \(u,v\in V(F)\), and
write \(k(x,y)=t_{u,v}(F,U;x,y)\).
For arbitrary \(A,B\subseteq\Omega\), choose vertex weights
\(f_u=\one_A\), \(f_v=\one_B\), and \(f_w=\one\) for
\(w\notin\{u,v\}\).
The weighted estimate gives
\[
 \left|\int_{A\times B}k(x,y)\,d\mu(x)d\mu(y)\right|
 =|t(F,U;\mathbf f)|
 \le C_*d^{5/4}.
\]
Maximizing over \(A\) and \(B\) proves
\(\cut{k}\le C_*d^{5/4}\).
\end{proof}

\section{The general local estimate}\label{sec:general}

We now establish the local kernel estimate underlying
Theorem~\ref{thm:main}. It bounds the cut-norm discrepancy
by the hereditary defect together with a superlinear
remainder, which can be absorbed when the discrepancy
is sufficiently small.

\begin{proposition}\label{prop:general-local}
Let \(H\) be a fixed graph with \(m\ge1\) edges, and let
\(p\in(0,1)\). There exists a constant \(L(H,p)\) with
\(1\le L(H,p)\le c_Hp^{-6m}\), where \(c_H\) depends only
on \(H\), such that every symmetric kernel
\(W\colon\Omega^2\to[0,1]\) on a finite probability space
satisfies
\[
 \cut{W-p}\le L(H,p)
 \bigl(\Def_{H,p}(W)+\cut{W-p}^{5/4}\bigr).
\]
\end{proposition}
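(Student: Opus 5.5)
Write \(W=p+U\), with \(U\colon\Omega^2\to[-p,1-p]\subseteq[-1,1]\) symmetric, and put \(d=\cut{U}\) and \(\xi=\Def_{H,p}(W)\). The plan is to expand the averaged two-root density \(M_H\) of Corollary~\ref{cor:rooted} in powers of \(U\), identify its linear part, and show that the linear part controls \(d\) up to an error of order \(\xi+d^{5/4}\).

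\textbf{Expansion.} Expand every edge factor \(W=p+U\). Each term of \(t_{v,w}(H,W;x,y)\) is \(p^{m-|E'|}\) times a two-root density of a spanning subgraph \(F=(V(H),E')\) in \(U\). The term with \(E'=\varnothing\) cancels the constant \(p^m\). The terms with one edge give linear maps of \(U\). Averaging over ordered root pairs, these are a multiple \(\alpha U(x,y)\) with \(\alpha=p^{m-1}m/\binom r2>0\), coming from the edge joining the two roots. They also give one-variable terms \(a(x)+a(y)\) with \(a\propto\int U(x,z)\,d\mu(z)\), coming from edges incident to exactly one root, and a constant multiple of \(\int U\), coming from edges disjoint from the roots.

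The one-variable part is controlled by the same expansion applied to \(R_H\), where \(\norm{R_H}{1}\le2b_r\xi\). The linear part of \(R_H\) is \(\beta\,\int U(x,z)\,d\mu(z)+\gamma\int U\) with \(\beta=2p^{m-1}m/r>0\). Integrating \(R_H\) once more recovers \(\int U\) itself. This gives the explicit marginal inverse: a fixed linear combination of \(R_H(x)\), \(R_H(y)\), \(\int R_H\) and the higher-order terms of \(R_H\) cancels the one-variable linear terms in \(M_H\). As a result,
\[
 \alpha U = M_H - \mathcal L(R_H) - \mathcal N(U),
\]
where \(\cut{\mathcal L(R_H)}\le c_H\xi\) and \(\mathcal N(U)\) collects all nonlinear terms, with \(|E'|\ge2\), from both expansions.

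\textbf{Nonlinear terms.} Each term in \(\mathcal N(U)\) is a two-root density, or a one-root density tensored with \(\one\), of a graph \(F\) with at least two edges. If \(F\) is neither a star nor a clique with isolated vertices, Lemma~\ref{lem:fivepatterns} bounds its cut norm by \(C_*d^{5/4}\). Stars \(K_{1,k}\) with \(k\ge2\) are handled directly. Their two-root densities are products or compositions such as \(U(x,y)\cdot\int U(x,z)\,d\mu(z)\) or \(U^2(x,y)\). Using \(\norm{T}{2\to2}\le2d^{1/2}\), \(\norm{T}{\infty\to1}\le4d\) and \(\cut{KU}\le4\cut K\) from Lemma~\ref{lem:operators}, together with the Grothendieck-type bound, these should give \(O(d^{3/2})\). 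The terms \(U^2\), and stars with both roots as leaves, are the delicate cases.

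\textbf{Main obstacle: cliques, and the step I would attempt first.} The cliques \(K_q\) with \(q\ge3\), for instance a triangle through both roots, are not controlled by any pattern. Here I would use the composition trick from the overview. Rewrite the nonlinear term as a linear map \(\Phi\) applied to one selected edge factor \(U\), with the remaining factors frozen as bounded weights, and substitute the identity above for that selected \(U\). This gives \(\Phi(\alpha^{-1}(M_H-\mathcal L(R_H)))\) plus \(\Phi(\mathcal N(U))/\alpha\).

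The first piece has cut norm \(O(p^{-m}\xi)\), because \(\Phi\) is cut-norm bounded by Lemma~\ref{lem:operators}: freezing the other factors makes it a composition and multiplication by bounded weights. The second piece is a density of a graph obtained by substituting an edge of a clique by a two-rooted subgraph of \(H\). That graph is simple and is no longer a clique or star with isolated vertices. Lemma~\ref{lem:fivepatterns} then bounds it by \(d^{5/4}\) after possibly iterating once, which is Lemma~\ref{lem:squaredcorrection}.

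Collecting the terms, \(\alpha d\le c_H(p^{-m}\xi+p^{-m}d^{5/4})\). Each of the two inversions costs a factor \(p^{-m}\) from \(\alpha\), \(\beta\) and the coefficients \(p^{m-|E'|}\), so \(L(H,p)\le c_Hp^{-6m}\) is ample. The part I expect to be hardest is checking that every substituted graph avoids both exceptional families, and that \(\Phi\) is genuinely linear with cut-norm bounded by a constant.
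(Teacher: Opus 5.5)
There is a genuine gap in how you handle the star terms. You propose to bound them directly by $O(d^{3/2})$ using only the operator bounds of Lemma~\ref{lem:operators}. No such uniform bound exists.

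\emph{The term $U^2$.} This is the two-star with both roots at leaves. Take $\Omega=X\sqcup Y$ with $\mu(Y)=\epsilon$. Let $\sigma,\tau$ be mean-zero $\pm1$-valued functions supported on $X$ and $Y$, and put $U=c(\sigma\otimes\tau+\tau\otimes\sigma)$ with $c=\min\{p,1-p\}$. Then $a=T\one\equiv0$ and $d=c\epsilon(1-\epsilon)/2$. However, $U^2=c^2(\epsilon\,\sigma\otimes\sigma+(1-\epsilon)\,\tau\otimes\tau)$ has cut norm at least $c(1-\epsilon)d/2$. So $\cut{U^2}$ is of order $d$ even when the degree deviation vanishes.

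\emph{Other star terms.} Stars with a root at the centre, stars with at least three edges, and the one-root terms $a^k\otimes\one$ that your marginal inversion places in $\mathcal N(U)$ are controlled only by $A_2=\norm{a}{2}^2$ or by $\sqrt{A_2d}$. For $U=c(\one_X\otimes\one_Y+\one_Y\otimes\one_X)$, the kernel $U(x,y)a(x)$, which you list explicitly, already has cut norm at least $cd/2$.

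Hence your final inequality $\alpha d\le c_H(p^{-m}\xi+p^{-m}d^{5/4})$ does not follow. The star terms must be controlled through the hereditary hypothesis, not by operator bounds.

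Two ingredients are missing.

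\emph{Degree-variance estimate.} You need $A_2\le C_Hp^{-2(m-1)}(\xi+d^2)$. The paper gets this by testing $R_H$ against $a$, rather than only cancelling its linear part. It writes $rR_H=\Phi_H(a)+\mathcal Q$, where $\Phi_H$ collects the terms whose edges all meet the root. It then uses the coercivity $t\Phi_H(t)\ge r_+p^{m-1}t^2$ for $t\ge-p$. The remainder satisfies $\norm{\mathcal Q}{2}\le C_Hd$: squaring a non-star rooted term gives two copies glued at the root, which contain an induced $2K_2$.

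\emph{All nonlinear terms through the correction.} Every term with $|J|\ge2$, stars included, must go through the selected-edge correction $\mathcal A_U=\gamma^{-1}\mathcal P_r\mathcal B_U$ and the exact identity $U=h-\mathcal A_Uh+\mathcal A_U^2U$. In particular, $U^2$ is never bounded on its own. The composed graphs are never cliques with isolated vertices, but they can be stars with at least three edges. For example, replace an edge of a star by a two-star rooted at its centre and a leaf. Their two-root densities are bounded by $A_2+2\sqrt{A_2d}$. The degree estimate and Young's inequality then absorb these terms, and this step produces the factor $p^{-6(m-1)}$.

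Your one-step substitution for the clique terms is sound. Replacing an edge of $K_q$, $q\ge3$, by a rooted graph with at least two edges yields neither a clique nor a star with isolated vertices. But cliques are not the only obstruction, and the star terms are where the argument genuinely needs this extra input.
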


Throughout this section, write \(r=v(H)\), \(m=e(H)\),
\(U=W-p\), and \(T=T_U\). We abbreviate the discrepancy
and hereditary defect by \(d=\cut{U}\) and
\(\xi=\Def_{H,p}(W)\), respectively.
We also set \(a=T\one\) and \(A_2=\int_\Omega a^2\,d\mu\),
so that \(a\) is the degree deviation and \(A_2\) measures
its squared \(L^2\) norm.

For each \(J\subseteq E(H)\), let \(F_J=(V(H),J)\) be
the spanning subgraph with edge set \(J\); in particular,
all isolated vertices are retained.
Constants denoted by \(C_H\) depend only on \(H\) and may
vary from one occurrence to the next.

\subsection{Degree variance}

We first use the averaged one-root density to control
\(A_2\), the squared \(L^2\) norm of the degree deviation.
This estimate will later bound the contributions of
large stars arising from edge substitution.

\begin{lemma}\label{lem:generaldegree}
With the notation above, there exists a constant \(C_H>0\),
depending only on \(H\), such that
\[
 A_2\le C_Hp^{-2(m-1)}(\xi+d^2).
\]
\end{lemma}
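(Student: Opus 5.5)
The plan is to test the averaged one-root deviation \(R_H\) against the degree deviation \(a=T\one\). Since \(\lvert a\rvert\le1\), Corollary~\ref{cor:rooted} gives
\[
 \Bigl|\int_\Omega a\,R_H\,d\mu\Bigr|\le\norm{R_H}{1}\le2b_r\xi .
\]
The goal is then to show that \(\int aR_H\,d\mu\) is at least \(c_Hp^{m-1}A_2\), up to an error of order \(d\,A_2^{1/2}\). Absorbing that error will give the lemma with the exponent \(p^{-2(m-1)}\).

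\emph{Step 1: split \(R_H\).} Write \(W=p+U\) and expand each rooted density \(t_v(H,W;x)\) over subsets \(J\subseteq E(H)\), which gives \(\sum_J p^{m-|J|}t_v(F_J,U;x)\). For a fixed root \(v\), separate the subsets \(J\) consisting only of edges incident to \(v\). For such \(J\), every edge \(vw\in J\) integrates to \(a(x)\), and the remaining vertices are free. Summing these terms over all subsets of the star at \(v\) gives exactly
\[
 p^{m-\deg v}\bigl(p+a(x)\bigr)^{\deg v}.
\]
The term \(J=\varnothing\) contributes \(p^m\), which cancels in \(R_H\). Hence
\[
 R_H(x)=\varphi(x)+\rho(x),\qquad
 \varphi=\frac1r\sum_{v}p^{m-\deg v}\bigl((p+a)^{\deg v}-p^{\deg v}\bigr).
\]
Here \(\rho\) is \(1/r\) times the sum, over roots \(v\) and over sets \(J\) containing at least one edge not incident to \(v\), of \(p^{m-|J|}t_v(F_J,U;x)\).

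\emph{Step 2: lower bound for the main term.} Put \(y=p+a(x)\in[0,1]\), which is the normalized degree. For \(k\ge1\),
\[
 (y-p)(y^k-p^k)=(y-p)^2\sum_{i=0}^{k-1}y^ip^{k-1-i}\ge p^{k-1}(y-p)^2 .
\]
Isolated vertices contribute zero to \(\varphi\). Summing over \(v\) therefore gives, pointwise,
\[
 a\varphi\ge\frac1r\sum_v\deg(v)\,p^{m-1}a^2=\frac{2m}{r}\,p^{m-1}a^2 .
\]
Integrating yields \(\int a\varphi\,d\mu\ge\frac{2m}{r}p^{m-1}A_2\).

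\emph{Step 3: upper bound for the remainder.} Each term of \(\int a\rho\,d\mu\) has the form
\[
 \int_\Omega a(x)\,t_v(F_J,U;x)\,d\mu(x),
\]
where \(J\) contains an edge \(e=uw\) with \(v\notin\{u,w\}\). Fix all variables except \(x_u\) and \(x_w\), including \(x_v\); the weight \(a(x_v)\) is then a fixed factor. The remaining edge factors at \(u\) and \(w\) become weights bounded by \(1\) in absolute value. The inner integral is \(\int g_u\,(Tg_w)\,d\mu\), which has absolute value at most \(\norm{T}{\infty\to1}\le4d\) by Lemma~\ref{lem:operators}. All other factors are bounded by \(1\), so the term is at most
\[
 4d\int_\Omega|a|\,d\mu\le4d\,A_2^{1/2}.
\]
In particular, the single-edge terms with an edge away from \(v\) (the constants \(\int a\,d\mu\)) are handled the same way. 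Since \(p\le1\), there are at most \(2^m\) subsets and \(r\) roots, so \(\bigl|\int a\rho\,d\mu\bigr|\le C_Hd\,A_2^{1/2}\).

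\emph{Step 4: combine.} Steps 1--3 give
\[
 \frac{2m}{r}p^{m-1}A_2\le2b_r\xi+C_Hd\,A_2^{1/2}.
\]
Apply \(C_Hd\,A_2^{1/2}\le\frac{m}{r}p^{m-1}A_2+C_H'p^{-(m-1)}d^2\) and absorb the first term on the left. This gives
\[
 A_2\le C_H\bigl(p^{-(m-1)}\xi+p^{-2(m-1)}d^2\bigr)\le C_Hp^{-2(m-1)}(\xi+d^2).
\]

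I expect Step 1 to be the main obstacle. The star terms \(\int a^{k+1}\,d\mu\) with \(k\ge2\) are not small individually. The point is to regroup them exactly into the monotone function \(\varphi\) of the degree \(p+a\in[0,1]\), rather than estimate them one by one. The remaining difficulty is bookkeeping: checking that every other subset \(J\) really does contain an edge avoiding the root, so that Step 3 applies.
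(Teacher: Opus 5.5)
Your argument is correct once one inequality in Step~2 is repaired. Step~3 takes a different and more economical route than the paper.

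\emph{The repair.} When \(y<p\), the only term of \(\sum_{i=0}^{k-1}y^ip^{k-1-i}\) you can keep as a lower bound is the \(i=0\) term. So each non-isolated root contributes \(p^{m-1}a^2\), not \(\deg(v)\,p^{m-1}a^2\). The claimed pointwise bound \(a\varphi\ge\frac{2m}{r}p^{m-1}a^2\) is false in general. For the path with two edges, \(r\varphi=4pa+a^2\), so \(r\,a\varphi=4pa^2+a^3\), which is strictly smaller than \(2m\,p^{m-1}a^2=4pa^2\) wherever \(a<0\). The correct bound is \(a\varphi\ge\frac{r_+}{r}p^{m-1}a^2\), where \(r_+\ge2\) is the number of non-isolated vertices of \(H\); this is exactly \eqref{eq:coercivity}. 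Step~4 only needs some positive constant depending on \(H\), so it goes through with \(r_+\) in place of \(2m\).

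\emph{The comparison.} With that fix, your Steps~1, 2 and~4 are the paper's argument: the same split \(rR_H=\Phi_H(a)+\mathcal Q\), the same coercivity, and the same absorption. The difference is Step~3. The paper controls the remainder in \(L^2\). It writes \(\norm{t_v(F_J,U;\cdot)}{2}^2\) as the density of two copies of \(F_J\) glued at \(v\), finds an induced \(2K_2\), and uses Lemma~\ref{lem:patterns} to get \(\norm{\mathcal Q}{2}\le C_Hd\). It then pairs with \(a\) by Cauchy--Schwarz.

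You instead keep \(a\) as a weight at the root and condition on everything except the endpoints of an edge avoiding \(v\). This uses only simplicity of \(F_J\) and gives \(4d\norm{a}{1}\) per term, with no doubling. It is more elementary, and it gives more than you use. Since \(\norm{a}{1}=\norm{T\one}{1}\le4d\) by Lemma~\ref{lem:operators}, your Step~3 already yields \(\bigl|\int a\rho\,d\mu\bigr|\le C_Hd^2\). Young's inequality then becomes unnecessary, and you obtain \(A_2\le C_Hp^{-(m-1)}(\xi+d^2)\), improving the stated \(p^{-2(m-1)}\) to \(p^{-(m-1)}\). Compare \eqref{eq:secondmoment} in the triangle-free section, where the smallness of \(\norm{a}{1}\) is exploited in the same way. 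The paper's route bounds the remainder function itself, independently of the test function, but that extra information is not used elsewhere.
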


\begin{proof}
Since \(a(x)=\int_\Omega W(x,y)\,d\mu(y)-p\), we have
\(-p\le a(x)\le1-p\), and in particular
\(\norm{a}{\infty}\le1\).

For \(v\in V(H)\), let \(E_H(v)\) be the set of edges
incident with \(v\).
Expand each rooted density in \(R_H\) using \(W=p+U\).
The empty edge set contributes \(p^m\), which cancels
the constant term in \(R_H\).

If \(\varnothing\ne J\subseteq E_H(v)\), then
\(t_v(F_J,U;x)=a(x)^{|J|}\): each leaf variable contributes
a factor \(a(x)\), and every isolated vertex contributes
one.
Summing these terms over all roots gives
\[
 \Phi_H(t)=
 \sum_{\substack{v\in V(H)\\\deg_H(v)>0}}
 p^{m-\deg_H(v)}
 \bigl((p+t)^{\deg_H(v)}-p^{\deg_H(v)}\bigr).
\]
Thus the full expansion is
\begin{equation}\label{eq:generalrootidentity}
 rR_H=\Phi_H(a)+\mathcal Q,
\end{equation}
where
\[
 \mathcal Q=
 \sum_{v\in V(H)}
 \sum_{\substack{\varnothing\ne J\subseteq E(H)\\
                  J\nsubseteq E_H(v)}}
 p^{m-|J|}t_v(F_J,U;\cdot).
\]

Let \(r_+\) be the number of nonisolated vertices of \(H\).
For every integer \(k\ge1\) and every \(t\ge-p\),
\[
 t\bigl((p+t)^k-p^k\bigr)
 =t^2\sum_{j=0}^{k-1}(p+t)^{k-1-j}p^j
 \ge p^{k-1}t^2,
\]
because all terms in the sum are nonnegative and its last
term is \(p^{k-1}\).
Applying this inequality to each summand of \(\Phi_H\)
yields
\begin{equation}\label{eq:coercivity}
 t\Phi_H(t)\ge r_+p^{m-1}t^2
 \qquad(t\ge-p).
\end{equation}
Since \(a\ge-p\), integration gives
\begin{equation}\label{eq:generalcoercive}
 \int_\Omega a\Phi_H(a)\,d\mu
 \ge r_+p^{m-1}A_2.
\end{equation}

We next bound \(\mathcal Q\) in \(L^2(\mu)\).
Consider a rooted density \(q=t_v(F_J,U;\cdot)\) occurring
in \(\mathcal Q\).
The condition \(J\nsubseteq E_H(v)\) means that
\(F_J-v\) contains an edge.
Form a graph \(D\) from two disjoint copies of \(F_J\)
by identifying their copies of \(v\), with no other
identifications.
This graph is simple and contains an induced \(2K_2\):
choose an edge avoiding the root in each copy.
Their four endpoints are distinct, and there are no edges
between the two copies away from the common root.

By Fubini's theorem, with all vertex weights equal to one,
\[
 \norm{q}{2}^2
 =\int_\Omega q(x)^2\,d\mu(x)
 =t(D,U;\one)
 \le16d^2,
\]
where the last inequality follows from
Lemma~\ref{lem:patterns}.
Hence \(\norm{q}{2}\le4d\).
The number of terms in \(\mathcal Q\) depends only on \(H\),
and every coefficient satisfies \(p^{m-|J|}\le1\).
The triangle inequality therefore gives
\(\norm{\mathcal Q}{2}\le C_Hd\).

Multiplying \eqref{eq:generalrootidentity} by \(a\),
integrating, and using \eqref{eq:generalcoercive}, we obtain
\[
 \begin{aligned}
 r_+p^{m-1}A_2
 &\le r\left|\int_\Omega aR_H\,d\mu\right|
      +\left|\int_\Omega a\mathcal Q\,d\mu\right|\\
 &\le r\norm{a}{\infty}\norm{R_H}{1}
      +\norm{a}{2}\norm{\mathcal Q}{2}\\
 &\le2rb_r\xi+C_Hd\sqrt{A_2}.
 \end{aligned}
\]
Here we used Corollary~\ref{cor:rooted} and
Cauchy--Schwarz.

Since \(H\) has an edge, \(r_+\ge2\).
Young's inequality gives
\[
 C_Hd\sqrt{A_2}
 \le\frac12r_+p^{m-1}A_2
     +C_Hp^{-(m-1)}d^2.
\]
Absorbing the first term and dividing by \(p^{m-1}\),
with factors depending only on \(H\) incorporated into
\(C_H\), yields
\[
 A_2
 \le C_H\bigl(p^{-(m-1)}\xi+p^{-2(m-1)}d^2\bigr)
 \le C_Hp^{-2(m-1)}(\xi+d^2),
\]
where the final inequality uses \(0<p<1\) and \(m\ge1\).
\end{proof}

\subsection{A selected edge and the marginal maps}

We next express rooted densities as linear maps in a
selected edge factor, keeping all other edge factors fixed.
Let \(F\) be a simple graph with distinct roots
\(b,c\in V(F)\), and let \(e=(i,j)\) be an oriented edge
of \(F\). Write \(\underline e=\{i,j\}\) for its underlying
undirected edge. With \(x_b=x\) and \(x_c=y\) fixed, define
\[
 (\mathcal B_{F;b,c;e}^U X)(x,y)
 =
 \int_{\Omega^{V(F)\setminus\{b,c\}}}
 X(x_i,x_j)
 \prod_{uv\in E(F)\setminus\{\underline e\}}U(x_u,x_v)
 \,d\mu^{V(F)\setminus\{b,c\}}.
\]
This map is linear in \(X\), while every other edge factor
remains equal to \(U\). In particular,
\(\mathcal B_{F;b,c;e}^U U=t_{b,c}(F,U;\cdot,\cdot)\).
The representation is exact: it involves neither
differentiation nor truncation of an expansion.
The orientation of \(e\) specifies the order of the
arguments of \(X\), which need not be symmetric.

To handle the one-variable and constant terms in rooted
expansions, we also introduce the marginal maps
\[
 \begin{aligned}
 (\mathcal R_xK)(x,y)&=\int_\Omega K(x,z)\,d\mu(z),\\
 (\mathcal R_yK)(x,y)&=\int_\Omega K(z,y)\,d\mu(z),
 \end{aligned}
\]
and the averaging map
\((\mathcal JK)(x,y)
=\int_{\Omega^2}K(z,w)\,d\mu(z)d\mu(w)\).
Thus \(\mathcal R_x\) retains the first variable,
\(\mathcal R_y\) retains the second, and \(\mathcal J\)
produces a constant kernel.

The following bounds control these maps in the cut norm.
Their interpretation in terms of rooted graphs will also
be used in the edge-substitution argument.

\begin{lemma}\label{lem:mapbounds}
For every real kernel \(X\),
\[
 \cut{\mathcal B_{F;b,c;e}^U X}\le4\cut{X}.
\]
Each of \(\mathcal R_x,\mathcal R_y,\mathcal J\) has
operator norm at most one with respect to the cut norm.

Moreover, applying any of these three maps to a two-root
density yields another two-root density, represented by
a simple graph obtained by retaining all original vertices
and edges and adding isolated vertices as new roots.
\end{lemma}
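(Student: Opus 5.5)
The cut-norm bound for \(\mathcal B_{F;b,c;e}^U X\) follows the conditioning argument of Lemma~\ref{lem:patterns}. Fix \(A,B\subseteq\Omega\) and put weights \(\one_A\) at \(b\) and \(\one_B\) at \(c\). Then \(\int_{A\times B}\mathcal B^U_{F;b,c;e}X\) is an integral over \(\Omega^{V(F)}\) whose integrand contains the single factor \(X(x_i,x_j)\). Every other factor is a product of edge factors \(U\) and indicator weights, so it has absolute value at most one.

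We now condition on all variables other than \(x_i\) and \(x_j\). The remaining factors split into three groups: factors depending only on \(x_i\) (edges from \(i\) to fixed vertices, together with the weight at \(i\) if \(i\in\{b,c\}\)), factors depending only on \(x_j\), and a constant factor. Here we use that \(F\) is simple, so \(\underline e\) is the only edge joining \(i\) and \(j\), and that \(i\ne j\). The conditional integral therefore has the form \(\int g(x_i)X(x_i,x_j)h(x_j)\,d\mu^2\) with \(\norm{g}{\infty},\norm{h}{\infty}\le1\). By \eqref{eq:cutdual} it is bounded in absolute value by \(4\cut{X}\). Integrating over the fixed variables with a coefficient of modulus at most one preserves this bound, and maximizing over \(A,B\) gives \(\cut{\mathcal B X}\le4\cut{X}\).

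The roots \(b,c\) may coincide with \(i\) or \(j\). This is harmless, because the root indicators are simply absorbed into \(g\) or \(h\).

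For the marginal maps, the bounds are checked directly:
\[
\int_{A\times B}\mathcal R_xK=\mu(B)\int_{A\times\Omega}K,
\]
whose absolute value is at most \(\cut{K}\) since \(\mu(B)\le1\). The same computation handles \(\mathcal R_y\). For \(\mathcal J\), we have \(\int_{A\times B}\mathcal JK=\mu(A)\mu(B)\int_{\Omega^2}K\), which is again at most \(\cut{K}\) in absolute value.

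For the structural assertion, let \(K=t_{b,c}(F,U;\cdot,\cdot)\). Then \(\mathcal R_xK(x,y)\) integrates out \(x_c\) and keeps \(x_b=x\). It therefore equals the two-root density of the graph \(F'\) obtained from \(F\) by adding one isolated vertex \(c'\), with roots \(b\) and \(c'\): the new isolated root carries the variable \(y\) and contributes a factor of one. The map \(\mathcal R_y\) is treated symmetrically. For \(\mathcal J\), we add two isolated vertices \(b',c'\) as the new roots. In each case all original vertices and edges are retained, no edges are added, and the graph stays simple; Fubini's theorem verifies the identity.

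The only delicate point is the first bound, specifically the requirement that after conditioning \(X\) appears exactly once and is sandwiched between bounded one-variable weights. This is where simplicity of \(F\) and distinctness of \(i,j\) are used; the rest of the argument is routine.
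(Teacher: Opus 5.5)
Your proof is correct and follows the paper's argument essentially step for step. Like the paper, you condition on all variables except \(x_i,x_j\) and use simplicity of \(F\) to reduce to a bilinear form \(\int g\,X\,h\) bounded via \eqref{eq:cutdual}; you compute \(\mathcal R_x,\mathcal R_y,\mathcal J\) directly by Fubini; and for the structural claim you represent the results by adding the isolated roots \(c'\), \(b'\), or \(b',c'\), exactly as in the paper's table.
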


\begin{proof}
Write \(e=(i,j)\), and fix \(A,B\subseteq\Omega\).
By the definition of the selected-edge map and Fubini's
theorem,
\[
 \begin{aligned}
 &\int_{A\times B}
   (\mathcal B_{F;b,c;e}^U X)(x,y)\,d\mu(x)d\mu(y)\\
 &\qquad=
 \int_{\Omega^{V(F)}}
 \one_A(x_b)\one_B(x_c)X(x_i,x_j)
 \prod_{uv\in E(F)\setminus\{\underline e\}}U(x_u,x_v)
 \,d\mu^{V(F)}(\mathbf x).
 \end{aligned}
\]
Fix all variables except \(x_i,x_j\).
Because \(F\) is simple and the edge \(\underline e\)
has been removed from the product, every remaining
factor depends on at most one of these two variables.
The integrand therefore has the form
\(c\,f(x_i)X(x_i,x_j)g(x_j)\), where
\(|c|\le1\), \(\norm{f}{\infty}\le1\), and
\(\norm{g}{\infty}\le1\).
If a root is an endpoint of \(e\), its indicator is
included in the corresponding function \(f\) or \(g\);
otherwise, its indicator is included in \(c\).

By \eqref{eq:cutdual}, the absolute value of the integral
over \(x_i,x_j\) is at most \(4\cut{X}\).
This bound is uniform in the fixed variables, so integrating
over them and maximizing over \(A,B\) proves the first
assertion.

For the marginal maps, let \(K\) be any real kernel.
For every \(A,B\subseteq\Omega\), Fubini's theorem gives
\[
 \begin{aligned}
 \left|\int_{A\times B}\mathcal R_xK\,d\mu^2\right|
 &=\mu(B)\left|\int_{A\times\Omega}K\,d\mu^2\right|
 \le\cut{K},\\
 \left|\int_{A\times B}\mathcal R_yK\,d\mu^2\right|
 &=\mu(A)\left|\int_{\Omega\times B}K\,d\mu^2\right|
 \le\cut{K},\\
 \left|\int_{A\times B}\mathcal JK\,d\mu^2\right|
 &=\mu(A)\mu(B)\left|\int_{\Omega^2}K\,d\mu^2\right|
 \le\cut{K}.
 \end{aligned}
\]
Taking the maximum over \(A,B\) proves that all three
maps are contractions in the cut norm.

Finally, let \(K=t_{b,c}(F,U;\cdot,\cdot)\).
Integrating out a root variable makes that vertex an
ordinary integrated vertex.
The resulting kernel can still be represented as a
two-root density by introducing a new isolated root
for each missing free variable.
The constructions are summarized below, where \(I\)
denotes the identity map and \(b',c'\) are distinct new
vertices.
\begin{center}
\begin{tabular}{@{}lll@{}}
\toprule
Map & Ordered roots & Isolated vertices added\\
\midrule
\(I\)             & \(b,c\)   & none\\
\(\mathcal R_x\)  & \(b,c'\)  & \(c'\)\\
\(\mathcal R_y\)  & \(b',c\)  & \(b'\)\\
\(\mathcal J\)    & \(b',c'\) & \(b',c'\)\\
\bottomrule
\end{tabular}
\end{center}
In each case, the old roots remain vertices of the graph,
whether or not they remain distinguished.
No original vertices are deleted or identified, and no
edges are added or removed.
The resulting graph is therefore simple and has exactly
the original edge set.
\end{proof}

Before composing the selected-edge maps, we isolate the star
case that can remain after substitution.

\begin{lemma}\label{lem:largestars}
Let \(F\) be a star with \(k\ge3\) edges together with
isolated vertices. For any two distinct roots \(b,c\),
\[
 \cut{t_{b,c}(F,U;\cdot,\cdot)}
 \le A_2+2\sqrt{A_2d}.
\]
\end{lemma}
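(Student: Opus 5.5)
The plan is a direct case analysis on where the two roots sit in the star. Let \(s\) be the centre and \(\ell_1,\dots,\ell_k\) the leaves, with \(k\ge3\). We have \(a=T\one\), \(\norm{a}{\infty}\le1\) and \(A_2=\norm{a}{2}^2\). Two facts are used throughout:
\begin{itemize}
\item For \(j\ge2\) we have \(|a|^j\le a^2\) and \(|a|^{j-1}\le|a|\) pointwise.
\item Lemma~\ref{lem:operators} gives \(\norm{T}{2\to2}\le2d^{1/2}\) and \(\norm{T\one_B}{\infty}\le1\).
\end{itemize}
Each root is either the centre, a leaf, or an isolated vertex. Cases that differ only by swapping \(b\) and \(c\) are handled identically, since transposing a kernel preserves the cut norm. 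Every non-root leaf is integrated out and contributes a factor \(a\) evaluated at the centre's variable. Every non-root isolated vertex contributes the factor \(1\).

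\emph{Cases in which no leaf is a root.} Suppose first that both roots are isolated. The kernel is then the constant \(\int a^k\,d\mu\), whose absolute value is at most \(\int a^2=A_2\). Suppose next that one root is the centre and the other is isolated. The kernel is \(a(x)^k\), viewed as a function of \((x,y)\). Its cut norm is at most \(\int|a|^k\le A_2\).

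\emph{Cases in which exactly one leaf is a root.} Suppose first that the leaf root is paired with an isolated root. The kernel is \((T(a^{k-1}))(x)\). Its cut norm is at most
\[
\norm{T a^{k-1}}{1}\le\norm{T}{2\to2}\norm{a^{k-1}}{2}\le2\sqrt{d A_2},
\]
where we used \(\int a^{2k-2}\le A_2\). Suppose next that the roots are the centre \(x\) and one leaf \(y\). The kernel is \(a(x)^{k-1}U(x,y)\). Testing it against \(\one_A\otimes\one_B\) gives \(\int \one_A a^{k-1}\,T\one_B\,d\mu\). By Cauchy--Schwarz this is at most
\[
\norm{a^{k-1}}{2}\norm{T\one_B}{2}\le\sqrt{A_2}\cdot2\sqrt d.
\]

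\emph{Case in which both roots are leaves.} This is the only step where \(k\ge3\) is essential, and it is the main point to get right. The kernel is \(\int U(x,z)a(z)^{k-2}U(z,y)\,d\mu(z)\). Testing it against \(\one_A\otimes\one_B\) gives \(\int a^{k-2}\,(T\one_A)(T\one_B)\,d\mu\). Because \(k-2\ge1\), at least one factor of \(a\) survives, so \(|a|^{k-2}\le|a|\). The integral is therefore at most
\[
\norm{a}{2}\norm{T\one_A}{2}\norm{T\one_B}{\infty}\le2\sqrt{A_2d}.
\]
For \(k=2\) this argument would fall back to the non-superlinear bound \(4d\), which is why stars with two edges are excluded.

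Taking the maximum over all cases gives the bound \(A_2+2\sqrt{A_2 d}\).
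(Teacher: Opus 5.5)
Your proof is correct and follows essentially the same route as the paper: a case analysis on the positions of the two roots, with the decisive both-roots-are-leaves case handled by the same Cauchy--Schwarz estimate \(\norm{a}{2}\norm{T\one_A}{2}\norm{T\one_B}{\infty}\le2\sqrt{A_2d}\). The differences are cosmetic. When exactly one root is a leaf, the paper uses \(\norm{T\one_B}{\infty}\le1\) to get the cruder bound \(\int|a|^{k-1}\,d\mu\le A_2\) in place of your \(L^2\) estimate. In the both-leaves case it treats \(k\ge4\) separately rather than using \(|a|^{k-2}\le|a|\) uniformly.
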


\begin{proof}
Fix \(A,B\subseteq\Omega\), and put \(g=\one_A\) and
\(h=\one_B\). We bound the integral of the two-root
density against \(g(x)h(y)\).
Recall that \(|a|\le1\) and \(\norm{a}{2}^2=A_2\).
By Lemma~\ref{lem:operators},
\(\norm{Tg}{\infty},\norm{Th}{\infty}\le1\) and
\(\norm{Tg}{2}\le2d^{1/2}\), since
\(\norm{g}{2}\le1\).
Every isolated vertex that is not a root contributes
one; an isolated root contributes either \(\int g\,d\mu\)
or \(\int h\,d\mu\), both at most one.

Suppose first that the two roots are not both leaves.
Then at least \(k-1\) leaves are not roots.
Integrating their variables produces a factor \(a^s\)
at the centre, where \(s\ge k-1\ge2\).
If a leaf is a root, integrating it against its indicator
contributes \(Tg\) or \(Th\), whose absolute value is at
most one. If the centre is a root, its indicator is also
bounded by one. Thus the absolute value of the cut
integral is at most
\(\int_\Omega |a|^s\,d\mu\le A_2\).

It remains to consider the case in which both roots are
leaves. Integrating the other \(k-2\) leaves and then
the two root variables gives the cut integral
\[
 \int_\Omega a^{k-2}(Tg)(Th)\,d\mu.
\]
If \(k\ge4\), its absolute value is at most
\(\int_\Omega |a|^{k-2}\,d\mu\le A_2\).
If \(k=3\), Cauchy--Schwarz gives
\[
 \left|\int_\Omega a(Tg)(Th)\,d\mu\right|
 \le\norm{a}{2}\norm{Tg}{2}\norm{Th}{\infty}
 \le2\sqrt{A_2d}.
\]
Every root placement therefore gives an absolute cut
integral at most \(A_2+2\sqrt{A_2d}\).
Maximizing over \(A\) and \(B\) proves the claim.
\end{proof}

The selected-edge maps admit a direct interpretation
in terms of graph substitution: inserting a two-root
density into the selected edge replaces that edge by
the rooted graph. The next lemma shows that the resulting
graph cannot be a clique with isolated vertices.
This is the key observation used to control clique
contributions after composing the correction map with itself.

\begin{lemma}[Edge substitution]\label{lem:substitution}
Let \(F_1,F_2\) be simple graphs, each with at least two
edges. Choose distinct roots \(b_1,c_1\in V(F_1)\),
an oriented edge \(e=(i,j)\) of \(F_1\), and distinct
roots \(b_2,c_2\in V(F_2)\). Then, after increasing the
absolute constant \(C_*\) if necessary,
\[
 \cut{
   \mathcal B_{F_1;b_1,c_1;e}^U
   \bigl(t_{b_2,c_2}(F_2,U;\cdot,\cdot)\bigr)
 }
 \le C_*\bigl(d^{5/4}+A_2+\sqrt{A_2d}\bigr).
\]
\end{lemma}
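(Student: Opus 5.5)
The plan is to identify the composed kernel with a single two-root density of a substituted graph and then apply Lemma~\ref{lem:fivepatterns} or Lemma~\ref{lem:largestars}. First, unfold the definitions. Substituting \(X=t_{b_2,c_2}(F_2,U;\cdot,\cdot)\) into the selected edge \(e=(i,j)\) of \(F_1\) and applying Fubini gives
\[
 \mathcal B_{F_1;b_1,c_1;e}^U X=t_{b_1,c_1}(F,U;\cdot,\cdot).
\]
Here \(F\) is obtained from \(F_1-\underline e\) and a disjoint copy of \(F_2\) by identifying \(b_2\) with \(i\) and \(c_2\) with \(j\). Since \(b_2\ne c_2\), the identified vertices remain distinct. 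I will check that \(F\) is simple. The only possible multi-edge would be an edge \(b_2c_2\) of \(F_2\) landing on \(\underline e\), but \(\underline e\) has been removed, so \(b_2c_2\) simply restores it. No other pair of vertices can carry edges from both graphs, because \(F_1-\underline e\) and \(F_2\) share only \(i\) and \(j\). The roots \(b_1,c_1\) are distinct vertices of \(F\), and \(e(F)=e(F_1)-1+e(F_2)\ge3\).

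Next, split into cases by the structure of \(F\). If \(F\) is neither a star nor a clique with isolated vertices, Lemma~\ref{lem:fivepatterns} gives cut norm at most \(C_*d^{5/4}\). If \(F\) is a star with isolated vertices, it has \(k\ge3\) edges, and Lemma~\ref{lem:largestars} gives \(A_2+2\sqrt{A_2d}\). Enlarging \(C_*\) to at least \(2\) covers both cases.

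The main obstacle is to rule out that \(F\) is a clique \(K_q\) with isolated vertices, and here the hypothesis that each \(F_\ell\) has at least two edges enters. Suppose the nonisolated part of \(F\) is \(K_q\); since \(F\) has at least three edges, \(q\ge3\). The edges of \(F\) split into \(E(F_1)\setminus\{\underline e\}\) and \(E(F_2)\). The first set is nonempty because \(F_1\) has at least two edges, and it contains no edge between two vertices of \(F_2\) other than through \(i,j\). Since \(F_2\) has at least two edges, some edge of \(F_2\) has an endpoint \(w\notin\{i,j\}\) that is nonisolated, so \(w\) lies in the clique. Similarly, some edge of \(F_1-\underline e\) has an endpoint \(z\notin\{i,j\}\) coming from \(F_1\): if all its edges lay inside \(\{i,j\}\), that edge would be \(\underline e\) itself, which was removed. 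Then \(z\) and \(w\) are both clique vertices, yet no edge of \(F\) joins a vertex of \(F_1\) outside \(\{i,j\}\) to a vertex of \(F_2\) outside \(\{i,j\}\). This contradicts completeness.

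I will double-check the subcase where the \(F_1\)-side edge is \(iz\) with \(z\) new, and the subcase where \(w\) arises from an edge \(b_2w\) of \(F_2\). Both are covered by the argument above, since \(z\) and \(w\) lie on opposite sides of the separator \(\{i,j\}\). This gives the claimed bound.
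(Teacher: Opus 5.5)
Your proposal is correct and follows essentially the same route as the paper. Both proofs build the substituted graph \(F\), check that it is simple with \(e(F)\ge3\), and rule out a clique by finding nonadjacent nonisolated vertices on opposite sides of the separator \(\{i,j\}\). Both then finish with Lemma~\ref{lem:fivepatterns} or Lemma~\ref{lem:largestars}, after enlarging \(C_*\) to at least \(2\).
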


\begin{proof}
Take disjoint copies of \(F_1\) and \(F_2\).
Delete \(\underline e=\{i,j\}\) from \(F_1\), then identify
\(b_2\) with \(i\) and \(c_2\) with \(j\).
Let \(F\) be the resulting graph, retaining \(b_1,c_1\)
as its ordered roots.

No two vertices within either copy are identified.
Thus no loops are created, and the roots \(b_1,c_1\)
remain distinct, including when they coincide with
endpoints of \(e\).
The two copies intersect only at \(i,j\), so the only
possible duplicated edge is \(ij\).
Its copy in \(F_1\) has been deleted, and hence \(F\)
is simple. In particular,
\(e(F)=e(F_1)-1+e(F_2)\ge3\).

Expanding the inserted rooted density and applying
Fubini's theorem gives the exact kernel identity
\[
 \mathcal B_{F_1;b_1,c_1;e}^U
 \bigl(t_{b_2,c_2}(F_2,U;\cdot,\cdot)\bigr)
 =
 t_{b_1,c_1}(F,U;\cdot,\cdot).
\]
Indeed, the variables at \(b_2,c_2\) become those at
\(i,j\), respectively, while all other variables of
\(F_2\) remain separate integration variables.

We next show that \(F\) is not a clique with isolated
vertices. Since \(F_1\) has at least two edges,
\(F_1-\underline e\) contains an edge.
This edge has an endpoint \(x\notin\{i,j\}\), because
\(F_1\) is simple and its edge \(ij\) has been removed.
Likewise, \(F_2\) has an edge with an endpoint
\(y\notin\{b_2,c_2\}\), since at most one of its edges
can have both endpoints in \(\{b_2,c_2\}\).
The vertices \(x,y\) remain distinct and nonisolated
in \(F\). They lie outside the intersection of the
two copies, so they are nonadjacent.
Thus the nonisolated part of \(F\) is not complete.

If \(F\) is not a star with isolated vertices,
Lemma~\ref{lem:fivepatterns} gives
\[
 \cut{t_{b_1,c_1}(F,U;\cdot,\cdot)}
 \le C_*d^{5/4}.
\]
Otherwise, \(F\) is a star with at least three edges
together with isolated vertices, and
Lemma~\ref{lem:largestars} gives
\[
 \cut{t_{b_1,c_1}(F,U;\cdot,\cdot)}
 \le A_2+2\sqrt{A_2d}.
\]
Increasing \(C_*\) to at least \(2\) covers both cases
and proves the claimed bound.
\end{proof}

\subsection{Removing marginals and composing the correction}

The single-edge contribution to the averaged two-root
density involves \(U\), its one-variable marginals, and
its overall mean. We first invert this combination
explicitly, with a bound uniform in \(r\).

\begin{lemma}[Marginal inversion]\label{lem:marginalinverse}
For an integer \(r\ge2\), define the linear maps
\[
 \begin{aligned}
 \mathcal L_rX
 &=X+(r-2)(\mathcal R_xX+\mathcal R_yX)
   +\frac{(r-2)(r-3)}2\,\mathcal JX,\\
 \mathcal P_rK
 &=K-\frac{r-2}{r-1}(\mathcal R_xK+\mathcal R_yK)
   +\frac{r-2}{r}\,\mathcal JK.
 \end{aligned}
\]
Then \(\mathcal P_r\mathcal L_r=I\), where \(I\) is the
identity map on real kernels. Moreover, for every real
kernel \(K\),
\[
 \cut{\mathcal P_rK}\le4\cut{K}.
\]
\end{lemma}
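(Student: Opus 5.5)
The identity \(\mathcal P_r\mathcal L_r=I\) is a finite computation in the algebra generated by \(\mathcal R_x,\mathcal R_y,\mathcal J\), and the cut-norm bound then follows from Lemma~\ref{lem:mapbounds} and the triangle inequality.

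\emph{Composition rules.} First I verify, by Fubini's theorem and \(\mu(\Omega)=1\), that
\[
 \mathcal R_x^2=\mathcal R_x,\qquad
 \mathcal R_y^2=\mathcal R_y,\qquad
 \mathcal R_x\mathcal R_y=\mathcal R_y\mathcal R_x=\mathcal J,
\]
and that \(\mathcal J\) absorbs everything: \(\mathcal J\mathcal R_x=\mathcal R_x\mathcal J=\mathcal J\mathcal R_y=\mathcal R_y\mathcal J=\mathcal J^2=\mathcal J\). For example, \(\mathcal R_xK\) depends only on \(x\). Applying \(\mathcal R_x\) again integrates a constant in the second variable against a probability measure, so it leaves \(\mathcal R_xK\) unchanged. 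Applying \(\mathcal R_y\) instead averages over the first variable and produces the constant \(\int K\,d\mu^2\). Writing \(S=\mathcal R_x+\mathcal R_y\), these rules give
\[
 S^2=S+2\mathcal J,\qquad S\mathcal J=\mathcal JS=2\mathcal J.
\]
Hence the span of \(I,S,\mathcal J\) is closed under composition, and the product of two of its elements is easy to read off.

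\emph{The identity.} Put \(\alpha=r-2\), \(\beta=\alpha(\alpha-1)/2\), \(\gamma=(r-2)/(r-1)\) and \(\delta=(r-2)/r\), so that \(\mathcal L_r=I+\alpha S+\beta\mathcal J\) and \(\mathcal P_r=I-\gamma S+\delta\mathcal J\). Expanding the product with the rules above gives
\[
 \mathcal P_r\mathcal L_r
 =I+\bigl(\alpha-\gamma-\gamma\alpha\bigr)S
 +\bigl(\beta-2\gamma\alpha-2\gamma\beta+\delta(1+2\alpha+\beta)\bigr)\mathcal J.
\]

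The \(S\)-coefficient is \(\alpha-\gamma(1+\alpha)=(r-2)-\frac{r-2}{r-1}(r-1)=0\).

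For the \(\mathcal J\)-coefficient, substitute \(1+\alpha=r-1\) and \(1+2\alpha+\beta=(r-1)r/2\). This last equality holds because \(2(2r-3)+(r-2)(r-3)=r^2-r\). The coefficient becomes
\[
 \beta(1-2\gamma)-2\gamma\alpha+\frac{r-2}{r}\cdot\frac{r(r-1)}2
 =\beta(1-2\gamma)-2\gamma\alpha+\frac{(r-2)(r-1)}2 .
\]
It remains to check that this vanishes. The case \(r=3\) gives \(0-1+1=0\), and \(r=2\) is trivial since every coefficient carries the factor \(r-2\). In general I will multiply through by \(2(r-1)\) and expand the resulting polynomial identity in \(r\). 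This polynomial bookkeeping is the only place where care is needed, and it is the main (mild) obstacle.

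\emph{The bound.} By Lemma~\ref{lem:mapbounds}, each of \(\mathcal R_x,\mathcal R_y,\mathcal J\) is a contraction in the cut norm. The triangle inequality then gives
\[
 \cut{\mathcal P_rK}\le\Bigl(1+\frac{2(r-2)}{r-1}+\frac{r-2}{r}\Bigr)\cut{K}\le(1+2+1)\cut{K}=4\cut{K}.
\]
Since \(\frac{r-2}{r-1}<1\) and \(\frac{r-2}{r}<1\), this bound is uniform in \(r\).
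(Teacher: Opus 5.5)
Your argument is correct, but it is organized differently from the paper's. The paper splits an arbitrary kernel as \(X=X_0+f(x)+g(y)+c\): a part with vanishing marginals, two mean-zero one-variable parts, and a constant. These are exactly the joint eigenspaces of the commuting idempotents \(\mathcal R_x,\mathcal R_y\). On them \(\mathcal L_r\) acts by the scalars \(1,r-1,r-1,r(r-1)/2\) and \(\mathcal P_r\) by their reciprocals, so the identity is read off componentwise. The only arithmetic is \(1-2\frac{r-2}{r-1}+\frac{r-2}{r}=\frac{2}{r(r-1)}\).

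You instead compute in the commutative algebra spanned by \(I\), \(S=\mathcal R_x+\mathcal R_y\) and \(\mathcal J\). Your composition rules and your expansion of \(\mathcal P_r\mathcal L_r\) are right, and the \(S\)-coefficient vanishes as you show. The \(\mathcal J\)-coefficient, which you left as deferred bookkeeping, also vanishes. Since \(1-2\frac{r-2}{r-1}=\frac{3-r}{r-1}\), it equals \(\frac{r-2}{2(r-1)}\bigl[(r-1)^2-(r-3)^2-4(r-2)\bigr]\), and \((r-1)^2-(r-3)^2=4(r-2)\).

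A quicker way to see this is to apply both sides to the constant kernel \(\one\). On \(\one\), \(S\) acts as multiplication by \(2\) and \(\mathcal J\) acts as the identity. Once the \(S\)-coefficient is known to be zero, the \(\mathcal J\)-coefficient therefore equals \(\frac{r(r-1)}{2}\bigl(1-2\frac{r-2}{r-1}+\frac{r-2}{r}\bigr)-1\). This is exactly the paper's constant-multiplier check.

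The paper's decomposition explains where \(\mathcal P_r\) comes from: invert the eigenvalue on each component. Your route verifies the given formula without constructing the decomposition, at the cost of a polynomial identity. The cut-norm bound is obtained identically in both, via Lemma~\ref{lem:mapbounds} and the triangle inequality.
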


\begin{proof}
Let \(X\) be any real kernel. Define
\(c=\int_{\Omega^2}X\,d\mu^2\),
\(f(x)=\int_\Omega X(x,y)\,d\mu(y)-c\), and
\(g(y)=\int_\Omega X(x,y)\,d\mu(x)-c\).
Then
\[
 X(x,y)=X_0(x,y)+f(x)+g(y)+c,
\]
where \(X_0=X-\mathcal R_xX-\mathcal R_yX+c\).
Both marginals of \(X_0\) vanish, while
\(\int_\Omega f\,d\mu=\int_\Omega g\,d\mu=0\).
This decomposes \(X\) into a kernel with zero marginals,
two mean-zero one-variable components, and a constant.

Each component is preserved by \(\mathcal L_r\) and
\(\mathcal P_r\). Substitution into their definitions gives
\[
 \begin{aligned}
 \mathcal L_rX
 &=X_0+(r-1)f(x)+(r-1)g(y)
   +\frac{r(r-1)}2\,c,\\
 \mathcal P_rX
 &=X_0+\frac{f(x)}{r-1}+\frac{g(y)}{r-1}
   +\frac{2}{r(r-1)}\,c.
 \end{aligned}
\]
In particular, the constant multiplier in the second
identity is
\(1-2(r-2)/(r-1)+(r-2)/r=2/[r(r-1)]\).
The multipliers in the two identities are reciprocal,
so \(\mathcal P_r\mathcal L_rX=X\).
This also includes \(r=2\), when both maps are the identity.

Finally, the triangle inequality and
Lemma~\ref{lem:mapbounds} imply
\[
 \begin{aligned}
 \cut{\mathcal P_rK}
 &\le\cut{K}
   +\frac{r-2}{r-1}
      \bigl(\cut{\mathcal R_xK}+\cut{\mathcal R_yK}\bigr)
   +\frac{r-2}{r}\cut{\mathcal JK}\\
 &\le
 \left(1+2\frac{r-2}{r-1}+\frac{r-2}{r}\right)\cut{K}
 \le4\cut{K}.
 \end{aligned}
 \qedhere
\]
\end{proof}

We now collect the terms involving at least two edges
into a single linear map.
For each \(J\subseteq E(H)\) with \(|J|\ge2\), choose an
edge \(\underline e_J\in J\) and fix an orientation \(e_J\).
Define
\[
 \mathcal B_UX
 =
 \frac1{r(r-1)}
 \sum_{\substack{J\subseteq E(H)\\|J|\ge2}}
 p^{m-|J|}
 \sum_{\substack{v,w\in V(H)\\v\ne w}}
 \mathcal B_{F_J;v,w;e_J}^U X,
\]
and put \(\gamma=2mp^{m-1}/[r(r-1)]\).
Expanding \(W=p+U\) in the averaged two-root density gives
the exact identity
\begin{equation}\label{eq:general-two-root}
 M_H=\gamma\mathcal L_rU+\mathcal B_UU.
\end{equation}
To verify the single-edge contribution, fix an edge of
\(H\). The ordered root placements contribute
\(2U\) when both roots are its endpoints,
\(2(r-2)\mathcal R_xU\) when only the first root is an
endpoint, and \(2(r-2)\mathcal R_yU\) when only the second
root is an endpoint. When neither root is an endpoint,
the contribution is \((r-2)(r-3)\mathcal JU\).
Multiplying by \(p^{m-1}\), summing over the \(m\) edges,
and dividing by \(r(r-1)\) yields \(\gamma\mathcal L_rU\).
The empty edge set cancels the constant \(p^m\) in \(M_H\),
while all edge sets of size at least two contribute
\(\mathcal B_UU\).

To invert the single-edge contribution, set
\(\theta=p^{-(m-1)}\) and define the correction map
\(\mathcal A_U=\gamma^{-1}\mathcal P_r\mathcal B_U\).
The kernel \(U\), and hence every unselected edge factor,
remains fixed as the argument of \(\mathcal A_U\) varies.
Lemmas~\ref{lem:mapbounds} and~\ref{lem:marginalinverse}
give
\[
 \cut{\mathcal A_UX}\le C_H\theta\cut{X}
 \qquad\text{for every real kernel }X.
\]
We next apply the edge-substitution lemma to two successive
corrections. The resulting estimate controls
\(\mathcal A_U^2U=\mathcal A_U(\mathcal A_UU)\);
it does not assert a corresponding bound for the operator
norm of \(\mathcal A_U^2\).

\begin{lemma}[Composed correction]\label{lem:squaredcorrection}
With the notation above,
\[
 \cut{\mathcal A_U^2U}
 \le C_H\theta^2
 \bigl(d^{5/4}+A_2+\sqrt{A_2d}\bigr).
\]
\end{lemma}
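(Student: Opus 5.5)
We expand \(\mathcal A_U^2U=\gamma^{-1}\mathcal P_r\mathcal B_U\bigl(\gamma^{-1}\mathcal P_r\mathcal B_UU\bigr)\) into a finite linear combination of terms of the form \(\mathcal B_{F_J;v,w;e_J}^U(\mathcal M K)\). Here \(\mathcal M\) is one of \(I,\mathcal R_x,\mathcal R_y,\mathcal J\), and \(K=t_{v',w'}(F_{J'},U;\cdot,\cdot)\) with \(|J|,|J'|\ge2\). Each term is bounded separately by Lemma~\ref{lem:substitution}, and the coefficients are collected at the end. The outer \(\mathcal P_r\) costs at most a factor \(4\) in cut norm by Lemma~\ref{lem:marginalinverse}, so it suffices to bound \(\cut{\mathcal B_U(\mathcal P_r\mathcal B_UU)}\).

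First, \(\mathcal B_UU\) is exact. By the definition and \(\mathcal B_{F;b,c;e}^UU=t_{b,c}(F,U;\cdot,\cdot)\), we have
\[
\mathcal B_UU=\frac1{r(r-1)}\sum_{|J'|\ge2}p^{m-|J'|}\sum_{v'\ne w'}t_{v',w'}(F_{J'},U;\cdot,\cdot).
\]
Next we apply \(\mathcal P_r\), which is a combination of \(I,\mathcal R_x,\mathcal R_y,\mathcal J\) with coefficients bounded by \(1\) in absolute value. By the last assertion of Lemma~\ref{lem:mapbounds}, each \(\mathcal M t_{v',w'}(F_{J'},U)\) equals \(t_{b_2,c_2}(F_2,U)\) for a simple graph \(F_2\). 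This graph is \(F_{J'}\) with up to two isolated vertices adjoined, has distinct roots, and has the same edge set, so it still has at least two edges. Thus \(\mathcal P_r\mathcal B_UU\) is a linear combination, with \(O_H(1)\) terms and coefficients of absolute value at most \(p^{-(m-2)}\), of two-root densities of simple graphs with at least two edges.

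Then we apply \(\mathcal B_U\) by linearity. Each resulting term is \(\mathcal B_{F_J;v,w;e_J}^U\bigl(t_{b_2,c_2}(F_2,U)\bigr)\), where \(F_1=F_J\) has at least two edges and distinct roots \(v,w\). These are exactly the hypotheses of Lemma~\ref{lem:substitution}, so each term has cut norm at most \(C_*(d^{5/4}+A_2+\sqrt{A_2d})\). The coefficients of these terms are bounded by \(p^{-2(m-2)}\) times a factor depending only on \(H\). The prefactor \(\gamma^{-2}\) is \(C_H p^{-2(m-1)}=C_H\theta^2\). Since \(p^{-(m-2)}\) can exceed \(1\), the total \(p\)-dependence is a priori \(\theta^2p^{-2(m-2)}\), not \(\theta^2\).

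The main obstacle is this \(p\)-bookkeeping. The coefficient \(p^{m-|J|}\) is at most \(1\) whenever \(|J|\le m\), so the weights in \(\mathcal B_U\) satisfy \(p^{m-|J|}\le 1\) rather than merely \(p^{-(m-2)}\). Hence all coefficients in both applications of \(\mathcal B_U\) are bounded by constants depending only on \(H\), and only the two factors \(\gamma^{-1}\) carry negative powers of \(p\). This yields the stated \(C_H\theta^2\).

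The remaining check is that the marginal maps do not destroy the hypothesis of Lemma~\ref{lem:substitution}. They could have done so by making a root isolated while the lemma needs an edge avoiding \(\{b_2,c_2\}\). The lemma only requires \(F_2\) to be simple with at least two edges and distinct roots, however, and this is guaranteed by the table in Lemma~\ref{lem:mapbounds}. Summing the \(O_H(1)\) terms with the triangle inequality gives the claim.
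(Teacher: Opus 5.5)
Your proposal is correct and follows essentially the same route as the paper. You expand $\gamma^{-2}\mathcal P_r\mathcal B_U\mathcal P_r\mathcal B_UU$ into terms $\mathcal B_s\bigl(M(\mathcal B_tU)\bigr)$, use the graph representations of Lemma~\ref{lem:mapbounds} to apply Lemma~\ref{lem:substitution} to each term (including isolated inner roots), and collect coefficients using $p^{m-|J|}\le1$ and $\sum_M|\eta_M|\le4$, so that only $\gamma^{-2}=C_H\theta^2$ carries negative powers of $p$.
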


\begin{proof}
If \(m=1\), then \(\mathcal B_U=0\), so the conclusion
is immediate. Assume henceforth that \(m\ge2\).
Throughout the composition, the background kernel \(U\)
and all selected oriented edges remain fixed. Thus
\[
 \mathcal A_U^2U
 =
 \gamma^{-2}\mathcal P_r\mathcal B_U
 \mathcal P_r\mathcal B_UU.
\]

Write \(\mathcal B_U=\sum_s c_s\mathcal B_s\), where
each \(\mathcal B_s\) is a selected-edge map associated
with a graph \(F_J\) having at least two edges.
The coefficients are nonnegative, and the normalization
over ordered root pairs gives
\[
 \sum_s c_s
 =
 \sum_{\substack{J\subseteq E(H)\\|J|\ge2}}p^{m-|J|}
 \le 2^m-m-1=:N_H.
\]
Also write
\(\mathcal P_r=\sum_{M\in\mathscr M}\eta_M M\), where
\(\mathscr M=\{I,\mathcal R_x,\mathcal R_y,\mathcal J\}\).
Its coefficients satisfy
\[
 \sum_{M\in\mathscr M}|\eta_M|
 =
 1+2\frac{r-2}{r-1}+\frac{r-2}{r}
 \le4.
\]

Consider a term
\(\mathcal B_s\bigl(M(\mathcal B_tU)\bigr)\)
in the expansion of
\(\mathcal B_U\mathcal P_r\mathcal B_UU\).
The kernel \(\mathcal B_tU\) is a two-root density of
a simple graph with at least two edges.
By the graph representations in Lemma~\ref{lem:mapbounds},
applying \(M\) gives another two-root density, represented
by retaining all original vertices and edges and adding
up to two isolated roots.
Any old root that ceases to be distinguished remains
an integrated vertex. Consequently, the representing
graph is simple, has distinct roots, and still has at
least two edges.

The outer map \(\mathcal B_s\) is likewise associated
with a simple graph having at least two edges.
Using fresh disjoint copies of these two graphs, we may
therefore apply Lemma~\ref{lem:substitution}.
This includes the cases in which one or both inner roots
are newly added isolated vertices. Hence
\[
 \cut{\mathcal B_s\bigl(M(\mathcal B_tU)\bigr)}
 \le C_*\bigl(d^{5/4}+A_2+\sqrt{A_2d}\bigr).
\]
The triangle inequality and the coefficient bounds above
now yield
\[
 \cut{\mathcal B_U\mathcal P_r\mathcal B_UU}
 \le4C_*N_H^2
 \bigl(d^{5/4}+A_2+\sqrt{A_2d}\bigr).
\]

Finally, apply the cut-norm bound for the outer
\(\mathcal P_r\) from Lemma~\ref{lem:marginalinverse}.
Since
\(\gamma^{-2}=[r(r-1)/(2m)]^2\theta^2\), we obtain
\[
 \begin{aligned}
 \cut{\mathcal A_U^2U}
 &\le16C_*N_H^2\gamma^{-2}
       \bigl(d^{5/4}+A_2+\sqrt{A_2d}\bigr)\\
 &\le C_H\theta^2
       \bigl(d^{5/4}+A_2+\sqrt{A_2d}\bigr),
 \end{aligned}
\]
where \(C_H\) depends only on \(H\).
\end{proof}

We can now combine the degree estimate and the composed
correction bound to complete the general local argument.

\begin{proof}[Proof of Proposition~\ref{prop:general-local}]
Set \(h=\gamma^{-1}\mathcal P_rM_H\).
Applying \(\gamma^{-1}\mathcal P_r\) to
\eqref{eq:general-two-root} and using
Lemma~\ref{lem:marginalinverse}, we obtain
\(h=U+\mathcal A_UU\).
Moreover, Corollary~\ref{cor:rooted} and the cut-norm
bound for \(\mathcal P_r\) give
\(\cut{h}\le C_H\theta\xi\).

With the background kernel \(U\) fixed, the map
\(\mathcal A_U\) is linear. Applying it to the identity
for \(h\) and rearranging yields the exact cancellation
\[
 U=h-\mathcal A_Uh+\mathcal A_U^2U.
\]
This identity requires no smallness assumption on the
operator norm of \(\mathcal A_U\).
Using
\(\cut{\mathcal A_UX}\le C_H\theta\cut{X}\),
Lemma~\ref{lem:squaredcorrection}, and \(\theta\ge1\),
we deduce
\[
 \begin{aligned}
 d
 &\le\cut{h}+\cut{\mathcal A_Uh}
       +\cut{\mathcal A_U^2U}\\
 &\le C_H\theta^2\xi
       +C_H\theta^2
        \bigl(d^{5/4}+A_2+\sqrt{A_2d}\bigr).
 \end{aligned}
\]

It remains to control the terms involving \(A_2\).
By Lemma~\ref{lem:generaldegree},
\(A_2\le C_H\theta^2(\xi+d^2)\).
Consequently,
\[
 \begin{aligned}
 \theta^2A_2
 &\le C_H\theta^4(\xi+d^2),\\
 \theta^2\sqrt{A_2d}
 &\le C_H\theta^3\sqrt{\xi d+d^3}
 \le C_H\theta^3\sqrt{\xi d}
      +C_H\theta^3d^{3/2}.
 \end{aligned}
\]
Since \(0\le d\le1\), both \(d^2\) and \(d^{3/2}\)
are at most \(d^{5/4}\).
Together with \(\theta\ge1\), these estimates give
\[
 d\le C_H\theta^4(\xi+d^{5/4})
      +C_H\theta^3\sqrt{\xi d}.
\]
Young's inequality bounds the last term by
\(d/2+C_H\theta^6\xi\).
Absorbing \(d/2\) into the left-hand side and using
\(\theta^4\le\theta^6\), we conclude that
\[
 d\le C_H\theta^6(\xi+d^{5/4})
   =C_Hp^{-6(m-1)}(\xi+d^{5/4})
   \le C_Hp^{-6m}(\xi+d^{5/4}).
\]
Increasing \(C_H\) to at least one and taking
\(L(H,p)=C_Hp^{-6m}\) proves the proposition.
\end{proof}

\section{From local estimates to hereditary quasirandomness}\label{sec:reduction}

We now apply Proposition~\ref{prop:general-local} to finite
graphs. The argument first converts hereditary subset
counts into weighted homomorphism counts, then controls
the resulting finite-size error by edge discrepancy.
A qualitative quasirandomness threshold makes the
discrepancy sufficiently small to absorb the superlinear
remainder, yielding the global linear bound.

The first step is the following conversion from subset
tests to \([0,1]\)-valued vertex weights.

\begin{lemma}\label{lem:weighted}
Let \(H\) be a graph with \(r\) vertices and \(m\ge1\) edges,
and let \(p\in(0,1)\). Suppose that an \(n\)-vertex graph
\(G\) satisfies \(\Pstar_{H,p}(\delta)\), and let \(W\)
be its adjacency kernel on the uniform probability space
\(V(G)\), with \(W(x,x)=0\). Then
\[
 \Def_{H,p}(W)\le\delta+\frac{r(r-1)}{n}.
\]
The same bound holds if \(N_H(G[S])\) in
\eqref{eq:hereditary} is replaced by the number of all
edge-preserving maps \(V(H)\to S\), without requiring
injectivity.
\end{lemma}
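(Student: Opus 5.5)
The plan is to go through a multilinear function on the cube \([0,1]^{V(G)}\). That function takes its extreme values at indicator vectors, where the hypothesis \(\Pstar_{H,p}(\delta)\) applies directly. Write \(V=V(G)\) and identify \(V(H)\) with \([r]\). For \(f\colon V\to[0,1]\), define
\[
 N_{\mathrm{inj}}(f)=\sum_{\varphi}\prod_{i=1}^r f(\varphi(i)),
 \qquad
 D(f)=\sum_{\substack{(x_1,\ldots,x_r)\in V^r\\ \text{distinct}}}\prod_{i=1}^r f(x_i).
\]
The first sum runs over injective edge-preserving maps \(\varphi\colon[r]\to V\). Let \(N_{\mathrm{hom}}(f)\) be the analogous sum over all edge-preserving maps, so that \(t(H,W;f)=n^{-r}N_{\mathrm{hom}}(f)\), since \(W(x,x)=0\) kills loops. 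Set
\[
 \Phi(f)=n^{-r}\bigl(N_{\mathrm{inj}}(f)-p^mD(f)\bigr).
\]
Each monomial in \(N_{\mathrm{inj}}\) and in \(D\) involves distinct points, so \(\Phi\) is affine in each coordinate \(f(x)\) separately. Hence \(\sup_{[0,1]^V}|\Phi|\) is attained at a vertex of the cube, that is, at some \(f=\one_S\).

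Next I record the collision errors. Put \(c=r(r-1)/(2n)\). By a union bound over the \(\binom r2\) pairs of coordinates that may coincide, the number of non-injective tuples in \(V^r\) is at most \(\binom r2 n^{r-1}=cn^r\). All weights lie in \([0,1]\), so
\[
 0\le n^{-r}\bigl(N_{\mathrm{hom}}(f)-N_{\mathrm{inj}}(f)\bigr)\le c
 \qquad\text{and}\qquad
 0\le\Bigl(\int f\,d\mu\Bigr)^r-n^{-r}D(f)\le c .
\]
Therefore
\[
 t(H,W;f)-p^m\Bigl(\int f\,d\mu\Bigr)^r
 =\Phi(f)+\alpha-p^m\beta,
 \qquad \alpha,\beta\in[0,c].
\]
The key point is that the two correction terms have opposite signs. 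Consequently \(\alpha-p^m\beta\) lies in \([-c,c]\), rather than contributing \(2c\).

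At a vertex \(f=\one_S\), in the injective formulation, I would write
\[
 \Phi(\one_S)=\frac{N_H(G[S])-p^m|S|^r}{n^r}+p^m\frac{|S|^r-D(\one_S)}{n^r}.
\]
The first term has absolute value at most \(\delta\) by hypothesis, and the second lies in \([0,c]\). Hence \(|\Phi(\one_S)|\le\delta+c\).

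In the homomorphism formulation, \(N_H(G[S])=N_{\mathrm{hom}}(\one_S)\), and I would write
\[
 \Phi(\one_S)=\frac{N_H(G[S])-p^m|S|^r}{n^r}
 -\frac{N_{\mathrm{hom}}(\one_S)-N_{\mathrm{inj}}(\one_S)}{n^r}
 +p^m\frac{|S|^r-D(\one_S)}{n^r}.
\]
The last two terms are of opposite signs and each has size at most \(c\), so again \(|\Phi(\one_S)|\le\delta+c\).

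By multilinearity, \(|\Phi(f)|\le\delta+c\) on the whole cube. Combining this with the bound on \(\alpha-p^m\beta\) gives
\[
 \Def_{H,p}(W)\le\delta+2c=\delta+\frac{r(r-1)}{n}.
\]
The only delicate point is this sign bookkeeping. A naive triangle inequality would give \(\delta+3c\) or \(\delta+(1+2p^m)c\) instead of the stated constant, so the pairing of the opposite-signed collision terms must be done carefully in both formulations.
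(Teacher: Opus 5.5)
Your proof is correct and is essentially the paper's argument. Your reduction to vertices of the cube through the multiaffine \(\Phi\) is the deterministic form of the paper's random subset \(S_f\), which includes each vertex independently with probability \(f(x)\). Collisions are controlled by the same \(\binom r2 n^{r-1}\) union bound in both.

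The only difference is bookkeeping. The paper applies the hypothesis to every realization of \(S_f\). It then compares \(\mathbb E N(S_f)\) with \(n^r t(H,W;f)\), and \(\mathbb E|S_f|^r\) with \((\sum_x f(x))^r\), each within \(\binom r2 n^{r-1}\). A plain triangle inequality then gives \(\delta+(1+p^m)\frac{r(r-1)}{2n}\) under both counting conventions. So the sign pairing you call delicate is needed only because you route through injective counts and falling factorials.
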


\begin{proof}
For each pair of distinct vertices of \(H\), exactly
\(n^{r-1}\) maps \(V(H)\to V(G)\) identify that pair.
By the union bound, the number of noninjective maps is
therefore at most \(D=\binom r2 n^{r-1}\).

Fix \(f\colon V(G)\to[0,1]\), and form a random subset
\(S_f\) by including each \(x\in V(G)\) independently
with probability \(f(x)\).
Put \(s=\sum_{x\in V(G)}f(x)\), so that
\(\int f\,d\mu=s/n\).
Write \(N(S)\) for the number of edge-preserving maps
into \(S\) under the counting convention used in the
hypothesis.

We first compare \(n^rt(H,W;f)\) with
\(\mathbb E N(S_f)\).
For every injective edge-preserving map \(\varphi\),
both expressions assign the contribution
\(\prod_{v\in V(H)}f(\varphi(v))\).
Under the injective counting convention, the remaining
contributions occur only in the weighted homomorphism
density and are each at most one.
Under the homomorphism counting convention, a
noninjective edge-preserving map contributes
\[
 \prod_{v\in V(H)}f(\varphi(v))
 \quad\text{and}\quad
 \prod_{x\in\varphi(V(H))}f(x)
\]
to the two expressions, respectively.
Both products lie in \([0,1]\), so their difference
has absolute value at most one.
Thus, under either convention,
\[
 \bigl|n^rt(H,W;f)-\mathbb E N(S_f)\bigr|\le D.
\]

Similarly, expand \(\mathbb E|S_f|^r\) and \(s^r\) as
sums over ordered \(r\)-tuples of vertices.
The contributions agree when all entries are distinct.
For each tuple with a repeated entry, the two
contributions belong to \([0,1]\), so their difference
has absolute value at most one.
Consequently,
\(\bigl|\mathbb E|S_f|^r-s^r\bigr|\le D\).

The hereditary hypothesis holds for every realization
of \(S_f\). Taking expectations therefore gives
\[
 \left|\mathbb E N(S_f)-p^m\mathbb E|S_f|^r\right|
 \le
 \mathbb E\left|N(S_f)-p^m|S_f|^r\right|
 \le\delta n^r.
\]
Combining the three estimates and using \(p^m\le1\),
we obtain
\[
 \begin{aligned}
 \left|t(H,W;f)
       -p^m\left(\int f\,d\mu\right)^r\right|
 &\le\delta+\frac{(1+p^m)D}{n^r}\\
 &\le\delta+\frac{r(r-1)}{n}.
 \end{aligned}
\]
Taking the supremum over \(f\colon V(G)\to[0,1]\)
proves both versions of the lemma.
\end{proof}

\rev{We use the standard counting lemma to transfer cut-norm
control to weighted counts; see~\cite[Lemma~10.24]{LovaszBook}.
For the prescribed-set version, see~\cite[Lemma~10]{He}.
We record the two-kernel, signed-weight form and retain its
full adaptation to the present setting.}

\begin{lemma}[Weighted counting; \rev{cf.~{\cite[Lemma~10.24]{LovaszBook}}}]\label{lem:counting}
Let \(F\) be a finite simple graph with \(\ell\) edges,
and let \(W,W'\colon\Omega^2\to[0,1]\) be symmetric
measurable kernels on a probability space \((\Omega,\mu)\).
For measurable vertex weights
\(f_v\colon\Omega\to\mathbb R\) satisfying
\(\norm{f_v}{\infty}\le1\), we have
\[
 \bigl|t(F,W;\mathbf f)-t(F,W';\mathbf f)\bigr|
 \le4\ell\,\cut{W-W'}.
\]
Here the cut norm is defined by taking the supremum
over measurable subsets \(A,B\subseteq\Omega\).
\end{lemma}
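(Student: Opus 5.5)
The plan is the usual telescoping argument, replacing the edge factors one at a time. Fix an ordering \(e_1,\ldots,e_\ell\) of \(E(F)\). For \(k=0,\ldots,\ell\), let \(W_k\) denote the assignment that uses \(W'\) on the edges \(e_1,\ldots,e_k\) and \(W\) on the remaining edges. Write \(t_k\) for the corresponding weighted density, with the same vertex weights \(\mathbf f\). Then
\[
 t(F,W;\mathbf f)-t(F,W';\mathbf f)=\sum_{k=1}^{\ell}(t_{k-1}-t_k).
\]
Each difference \(t_{k-1}-t_k\) is the weighted integral in which the factor on \(e_k=\{i,j\}\) equals \(W-W'\). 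Every other edge factor is either \(W\) or \(W'\), so it takes values in \([0,1]\).

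It therefore suffices to show that each such term has absolute value at most \(4\cut{W-W'}\). Fix all variables \(x_u\) with \(u\notin\{i,j\}\). Since \(F\) is simple, every remaining edge factor other than the one on \(e_k\) depends on at most one of \(x_i,x_j\). Collect these factors, together with \(f_i\) and \(f_j\), into functions \(g(x_i)\) and \(h(x_j)\) with \(\norm{g}{\infty},\norm{h}{\infty}\le1\). The factors that involve neither variable, together with the other vertex weights, give a constant \(c\) with \(|c|\le1\). The inner integral is then
\[
 c\int g(x)(W-W')(x,y)h(y)\,d\mu(x)d\mu(y).
\]
By \eqref{eq:cutdual}, its absolute value is at most \(4\cut{W-W'}\). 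This bound is uniform in the fixed variables, so it survives integration over them by Fubini. Summing over the \(\ell\) terms then gives \(4\ell\cut{W-W'}\).

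The only point that needs care is the measure-theoretic setting. The statement allows a general probability space, while \eqref{eq:cutdual} was stated for finite spaces. The inequality \(|\int gKh|\le4\cut{K}\) for measurable \(\norm{g}{\infty},\norm{h}{\infty}\le1\) follows by the same argument in general. Split \(g\) and \(h\) into positive and negative parts, which gives four terms. Each part takes values in \([0,1]\), so write it as \(\int_0^1\one_{\{g_\pm>s\}}\,ds\) (the layer-cake representation). Each of the four terms is then an average of cut integrals over measurable rectangles, and so is at most \(\cut{K}\) in absolute value. Fubini is justified because every integrand is bounded and the underlying measure is a probability measure. I expect no genuine obstacle here.
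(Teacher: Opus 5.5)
Your proposal is correct and follows the paper's proof: you telescope over the edges, freeze every variable except the endpoints of the edge carrying \(W-W'\), use simplicity of \(F\) to reduce to a bilinear integral \(c\int g(x)(W-W')(x,y)h(y)\,d\mu(x)d\mu(y)\), and bound it by \(4\cut{W-W'}\) via \eqref{eq:cutdual}. Your layer-cake splitting into positive and negative parts is the justification, on a general probability space, of the ``level-set form'' of \eqref{eq:cutdual} that the paper cites without proof.
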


\begin{proof}
\begin{revision}
The assertion is immediate if \(\ell=0\). Otherwise telescope
the difference of the products over the \(\ell\) edges, as in
the standard counting lemma. In each term one edge carries
\(W-W'\), and all other edges carry \(W\) or \(W'\).
Fix all variables except the endpoints \(x,y\) of that edge.
Simplicity of \(F\) leaves an integrand
\(c\,a(x)(W-W')(x,y)b(y)\), with
\(|c|,\norm{a}{\infty},\norm{b}{\infty}\le1\), including
signed vertex weights. The level-set form of
\eqref{eq:cutdual}, valid on any probability space, bounds
this integral by \(4\cut{W-W'}\).
Fubini's theorem and summation over the edges prove the claim.
\end{revision}
\end{proof}

The conversion in Lemma~\ref{lem:weighted} introduces
an error of order \(n^{-1}\).
The following lower bound for the discrepancy of a
finite simple graph allows us to control this error
by a quadratic term in \(\Disc_p(G)\).
\rev{The argument is a standard Rademacher second- and
fourth-moment estimate (compare~\cite[Section~3]{AN}); we
retain the calculation for the explicit finite-size constant.}

\begin{lemma}[Finite-size discrepancy]\label{lem:finitecut}
Let \(G\) be a simple graph on \(n\ge1\) vertices, and
let \(p\in(0,1)\). Put \(\alpha_p=\min\{p,1-p\}\).
Then
\[
 \Disc_p(G)\ge\frac{\alpha_p}{4\sqrt{3n}}.
\]
Equivalently,
\[
 \frac1n\le\beta_p\Disc_p(G)^2,
 \qquad
 \beta_p=\frac{48}{\alpha_p^2}.
\]
\end{lemma}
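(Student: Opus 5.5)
The plan is to bound \(\Disc_p(G)\) from below through the \(L^\infty\)-to-\(L^1\) norm of \(T_U\), where \(U=W-p\) and \(W\) is the adjacency kernel of \(G\) on the uniform space \(V(G)\), with \(W(x,x)=0\). Since \(\cut{U}=\Disc_p(G)\), the comparison \eqref{eq:cutdual} gives
\[
 4\Disc_p(G)\ge\norm{T_U}{\infty\to1}
 =\frac1{n^2}\max_{\varepsilon,\eta\in\{-1,1\}^{V(G)}}
 \Bigl|\sum_{x,y}\varepsilon_xU(x,y)\eta_y\Bigr|.
\]
It therefore suffices to find signs with \(\sum_{x,y}\varepsilon_xU(x,y)\eta_y\ge \alpha_p n^{3/2}/\sqrt3\).

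The only structural input is that every entry of \(U\) is large in absolute value. Off the diagonal, \(U(x,y)\in\{1-p,-p\}\). On the diagonal, \(U(x,x)=-p\). Hence \(|U(x,y)|\ge\alpha_p\) for all \(x,y\), and so for every \(y\),
\[
 \sum_x U(x,y)^2\ge \alpha_p^2 n.
\]

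Next I choose \(\varepsilon\) uniformly at random, and then take \(\eta_y=\operatorname{sgn}\bigl(\sum_x\varepsilon_xU(x,y)\bigr)\) for each \(y\). With this choice,
\[
 \sum_{x,y}\varepsilon_xU(x,y)\eta_y=\sum_y\Bigl|\sum_x\varepsilon_xU(x,y)\Bigr|.
\]
For fixed \(y\), put \(X_y=\sum_x\varepsilon_xU(x,y)\). Then \(\mathbb E X_y^2=\sum_xU(x,y)^2\). A direct fourth-moment computation, keeping only terms in which every index is repeated, gives \(\mathbb E X_y^4\le3(\mathbb E X_y^2)^2\). Hölder's inequality in the form \(\mathbb E X^2\le(\mathbb E|X|)^{2/3}(\mathbb E X^4)^{1/3}\) then yields
\[
 \mathbb E|X_y|\ge\frac{(\mathbb E X_y^2)^{3/2}}{(\mathbb E X_y^4)^{1/2}}\ge\frac{(\mathbb E X_y^2)^{1/2}}{\sqrt3}\ge\frac{\alpha_p\sqrt n}{\sqrt3}.
\]

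Summing over \(y\) shows that some choice of \(\varepsilon\) achieves \(\sum_y|X_y|\ge\alpha_p n^{3/2}/\sqrt3\). Dividing by \(n^2\) gives \(\norm{T_U}{\infty\to1}\ge\alpha_p/\sqrt{3n}\), and hence \(\Disc_p(G)\ge\alpha_p/(4\sqrt{3n})\). The equivalent form follows by squaring: \(1/n\le 48\,\Disc_p(G)^2/\alpha_p^2=\beta_p\Disc_p(G)^2\).

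The only step requiring care is the fourth-moment bound for \(X_y\). Expanding \(\mathbb E(\sum_xa_x\varepsilon_x)^4\), only index patterns with every index repeated survive. This gives
\[
 \sum_x a_x^4+3\sum_{x\ne x'}a_x^2a_{x'}^2\le3\Bigl(\sum_x a_x^2\Bigr)^2.
\]
One must also include the diagonal entries \(U(x,x)=-p\), which are equally bounded below by \(\alpha_p\) in absolute value, so they cause no difficulty.
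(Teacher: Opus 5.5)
Your proof is correct and follows essentially the same route as the paper. You use the entrywise bound \(|U|\ge\alpha_p\) including the diagonal, Rademacher signs with the second- and fourth-moment identities, the H\"older step \(\mathbb E X^2\le(\mathbb E|X|)^{2/3}(\mathbb E X^4)^{1/3}\), and the comparison \eqref{eq:cutdual} to pass from \(\norm{T_U}{\infty\to1}\) to \(\Disc_p(G)\). The only cosmetic difference is that you fix the dual signs \(\eta\) explicitly and select a good realization of \(\varepsilon\), whereas the paper takes expectations of the realization-wise bound \(\norm{T_U}{\infty\to1}\ge\norm{T_U\varepsilon}{1}\).
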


\begin{proof}
Let \(W\) be the adjacency kernel of \(G\) on the uniform
probability space \(V(G)\), and put \(U=W-p\).
Every entry of \(U\), including each diagonal entry,
belongs to \(\{-p,1-p\}\), so \(|U(x,y)|\ge\alpha_p\).

Let \((\varepsilon_y)_{y\in V(G)}\) be independent random
signs, each taking the values \(-1\) and \(1\) with equal
probability. For each \(x\in V(G)\), set
\(Z_x=\sum_yU(x,y)\varepsilon_y\) and
\(v_x=\sum_yU(x,y)^2\).
Independence and symmetry of the signs give
\[
 \mathbb E Z_x^2=v_x\ge n\alpha_p^2,
 \qquad
 \mathbb E Z_x^4
 =3v_x^2-2\sum_yU(x,y)^4
 \le3v_x^2.
\]
By H\"older's inequality with exponents \(3/2\) and \(3\),
\[
 \mathbb E Z_x^2
 =\mathbb E\bigl(|Z_x|^{2/3}|Z_x|^{4/3}\bigr)
 \le(\mathbb E|Z_x|)^{2/3}
     (\mathbb E|Z_x|^4)^{1/3}.
\]
Since \(v_x>0\), rearranging and using the moment bounds
yields
\[
 \mathbb E|Z_x|
 \ge\frac{v_x^{3/2}}{(\mathbb E Z_x^4)^{1/2}}
 \ge\sqrt{\frac{v_x}{3}}
 \ge\alpha_p\sqrt{\frac n3}.
\]

For every realization of the signs,
\(\norm{\varepsilon}{\infty}=1\) and
\(T_U\varepsilon(x)=Z_x/n\).
Thus, with respect to the uniform measure,
\[
 \norm{T_U}{\infty\to1}
 \ge\norm{T_U\varepsilon}{1}
 =\frac1{n^2}\sum_x|Z_x|.
\]
Taking expectations and applying \eqref{eq:cutdual},
together with \(\cut{U}=\Disc_p(G)\), gives
\[
 4\Disc_p(G)
 \ge\norm{T_U}{\infty\to1}
 \ge\frac1{n^2}\sum_x\mathbb E|Z_x|
 \ge\frac{\alpha_p}{\sqrt{3n}}.
\]
This proves the first inequality. Squaring and rearranging
gives the equivalent bound
\(n^{-1}\le48\alpha_p^{-2}\Disc_p(G)^2\).
\end{proof}

We use the following cut-discrepancy formulation of the
hereditary quasirandomness theorem of Simonovits and
S\'os~\cite{SS}; see also the stronger quantitative
version in~\cite[Theorem~1.2]{CFS}.
The case \(H=K_2\) is immediate.

\begin{lemma}[\rev{Simonovits--S\'os {\cite{SS}}; see also {\cite[Theorem~1.2]{CFS}}}]\label{lem:qualitative}
Let \(H\) be a fixed graph with at least one edge, and
let \(p\in(0,1)\). For every \(\varepsilon>0\), there
exist \(\delta_0=\delta_0(H,p,\varepsilon)>0\) and
an integer \(n_0=n_0(H,p,\varepsilon)\ge1\) such that
every \(n\)-vertex graph \(G\) with \(n\ge n_0\) satisfies
\[
 \Pstar_{H,p}(\delta_0)
 \quad\Longrightarrow\quad
 \Disc_p(G)\le\varepsilon.
\]
The same choices of \(\delta_0\) and \(n_0\) may be used
whether the hereditary condition counts injective
edge-preserving maps or all edge-preserving maps.
\end{lemma}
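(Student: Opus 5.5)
The plan is to derive the lemma from the published hereditary quasirandomness theorem rather than reprove it. The work has three parts: put the cited conclusion into cut-discrepancy form, choose the constants, and show that the two counting conventions change the hypothesis only by a lower-order term.

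\emph{Form of the cited result.} I would quote Simonovits and S\'os~\cite{SS}, or \cite[Theorem~1.2]{CFS}, in the following form. For every \(\varepsilon'>0\) there are \(\delta_1>0\) and \(n_1\) such that every \(n\)-vertex graph with \(n\ge n_1\) satisfying \(\Pstar_{H,p}(\delta_1)\) (injective counts) obeys
\[
 \bigl|e(G[S])-\tfrac p2|S|^2\bigr|\le\varepsilon' n^2
 \qquad\text{for every }S\subseteq V(G).
\]
When \(H=K_2\) the hypothesis already says this with \(\varepsilon'=\delta/2\), up to a diagonal correction of order \(n\), so no citation is needed.

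\emph{From induced edge counts to \(\Disc_p(G)\).} Given \(A,B\subseteq V(G)\), split them into the disjoint pieces \(X=A\setminus B\), \(Y=B\setminus A\) and \(Z=A\cap B\). Then \(e_G(A,B)\) is a sum of \(e_G(Z,Z)=2e(G[Z])\) and cross terms \(e_G(P,Q)\) between disjoint pieces. Each cross term satisfies
\[
 e_G(P,Q)=e(G[P\cup Q])-e(G[P])-e(G[Q]).
\]
In every one of these expressions the main terms \(p|S|^2/2\) combine exactly to \(p|A||B|\). Hence at most a bounded number of error terms remain, giving \(\Disc_p(G)\le C\varepsilon'\) for an absolute constant \(C\), here at most \(10\). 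Choosing \(\varepsilon'=\varepsilon/C\), the lemma holds for the injective convention with \(\delta_0=\delta_1\) and \(n_0=n_1\).

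\emph{Both conventions with the same constants.} The number of noninjective maps \(V(H)\to S\) is at most \(\binom r2 n^{r-1}\), as in the proof of Lemma~\ref{lem:weighted}. So \(\Pstar_{H,p}(\delta)\) under either convention implies \(\Pstar_{H,p}(\delta+r^2/n)\) under the other. I would run the argument above with \(\delta_1\) supplied for the target \(\varepsilon/C\), and then set
\[
 \delta_0=\delta_1/2,\qquad n_0=\max\{n_1,\lceil 2r^2/\delta_1\rceil\}.
\]
For \(n\ge n_0\), either convention at level \(\delta_0\) gives the injective condition at level \(\delta_1\), so one pair \((\delta_0,n_0)\) serves both formulations.

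\emph{Main obstacle.} The difficulty is bookkeeping rather than mathematics. The cited theorems state their hypotheses and conclusions in slightly different normalizations: unlabelled versus labelled copies, counts \(\pm o(n^r)\), and induced edge densities rather than bipartite discrepancy. I would check that each difference either rescales \(\delta\) by a factor depending only on \(H\) (for instance \(|\mathrm{Aut}(H)|\)) or adds an \(O(n^{-1})\) error. Both kinds of change are absorbed by the choice of \(\delta_0\) and \(n_0\) above.
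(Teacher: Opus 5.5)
Your proposal is correct and follows essentially the same route as the paper. Both cite the qualitative theorem for internal edge counts, convert to cut discrepancy by an exact set identity, and absorb the \(\binom{r}{2}/n\) gap between the two counting conventions by taking \(\delta_0=\delta_1/2\) and enlarging \(n_0\). The only difference is in the set identity: the paper uses \(\int_{A\times B}K=\tfrac12\bigl(q(A\cup B)+q(A\cap B)-q(A\setminus B)-q(B\setminus A)\bigr)\), which costs a factor \(2\). Your three-piece split collects to \(e_G(A,B)=e(G[X\cup Y])+e(G[X\cup Z])+e(G[Y\cup Z])-2e(G[X])-2e(G[Y])\), giving the constant \(7\), within your stated bound of \(10\).
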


\begin{revision}
The cited theorem gives this formulation after two standard
conversions. If \(K=W-p\) and
\(q(S)=\int_{S\times S}K\,d\mu^2\), symmetry gives, even for
overlapping sets \(A,B\),
\[
 \int_{A\times B}K\,d\mu^2
 =\tfrac12\bigl(q(A\cup B)+q(A\cap B)
                 -q(A\setminus B)-q(B\setminus A)\bigr).
\]
Thus cut discrepancy is at most twice the maximum internal
discrepancy. The two counting conventions differ, after
normalization, by at most \(\binom{v(H)}2/n\). Taking half
of an admissible injective-count error as \(\delta_0\) and
increasing \(n_0\) to absorb the other half gives a common
choice for both conventions. No quantitative strengthening
of the cited qualitative theorem is used at this step.
\end{revision}

We apply the qualitative theorem only at a fixed accuracy
depending on \(H\) and \(p\).
Once the discrepancy is below this threshold, the local
estimate gives linear dependence on the hereditary error.

\begin{proof}[Proof of Theorem~\ref{thm:main}]
Write \(r=v(H)\) and \(m=e(H)\ge1\).
Let \(G\) be an \(n\)-vertex graph satisfying
\(\Pstar_{H,p}(\delta)\), under either counting convention.
Let \(W\) be its adjacency kernel on the uniform probability
space \(V(G)\), and put \(d=\cut{W-p}=\Disc_p(G)\).

By Lemma~\ref{lem:weighted} and
Proposition~\ref{prop:general-local}, there is a constant
\(L=L(H,p)\ge1\) such that
\[
 d\le L\left(\delta+\frac{r(r-1)}n+d^{5/4}\right).
\]
Lemma~\ref{lem:finitecut} gives
\(n^{-1}\le\beta_p d^2\).
Since \(0\le d\le1\), we have \(d^2\le d^{5/4}\), and
therefore
\begin{equation}\label{eq:absorb}
 d\le L\delta+Qd^{5/4},
 \qquad
 Q=L\bigl(1+r(r-1)\beta_p\bigr).
\end{equation}
Thus the finite-size error has been incorporated into
the superlinear remainder.

Set \(\varepsilon_*=(2Q)^{-4}\).
Apply Lemma~\ref{lem:qualitative} at this accuracy, and
choose \(\delta_0>0\) and an integer \(n_0\ge1\) that
work for both counting conventions.
Define
\(\delta_*=\min\{\delta_0,p^m n_0^{-r},1\}\).
All these parameters depend only on \(H\) and \(p\).

To apply the qualitative theorem when \(\delta\) is small,
we first observe that the hereditary hypothesis forces
\(\delta\ge p^m/n^r\).
Indeed, for a singleton \(S\subseteq V(G)\), the number
of edge-preserving maps \(V(H)\to S\) is zero, since
\(H\) has an edge and \(G\) has no loops.
Thus either counting convention in
\eqref{eq:hereditary} gives \(p^m\le\delta n^r\).

Suppose first that \(\delta<\delta_*\).
The preceding lower bound, together with
\(\delta<p^m n_0^{-r}\), implies \(n>n_0\).
Also, \(\delta<\delta_0\), so \(G\) satisfies
\(\Pstar_{H,p}(\delta_0)\).
Lemma~\ref{lem:qualitative} therefore gives
\(d\le\varepsilon_*\), and hence
\[
 Qd^{5/4}
 =Qd^{1/4}d
 \le Q\varepsilon_*^{1/4}d
 =\frac d2.
\]
Absorbing this term in \eqref{eq:absorb} yields
\(d\le2L\delta\).

If \(\delta\ge\delta_*\), the bound \(d\le1\) instead
gives \(d\le\delta_*^{-1}\delta\).
Combining the two cases proves
\(\Disc_p(G)\le C(H,p)\delta\), where
\(C(H,p)=\max\{2L,\delta_*^{-1}\}\).
The same constant works for both counting conventions
and for every \(n\ge1\), with no additive finite-size
error.
\end{proof}

Corollary~\ref{cor:linear} follows directly from
Theorem~\ref{thm:main}: taking \(A=B=S\) and using
\(N_{K_2}(G[S])=e_G(S,S)\) gives
\[
 \bigl|N_{K_2}(G[S])-p|S|^2\bigr|
 \le\Disc_p(G)n^2
 \le C(H,p)\delta n^2.
\]

The prescribed-set conclusion follows by combining this
discrepancy bound with weighted counting.

\begin{proof}[Proof of Corollary~\ref{cor:prescribed}]
Let \(W\) be the adjacency kernel of \(G\) on the uniform
probability space \(V(G)\), and put \(d=\Disc_p(G)\).
Fix arbitrary subsets \(S_1,\ldots,S_s\subseteq V(G)\),
and set \(f_i=\one_{S_i}\).
Write \(N_F^{\mathrm{hom}}(G;S_1,\ldots,S_s)\) for the
number of edge-preserving maps that send vertex \(i\)
into \(S_i\), without requiring injectivity.
Then
\(n^{-s}N_F^{\mathrm{hom}}(G;S_1,\ldots,S_s)
=t(F,W;\mathbf f)\).
Applying Lemma~\ref{lem:counting} with \(W'\equiv p\)
gives
\[
 \left|t(F,W;\mathbf f)
       -p^\ell\prod_{i=1}^s\frac{|S_i|}{n}\right|
 \le4\ell d.
\]

For injective counts, the difference from the homomorphism
count is at most the total number of noninjective maps
\(V(F)\to V(G)\).
A union bound over pairs of vertices gives
\[
 0\le
 t(F,W;\mathbf f)
 -\frac{N_F(G;S_1,\ldots,S_s)}{n^s}
 \le\frac{\binom{s}{2}}n.
\]
This estimate holds regardless of overlaps among the
sets \(S_i\).
By Lemma~\ref{lem:finitecut} and \(d\le1\),
\(\binom{s}{2}/n\le\binom{s}{2}\beta_p d^2
\le\binom{s}{2}\beta_p d\).
Consequently,
\[
 \left|
 \frac{N_F(G;S_1,\ldots,S_s)}{n^s}
 -p^\ell\prod_{i=1}^s\frac{|S_i|}{n}
 \right|
 \le
 \left(4\ell+\binom{s}{2}\beta_p\right)d.
\]
For the homomorphism conclusion, the same estimate holds
with the collision term omitted.

Finally, Theorem~\ref{thm:main} gives
\(d\le C(H,p)\delta\) under either counting convention
in the hypothesis.
Substituting this bound and multiplying by \(n^s\)
proves the assertion, with a constant depending only
on \(H,F,p\).
\end{proof}

\section{Stronger estimates for triangle-free graphs}\label{sec:proof}

We now sharpen Proposition~\ref{prop:general-local} for
triangle-free graphs. A direct expansion of the star
terms improves both the exponent of the remainder and
the dependence of the local constant on \(p\).
These estimates provide an additional refinement and
are not needed for the proof of Theorem~\ref{thm:main}.

\begin{proposition}\label{prop:local}
Let \(H\) be a fixed triangle-free graph with \(m\ge1\)
edges, and let \(p\in(0,1)\).
There exists a constant \(L=L(H,p)\), with
\(1\le L(H,p)\le c_Hp^{-3m}\) for some \(c_H\ge1\)
depending only on \(H\), such that every symmetric kernel
\(W\colon\Omega^2\to[0,1]\) on a finite probability space
satisfies
\[
 \cut{W-p}
 \le L\bigl(\Def_{H,p}(W)+\cut{W-p}^{3/2}\bigr).
\]
\end{proposition}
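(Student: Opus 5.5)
Since \(H\) is triangle-free, every spanning subgraph \(F_J\), \(J\subseteq E(H)\), is triangle-free. So Lemma~\ref{lem:nonstar} controls every term of the expansion \(W=p+U\) except the single edges and the stars with isolated vertices. The plan is to expand the averaged two-root density \(M_H\) directly rather than compose corrections. We keep the single-edge part \(\gamma\mathcal L_rU\) from \eqref{eq:general-two-root}, and split the multi-edge part \(\mathcal B_UU\) into non-star terms and star terms. For non-star \(F_J\), Lemma~\ref{lem:nonstar} bounds each two-root density by \(C_0d^{3/2}\) in cut norm. The star terms are handled by root placement, as in the proof of Lemma~\ref{lem:largestars}.

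\textbf{Stars with \(k\ge2\) edges.} If a non-root leaf is integrated, it contributes a factor \(a\) at the centre. The cut integral then involves \(a\) together with at least one further factor of \(a\), \(Tg\) or \(Th\), where \(\norm{Tg}{2}\le2d^{1/2}\). This gives a bound of the form \(A_2+C\sqrt{A_2d}\). The delicate case is \(k=2\) with both roots the two leaves. Then the kernel is \(U^2(x,y)\), which Lemma~\ref{lem:operators} gives only as \(\cut{U^2}\le\norm{T}{\infty\to1}\norm{T}{1\to\infty}\cdot\) (something linear). Instead I would write \(\int g\,U^2 h=\langle Tg,Th\rangle\) and compare it with the \(P_3\) structure. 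A better route is to note that each ordered root placement on a cherry with leaf roots occurs with the positive weight \(p^{m-2}\). The total \(\sum_v\binom{\deg v}{2}\cdot 2\,U^2\) enters \(M_H\) as a positive-semidefinite kernel. I would try to show that this term cannot cancel \(U\) in a harmful way by testing \(M_H\) against \(f=g\) and using \(\langle Tg,Tg\rangle\ge0\).

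\textbf{Main step: positivity.} Concretely, for a cut-achieving pair I would pass to the symmetric form. By the polarization identity in Section~\ref{sec:reduction}, it suffices to bound \(q(S)=\int_{S\times S}U\) up to a factor \(2\). Pair \(M_H\) with \(\one_S\otimes\one_S\), after applying \(\mathcal P_r\) to strip the marginal terms. The identity then reads
\[
 \gamma\, q(S)+c\,\norm{T\one_S}{2}^2=O\bigl(\theta\xi+d^{3/2}+A_2+\sqrt{A_2d}\bigr),
 \qquad c\ge0 .
\]
This yields an upper bound on \(q(S)\). For the matching lower bound, I expect \(\norm{T\one_S}{2}^2\) to be controlled by \(\norm{T}{2\to2}\norm{T\one_S}{1}\), which is too weak. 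So the alternative is to bound \(\norm{T\one_S}{2}^2\) using the \(P_4\)-type estimate after one more substitution. This sign issue for the cherry is the step I expect to be the main obstacle. The first thing I would try is to show that the cherry contributions, summed over placements, are themselves bounded by \(C(A_2+d^{3/2})\) via \(\norm{T\one_S}{2}^2=\int\one_S\,U^2\one_S\), pairing \(U^2\) with \(U\) through Lemma~\ref{lem:fourthtrace}.

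\textbf{Closing the argument.} Lemma~\ref{lem:generaldegree} gives \(A_2\le C_Hp^{-2(m-1)}(\xi+d^2)\). Substituting this into the display and applying Young's inequality to \(\sqrt{A_2d}\), exactly as in the proof of Proposition~\ref{prop:general-local}, gives \(d\le C_H\theta^{3}(\xi+d^{3/2})\). Since no composition is used, only one factor of \(\gamma^{-1}\) appears, and \(\theta\) enters to the power \(3\) rather than \(6\); this yields \(L\le c_Hp^{-3m}\).
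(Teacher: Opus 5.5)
Up to your displayed identity, your skeleton matches the paper's Lemma~\ref{lem:tworoot}. You expand $M_H$ and discard the non-star graphs with Lemma~\ref{lem:nonstar}. You then bound every star placement by degree quantities, except the endpoint placements of single edges and the leaf placements of two-edge stars, which give $U$ and a multiple of $U^2$. The gap is the step you flag yourself: removing the cherry term. Positivity of $\norm{T\one_S}{2}^2$ yields only the upper bound on $q(S)$. The bound you propose for the other direction, $\norm{T\one_S}{2}^2\le C(A_2+d^{3/2})$, is false. For a counterexample, take $\mu(X)=\alpha$ small and a sign function $s$ balanced on $X$ and on $\Omega\setminus X$. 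Set $U(x,y)=s(y)$ for $x\in X$, $y\notin X$, set $U(x,y)=s(x)$ for $x\notin X$, $y\in X$, and $U=0$ otherwise (multiply by $\min\{p,1-p\}$ to make $U=W-p$). Then $d\asymp\alpha$, $A_2\le\alpha^2$ and $\tr(T^4)=O(\alpha^2)$. But for $S=\{s=1\}\setminus X$ one gets $T\one_S=\frac{1-\alpha}{2}\one_X$, so $\norm{T\one_S}{2}^2\asymp d$. Lemma~\ref{lem:fourthtrace} cannot rescue this, since it controls only $\norm{U^2}{L^2(\mu^2)}\le Cd^{3/4}$. The cherry is genuinely of linear order, so it has to be cancelled against $U$ using the hypothesis rather than estimated on its own.

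The missing idea is the paper's exact cancellation. Your display, tested on all $S$, already gives two-sided control of $\int_{S\times S}V$ for the symmetric kernel $V=U+\kappa U^2$, and hence of $\cut{V}$ up to a factor $2$. Associativity gives $U=V-\kappa VU+\kappa^2U^3$. Lemma~\ref{lem:operators} gives $\cut{VU}\le4\cut{V}$ and $\cut{U^3}\le\norm{T^3}{\infty\to1}\le16d^2$. Hence $d\le(1+4\kappa)\cut{V}+16\kappa^2d^2$, with no sign information needed. This is the precise form of your ``$P_4$ after one more substitution'': it bounds $\kappa U^2=\kappa VU-\kappa^2U^3$ in terms of $V$, not absolutely.

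Separately, your exponent count does not deliver $p^{-3m}$. You bound residual stars by $A_2+\sqrt{A_2d}$ with $A_2\le C_H\theta^2(\xi+d^2)$, which forces a Young step $p^{-1}\theta^2\sqrt{\xi d}\le d/4+C_Hp^{-2}\theta^4\xi$. That is $p^{-(4m-2)}$, which exceeds $p^{-3m}$ once $m\ge3$. The paper avoids this by bounding every residual star placement by $\norm{a}{1}$. It then proves $\norm{a}{1}\le C_Hp^{-(2m-1)}(\xi+d^{3/2})$ linearly in Lemma~\ref{lem:degree}, via the analogous identity $a=h-\kappa Th+\kappa^2T^3\one$.
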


Throughout this section, let \(H\) be triangle-free,
with \(r=v(H)\) and \(m=e(H)\ge1\).
As before, write \(U=W-p\), \(T=T_U\), \(d=\cut{U}\),
\(\xi=\Def_{H,p}(W)\), and \(a=T\one\).
Set \(\mathcal E=\xi+d^{3/2}\).
For integers \(k\ge1\), define
\(A_k=\int_\Omega a^k\,d\mu\), and write
\(\mu_1=A_1\).

To organize the star terms, put
\(s_k=\sum_{v\in V(H)}\binom{\deg_H(v)}{k}\) for \(k\ge1\),
and set \(\kappa=s_2/(mp)\).
For \(k\ge2\), a \(k\)-edge star has a unique centre,
so \(s_k\) counts the edge sets of \(H\) that form such
a star. In particular, \(s_k=0\) for \(k>\Delta(H)\).

The following elementary bounds will be used repeatedly:
\begin{equation}\label{eq:abasic}
 \begin{gathered}
 -p\le a\le1-p,\qquad
 \norm{a}{1}\le4d,\qquad
 |\mu_1|\le d,\\
 |A_k|\le\norm{a}{1}\qquad(k\ge1).
 \end{gathered}
\end{equation}
The range of \(a\) follows from \(0\le W\le1\), and
Lemma~\ref{lem:operators} gives
\(\norm{a}{1}=\norm{T\one}{1}\le4d\).
The bound on \(\mu_1\) follows by testing the cut norm
on \(\Omega\times\Omega\), while the last inequality
uses \(|a|\le1\).

For \(J\subseteq E(H)\), retain the notation
\(F_J=(V(H),J)\), including all isolated vertices.
Expanding each edge factor as \(W=p+U\) gives
\[
 t_v(H,W;x)-p^m
 =
 \sum_{\varnothing\ne J\subseteq E(H)}
 p^{m-|J|}t_v(F_J,U;x).
\]
For distinct roots \(v,w\), the same expansion holds
with \(t_v(F_J,U;x)\) replaced by
\(t_{v,w}(F_J,U;x,y)\).
We shall group these expansions according to whether
\(F_J\) is a star with isolated vertices.

We begin with the degree terms that will control the remaining
star contributions in the two-root expansion.

\begin{lemma}\label{lem:degree}
There exists a constant \(C_H>0\), depending only on \(H\),
such that
\[
 A_2+|\mu_1|+\norm{a}{1}
 \le C_Hp^{-(2m-1)}\mathcal E.
\]
\end{lemma}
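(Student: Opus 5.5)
The plan is to extract everything from the one-root identity \eqref{eq:generalrootidentity}, \(rR_H=\Phi_H(a)+\mathcal Q\), using the triangle-free structure to sort the terms of \(\mathcal Q\). For \(J\nsubseteq E_H(v)\) there are three kinds of terms.

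(i) If \(F_J\) is not a star with isolated vertices, Lemma~\ref{lem:nonstar} gives \(\norm{t_v(F_J,U;\cdot)}{1}\le C_0d^{3/2}\).

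(ii) If \(F_J\) is a \(k\)-edge star whose centre \(u\) differs from \(v\) and \(v\) is a leaf, then \(t_v(F_J,U;x)=(Ta^{k-1})(x)\).

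(iii) If \(v\) is not in the star, the term is the constant \(A_k\). For \(k=1\) this is \(\mu_1\).

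Collecting coefficients, the \(k=2\) terms of type (ii) contribute \(2s_2p^{m-2}\,Ta=2mp^{m-1}\kappa\,Ta\). The linear part of \(\Phi_H(a)\) is \(2mp^{m-1}a\), and its remainder is pointwise \(O_H(a^2)\). So I would write
\[
 2mp^{m-1}\bigl(a+\kappa Ta\bigr)
 =rR_H-\mathcal E_1,
\]
where \(\mathcal E_1\) collects the nonlinear part of \(\Phi_H\), the type (i) terms, the constants \(A_k\) (including \(\mu_1\)), and the terms \(Ta^{k-1}\) with \(k\ge3\).

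\emph{Step 1: \(A_2\).} I redo Lemma~\ref{lem:generaldegree}, multiplying by \(a\) and using \(\norm{a}{\infty}\le1\) with \(L^1\) bounds instead of the \(L^2\) bound on \(\mathcal Q\). Type (i) terms cost \(C_0d^{3/2}\). Constant terms cost \(|A_k\mu_1|\le d\norm{a}{1}\le4d^2\). For type (ii), \(\int a\,T(a^{k-1})=\int a^{k-1}(Ta)\), which is at most \(\norm{a^{k-1}}{1}\norm{Ta}{\infty}\le\norm{a}{1}^2\le16d^2\), since \(\norm{T}{1\to\infty}\le1\). The coercivity \eqref{eq:generalcoercive} and Corollary~\ref{cor:rooted} then give \(A_2\le C_Hp^{-(m-1)}\mathcal E\).

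\emph{Step 2: \(\mu_1\).} I integrate the identity against \(\one\). The linear terms \(a\) and \(\mu_1\) together produce \(c\,p^{m-1}\mu_1\) with \(c\ge 2m>0\), all with the same sign. This counts the single edges containing the root and those not containing it. Everything else is \(O_H(A_2+d^{3/2})\), because \(\int Ta^{k-1}=\int a\,a^{k-1}\) is bounded by \(A_2\) for \(k\ge2\), and \(|A_k|\le A_2\) for \(k\ge2\). Hence \(|\mu_1|\le C_Hp^{-(m-1)}(\xi+A_2+d^{3/2})\le C_Hp^{-2(m-1)}\mathcal E\).

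\emph{Step 3: \(\norm{a}{1}\), the main obstacle.} The troublesome term is \(\kappa\,Ta\). Its naive \(L^1\) bound is either \(\norm{a}{1}\), which cannot be absorbed since \(\kappa\sim p^{-1}\), or \(2\sqrt{dA_2}\), which is not linear in \(\mathcal E\). I would instead invert \(I+\kappa T\) by hand. Put \(g=a+\kappa Ta\). From the display, \(\norm{g}{1}\le C_Hp^{-(m-1)}\bigl(\xi+A_2+|\mu_1|+d^{3/2}\bigr)\), where I use \(\norm{a^2}{1}=A_2\), \(\norm{Ta^{k-1}}{1}\le\norm{a^{k-1}}{1}\le A_2\) for \(k\ge3\), and \(\norm{R_H}{1}\le2b_r\xi\). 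Applying \(T\) gives \(Ta=Tg-\kappa T^2a\). Here \(\norm{Tg}{1}\le\norm{g}{1}\) because \(\norm{T}{1\to1}\le1\). Also \(\norm{T^2a}{1}\le\norm{T}{\infty\to1}\norm{Ta}{\infty}\le4d\norm{a}{1}\le16d^2\). Hence \(\norm{a}{1}\le\norm{g}{1}+\kappa\norm{Ta}{1}\le(1+\kappa)\norm{g}{1}+16\kappa^2d^2\).

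Since \(\kappa\le C_Hp^{-1}\), inserting Steps 1 and 2 yields \(\norm{a}{1}\le C_Hp^{-1}\cdot p^{-(m-1)}\cdot p^{-(m-1)}\mathcal E\) up to lower-order powers of \(p^{-1}\). The \(d^2\) term is harmless, since \(d^2\le d^{3/2}\) and \(\kappa^2\le C_Hp^{-2}\le C_Hp^{-(2m-1)}\) when \(m\ge2\). If \(m=1\), then \(s_2=0\), so \(\kappa=0\) and this term is absent. The exponent \(2m-1\) then comes from combining the \(p^{-m}\) factor with Step 1, and I would check the bookkeeping of powers of \(p\) carefully at this point. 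Summing the three bounds proves the lemma.
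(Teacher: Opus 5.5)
Your proposal is essentially the paper's proof. It uses the same star decomposition of the one-root expansion, coercivity of \(\Phi_H\) for \(A_2\), integration against \(\one\) for \(\mu_1\), and the exact inversion of \(I+\kappa T\) with remainder \(\kappa^2T^2a=\kappa^2T^3\one\) of size at most \(16\kappa^2d^2\).

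One correction is needed in the powers of \(p\) in Step 3. As displayed, your bound \(\norm{g}{1}\le C_Hp^{-(m-1)}(\xi+A_2+\lvert\mu_1\rvert+d^{3/2})\), combined with \(\lvert\mu_1\rvert\le C_Hp^{-2(m-1)}\mathcal E\), only gives \(\norm{a}{1}\le C_Hp^{-(3m-2)}\mathcal E\). That is worse than the stated \(p^{-(2m-1)}\) for \(m\ge2\). The fix is to track the coefficient of \(\mu_1\). The \(\mu_1\) terms enter with coefficient \(m(r-2)p^{m-1}\), which is the same power as the normalizing factor \(2mp^{m-1}\). They therefore contribute only \(\tfrac{r-2}{2}\lvert\mu_1\rvert\) to \(\norm{g}{1}\), with no negative power of \(p\). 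This gives \(\norm{g}{1}\le C_Hp^{-2(m-1)}\mathcal E\), and multiplying by \(1+\kappa\le C_Hp^{-1}\) yields the stated exponent, exactly as in the paper.
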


\begin{proof}
We first group the one-root expansion according to its
star terms. By Lemma~\ref{lem:nonstar}, the remaining
terms have total \(L^1\) norm at most \(C_Hd^{3/2}\):
each corresponding graph is triangle-free and is not
a star with isolated vertices, and all coefficients
\(p^{m-|J|}\) are at most one.

A single edge contributes \(a\) when the root is an
endpoint and \(\mu_1\) otherwise.
For a \(k\)-edge star with \(k\ge2\), a root at its
centre, a leaf, or an isolated vertex contributes
\(a^k\), \(T(a^{k-1})\), or \(A_k\), respectively.
Summing over the roots and the star edge sets gives
\begin{equation}\label{eq:staridentity}
 \begin{split}
 rR_H={}&mp^{m-1}\bigl(2a+(r-2)\mu_1\bigr)\\
 &+\sum_{k=2}^{\Delta(H)}s_kp^{m-k}
 \bigl(a^k+kT(a^{k-1})+(r-k-1)A_k\bigr)
 +\mathcal R_1,
 \end{split}
\end{equation}
where \(\norm{\mathcal R_1}{1}\le C_Hd^{3/2}\).
All sums below are understood to be empty when
\(\Delta(H)=1\).

The polynomial from Lemma~\ref{lem:generaldegree} can
be written as
\[
 \Phi_H(t)
 =2mp^{m-1}t+
   \sum_{k=2}^{\Delta(H)}s_kp^{m-k}t^k.
\]
Thus \eqref{eq:staridentity} takes the form
\(rR_H=\Phi_H(a)+B(a)+c+\mathcal R_1\), where
\[
 \begin{aligned}
 B(a)
 &=\sum_{k=2}^{\Delta(H)}
   ks_kp^{m-k}T(a^{k-1}),\\
 c
 &=m(r-2)p^{m-1}\mu_1+
   \sum_{k=2}^{\Delta(H)}
   (r-k-1)s_kp^{m-k}A_k.
 \end{aligned}
\]
Using \(\norm{Tf}{\infty}\le\norm{f}{1}\),
\(|a|\le1\), and \eqref{eq:abasic}, we obtain
\(\norm{B(a)}{\infty}+|c|\le C_Hd\).

Multiply this identity by \(a\) and integrate.
Corollary~\ref{cor:rooted} and \(\norm{a}{\infty}\le1\)
give
\(\bigl|r\int aR_H\,d\mu\bigr|\le2rb_r\xi\).
The contribution of \(B(a)+c\) is at most
\(C_Hd\norm{a}{1}\le C_Hd^2\), and that of
\(\mathcal R_1\) is at most \(C_Hd^{3/2}\).
Applying \eqref{eq:coercivity} and using
\(d^2\le d^{3/2}\), we conclude that
\begin{equation}\label{eq:secondmoment}
 A_2\le C_Hp^{-(m-1)}\mathcal E.
\end{equation}

Next, integrate \eqref{eq:staridentity} without the
factor \(a\).
Self-adjointness of \(T\) and \(T\one=a\) imply
\(\int T(a^{k-1})\,d\mu=A_k\).
The root-placement coefficients sum to \(r\) for
each star, so
\[
 \left|
 mp^{m-1}\mu_1+
 \sum_{k=2}^{\Delta(H)}s_kp^{m-k}A_k
 \right|
 \le C_H\mathcal E.
\]
Since \(|A_k|\le A_2\) for \(k\ge2\) and
\(p^{m-k}\le1\), it follows that
\begin{equation}\label{eq:meancontrol}
 |\mu_1|
 \le C_Hp^{-(m-1)}(\mathcal E+A_2)
 \le C_Hp^{-2(m-1)}\mathcal E.
\end{equation}

It remains to control \(\norm{a}{1}\).
In \eqref{eq:staridentity}, retain the terms
\(2mp^{m-1}a\) and \(2s_2p^{m-2}Ta\).
Every other term from a star with at least two edges
has \(L^1\) norm bounded by its coefficient times
\(A_2\). Indeed,
\(\norm{a^k}{1}\le A_2\) for \(k\ge2\), and
\(\norm{T(a^{k-1})}{1}\le\norm{a^{k-1}}{1}\le A_2\)
for \(k\ge3\).

After division by \(2mp^{m-1}\), the single-edge
constant has coefficient \((r-2)/2\).
Each remaining star coefficient is at most
\(C_Hp^{1-k}\le C_Hp^{-(m-1)}\), since \(k\le m\).
Recalling \(\kappa=s_2/(mp)\), we therefore obtain
\[
 \begin{aligned}
 \norm{a+\kappa Ta}{1}
 &\le C_Hp^{-(m-1)}(\mathcal E+A_2)
      +\frac{r-2}{2}|\mu_1|\\
 &\le C_Hp^{-2(m-1)}\mathcal E,
 \end{aligned}
\]
by \eqref{eq:secondmoment} and \eqref{eq:meancontrol}.

Set \(h=a+\kappa Ta\).
Since \(a=T\one\), applying \(T\) and rearranging gives
the exact identity
\(a=h-\kappa Th+\kappa^2T^3\one\).
Lemma~\ref{lem:operators} consequently yields
\[
 \norm{a}{1}
 \le(1+\kappa)\norm{h}{1}+16\kappa^2d^2.
\]
If \(m\ge2\), then \(\kappa\le C_Hp^{-1}\),
\(d^2\le\mathcal E\), and \(2m-1\ge2\). Hence
\[
 \norm{a}{1}
 \le C_Hp^{-(2m-1)}\mathcal E
      +C_Hp^{-2}\mathcal E
 \le C_Hp^{-(2m-1)}\mathcal E.
\]
If \(m=1\), then \(s_2=\kappa=0\), so the preceding
bound on \(h=a\) gives the same conclusion.

Finally, \eqref{eq:secondmoment} and
\eqref{eq:meancontrol} have no larger power of \(p^{-1}\)
than \(2m-1\). Adding the three estimates and enlarging
\(C_H\) proves the lemma.
\end{proof}

With the degree terms controlled, we turn to the averaged
two-root density and retain its two principal star terms.

\begin{lemma}\label{lem:tworoot}
There exists a constant \(C_H>0\), depending only on \(H\),
such that
\[
 \cut{U+\kappa U^2}
 \le C_Hp^{-(3m-2)}\mathcal E,
 \qquad
 \kappa=\frac{s_2}{mp}.
\]
Here \(U^2\) denotes the composition kernel.
\end{lemma}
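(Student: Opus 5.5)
We expand the averaged two-root density \(M_H\) exactly as in \eqref{eq:general-two-root}, but now group the terms with \(|J|\ge2\) according to whether \(F_J\) is a star with isolated vertices. Since \(H\) is triangle-free, every \(F_J\) is triangle-free. By Lemma~\ref{lem:nonstar}, every non-star term is a two-root density of cut norm at most \(C_0d^{3/2}\). Its coefficient is \(p^{m-|J|}\le1\), so these terms together contribute a remainder of cut norm at most \(C_Hd^{3/2}\le C_H\mathcal E\).

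The single-edge terms give \(\gamma\mathcal L_rU\), with \(\gamma=2mp^{m-1}/[r(r-1)]\). For a \(k\)-edge star, \(k\ge2\), we list the two-root densities by the placement of the ordered roots.
\begin{itemize}
\item Both roots are leaves: this gives \(a^{k-2}\) at a centre integrated between them. For \(k=2\) this is exactly \(U^2(x,y)\); for \(k\ge3\) it is \(\int U(x,z)a(z)^{k-2}U(z,y)\,d\mu(z)\).
\item The centre and a leaf are the roots: this gives \(U(x,y)a(x)^{k-1}\) or its transpose.
\item The centre and an isolated vertex are the roots: this gives \(a(x)^k\).
\item A leaf and an isolated vertex are the roots: this gives \(T(a^{k-1})(x)\).
\item Both roots are isolated: this gives the constant \(A_k\).
\end{itemize}
Every term other than the \(k=2\) leaf--leaf term will be bounded in cut norm by \(C(A_2+\norm{a}{1}+\sqrt{A_2d})\).
\begin{itemize}
\item Centre--leaf terms: testing against \(\one_A\otimes\one_B\) and bounding \(|U|\le1\) gives at most \(\int|a|^{k-1}\le\norm{a}{1}\). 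For \(k=2\) this is the only available estimate; for \(k\ge3\) it is at most \(A_2\).
\item Leaf--leaf terms with \(k\ge3\): these are handled as in Lemma~\ref{lem:largestars}. For \(k\ge4\) the bound is \(\int|a|^{k-2}\le A_2\), and for \(k=3\) it is \(2\sqrt{A_2d}\).
\item Terms involving an isolated root: these are bounded by \(\norm{a}{1}\), \(A_2\) or \(|A_k|\).
\end{itemize}
Counting placements, the total coefficient of \(U^2\) in \(r(r-1)M_H\) is \(2s_2p^{m-2}\).

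This yields
\[
 M_H=\gamma\bigl(\mathcal L_rU+\kappa U^2\bigr)+\mathcal R_2,
\]
since \(2s_2p^{m-2}/[r(r-1)\gamma]=s_2/(mp)=\kappa\). Here
\[
 \cut{\mathcal R_2}\le C_H\bigl(d^{3/2}+A_2+\norm{a}{1}+\sqrt{A_2d}\bigr).
\]
Next we apply \(\gamma^{-1}\mathcal P_r\) and use Lemma~\ref{lem:marginalinverse}. This gives
\[
 U+\kappa\mathcal P_rU^2=\gamma^{-1}\mathcal P_r(M_H-\mathcal R_2).
\]
The main obstacle is that \(\mathcal P_r\) acts nontrivially on \(U^2\). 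However, \(\mathcal P_rU^2-U^2\) is a combination of the marginals \(\mathcal R_xU^2=\mathcal R_yU^2\)-type terms, namely \(Ta\) in one variable, and of \(\mathcal JU^2=\int a^2=A_2\). Their cut norms are at most \(\norm{Ta}{1}\le\norm{a}{1}\) and \(A_2\), so this discrepancy is absorbed into the same error and the equation holds with \(U^2\) in place of \(\mathcal P_rU^2\). Note that \(\kappa\le C_Hp^{-1}\).

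Combining Corollary~\ref{cor:rooted}, which gives \(\cut{M_H}\le b_r\xi\), with \(\gamma^{-1}\le C_Hp^{-(m-1)}\), we obtain
\[
 \cut{U+\kappa U^2}\le C_Hp^{-(m-1)}\bigl(\xi+d^{3/2}+A_2+\norm{a}{1}+\sqrt{A_2d}\bigr)+C_Hp^{-1}\bigl(A_2+\norm{a}{1}\bigr).
\]
Lemma~\ref{lem:degree} controls \(A_2+\norm{a}{1}\le C_Hp^{-(2m-1)}\mathcal E\), and \(\sqrt{A_2d}\le\tfrac12(A_2+d)\) would be too weak. We therefore use \(\sqrt{A_2d}\le C_Hp^{-(m-1/2)}\sqrt{\mathcal E d}\), which is not of order \(\mathcal E\) by itself.

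The step I expect to be delicate is exactly this \(k=3\) leaf--leaf term. I would first try to avoid the crude \(\sqrt{A_2d}\) bound by writing \(\int a\,(Tg)(Th)\). Here \(Tg\) and \(Th\) are themselves signed, and one can estimate via \(\norm{a}{2}\norm{T}{2\to2}\) on the product. Alternatively, one can note that in the final result only \(\mathcal E=\xi+d^{3/2}\) is needed, and \(\sqrt{A_2d}\le C\sqrt{\mathcal E d}\) with \(\mathcal E\ge d^{3/2}\) gives \(\sqrt{\mathcal E d}\le\mathcal E^{1/2}d^{1/2}\), which still exceeds \(\mathcal E\). If neither works, I will instead bound the \(k=3\) term through the triangle-free-free structure: a \(3\)-star with two leaf roots is the \(P_3\)-with-pendant pattern, and after fixing the root variables I would group it with the centre-root terms via \(\norm{a}{1}\), since \(|Tg|\le1\) gives \(\int|a|\,|Tg|\,|Th|\le\norm{a}{1}\le C_Hp^{-(2m-1)}\mathcal E\). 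This last bound indeed suffices and removes the square root entirely.

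Tracking powers, the worst term is \(p^{-(m-1)}\cdot p^{-(2m-1)}=p^{-(3m-2)}\), as claimed.
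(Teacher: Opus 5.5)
Your argument is correct and is essentially the paper's. You expand \(r(r-1)M_H\), discard the non-star terms via Lemma~\ref{lem:nonstar}, retain \(2mp^{m-1}U+2s_2p^{m-2}U^2\), bound every other star placement by \(\norm{a}{1}\) or \(A_2\), and finish with Corollary~\ref{cor:rooted} and Lemma~\ref{lem:degree}. The bound \(\int|a|\,|Tg|\,|Th|\,d\mu\le\norm{a}{1}\) that you finally use for the \(k=3\) leaf--leaf term is exactly the paper's uniform device: integrate a non-root leaf to produce a factor \(a\) at the centre. The \(\sqrt{A_2d}\) detour, and the two attempted fixes that you yourself note fail, should simply be dropped.

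The one genuine difference is how you handle the single-edge marginals. You invert them with \(\mathcal P_r\) and then pay \(\kappa\cut{\mathcal P_rU^2-U^2}\le C_Hp^{-1}(\norm{a}{1}+A_2)\le C_Hp^{-2m}\mathcal E\). This is harmless, since \(2m\le 3m-2\) for \(m\ge2\) and \(\kappa=0\) when \(m=1\). The paper does not use \(\mathcal P_r\) here at all; it absorbs the placements \(a(x)\), \(a(y)\), \(\mu_1\) directly into the same \(\norm{a}{1}\) error.
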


\begin{proof}
Expand \(r(r-1)M_H\) over all nonempty edge sets and
ordered pairs of distinct roots.
Every graph \(F_J\) in this expansion is triangle-free.
By Lemma~\ref{lem:nonstar}, the total contribution from
graphs that are not stars with isolated vertices has
cut norm at most \(C_Hd^{3/2}\).
Here and below, the number of terms depends only on
\(H\), and every coefficient \(p^{m-|J|}\) is at most one.

We retain two types of star terms.
For a single edge, placing the roots at its endpoints
gives \(U\) in each of the two possible orders.
Summing over the \(m\) edges gives
\(2mp^{m-1}U\).
For a two-edge star, placing the roots at its two leaves
and integrating the centre gives \(U^2\).
There are \(s_2\) such edge sets and two orders of the
leaf roots, giving \(2s_2p^{m-2}U^2\).

We claim that every remaining rooted star density has
cut norm at most \(\norm{a}{1}\).
First consider a star with at least two edges.
Unless it is a two-edge star with both leaves as roots,
at least one leaf is not a root.
To estimate a cut integral, place indicators
\(\one_A,\one_B\) at the two roots and integrate the
variable at such a leaf.
This produces a factor \(a\) at the centre.
All remaining edge factors and indicators have absolute
value at most one, so taking absolute values and
integrating the remaining variables bounds the cut
integral by \(\int_\Omega|a|\,d\mu\).
This argument also applies when the centre is a root.

For a single edge, all placements other than the retained
endpoint pair have at least one isolated root.
If exactly one root is an endpoint, the resulting kernel
is \(a(x)\) or \(a(y)\); if both roots are isolated, it is
the constant \(\mu_1\).
Their cut norms are at most \(\norm{a}{1}\), since
\(|\mu_1|\le\norm{a}{1}\).
Thus no choice of a centre for a single edge is needed.

Collecting the retained terms and the remainder gives
\begin{equation}\label{eq:twoidentity}
 r(r-1)M_H
 =2mp^{m-1}U+2s_2p^{m-2}U^2+\mathcal R_2,
\end{equation}
where
\(\cut{\mathcal R_2}
\le C_H(d^{3/2}+\norm{a}{1})\).
When \(m=1\), we have \(s_2=\kappa=0\), so the
\(U^2\) term is absent.

Dividing \eqref{eq:twoidentity} by \(2mp^{m-1}\)
and applying Corollary~\ref{cor:rooted} yields
\[
 \cut{U+\kappa U^2}
 \le C_Hp^{-(m-1)}
       \bigl(\xi+d^{3/2}+\norm{a}{1}\bigr).
\]
By Lemma~\ref{lem:degree},
\(\norm{a}{1}\le C_Hp^{-(2m-1)}\mathcal E\).
Since \(\mathcal E=\xi+d^{3/2}\) and \(0<p<1\),
we conclude that
\[
 \cut{U+\kappa U^2}
 \le C_Hp^{-(m-1)}p^{-(2m-1)}\mathcal E
 =C_Hp^{-(3m-2)}\mathcal E.
 \qedhere
\]
\end{proof}

The two-root estimate now completes the proof by an exact
cancellation of the quadratic composition term.

\begin{proof}[Proof of Proposition~\ref{prop:local}]
Set \(V=U+\kappa U^2\).
By associativity of kernel composition,
\[
 U=V-\kappa VU+\kappa^2U^3.
\]
Lemma~\ref{lem:operators} gives
\(\cut{VU}\le4\cut{V}\), while \eqref{eq:cutdual}
and the same lemma imply
\(\cut{U^3}\le\norm{T^3}{\infty\to1}\le16d^2\).
Since \(\kappa\ge0\), the triangle inequality yields
\[
 d\le(1+4\kappa)\cut{V}+16\kappa^2d^2.
\]

By Lemma~\ref{lem:tworoot} and
\(1+4\kappa\le C_Hp^{-1}\), the first term is at most
\(C_Hp^{-(3m-1)}\mathcal E\).
If \(m\ge2\), then \(\kappa\le C_Hp^{-1}\) and
\(d^2\le d^{3/2}\le\mathcal E\), so
\[
 16\kappa^2d^2
 \le C_Hp^{-2}\mathcal E
 \le C_Hp^{-(3m-1)}\mathcal E.
\]
If \(m=1\), then \(\kappa=0\), and this term vanishes.
Consequently,
\[
 d\le C_Hp^{-(3m-1)}(\xi+d^{3/2})
   \le C_Hp^{-3m}(\xi+d^{3/2}).
\]
Choosing \(c_H\ge\max\{1,C_H\}\) and
\(L(H,p)=c_Hp^{-3m}\) proves the proposition.
\end{proof}

\begin{remark}\label{rem:girthfive}
Suppose, in addition to being triangle-free, that \(H\)
contains no four-cycle.
Every spanning subgraph \(F_J\) inherits both properties,
so the final assertion of Lemma~\ref{lem:nonstar} bounds
all nonstar contributions by \(C_Hd^2\).
The remaining error terms in the preceding proofs are
also controlled with \(\mathcal E=\xi+d^2\), with the
same dependence on \(p\).
Thus the argument gives
\[
 \cut{W-p}
 \le c_Hp^{-3m}
 \bigl(\Def_{H,p}(W)+\cut{W-p}^2\bigr).
\]
In particular, this estimate applies to every fixed
forest with at least one edge and to every fixed graph
with at least one edge and girth at least five.
\end{remark}

\section{Sharpness and parameter dependence}\label{sec:remarks}

Perturbing the constant density shows that linear
dependence on the hereditary counting error is optimal,
as already observed in~\cite{CFS}.
For the constant kernel \(W_q\equiv q\),
\(\cut{W_q-p}=|q-p|\) and
\(\Def_{H,p}(W_q)=|q^m-p^m|\).
The following finite-graph formulation identifies the
asymptotic ratio and ensures that the counting-conversion
error is negligible.

\begin{proposition}[Sharpness]\label{prop:sharpness}
Let \(H\) be a fixed graph with \(m\ge1\) edges, and let
\(p\in(0,1)\). There exist graphs \(G_j\) of strictly
increasing orders \(n_j\) and numbers \(\delta_j>0\),
with \(\delta_j\to0\), such that \(G_j\) satisfies
\(\Pstar_{H,p}(\delta_j)\), \(n_j^{-1}=o(\delta_j)\), and
\[
 \lim_{j\to\infty}
 \frac{\Disc_p(G_j)}{\delta_j}
 =\frac1{mp^{m-1}}.
\]
\end{proposition}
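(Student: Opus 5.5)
Our plan is to use near-constant random-like graphs of density slightly different from \(p\), namely pseudorandom graphs whose edge density is \(q_j=p+\eta_j\) with \(\eta_j\to0\) slowly compared with \(n_j^{-1/2}\). Concretely, we take \(G_j\) to be a graph on \(n_j\) vertices with discrepancy from the constant kernel \(q_j\) of order \(n_j^{-1/2}\). A random graph \(G(n_j,q_j)\) will work after conditioning on a high-probability event, or one may take a blow-up of a Paley-type construction. The standard estimates we need are \(\cut{W_{G_j}-q_j}=O(n_j^{-1/2})\), and, for every \(S\), \(|N_H(G_j[S])-q_j^m|S|^r|=O(n_j^{r-1/2})\). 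The second follows from the first by Lemma~\ref{lem:counting}, with the injectivity correction \(O(n_j^{r-1})\) handled as in Lemma~\ref{lem:weighted}. We choose \(\eta_j=n_j^{-1/4}\), say. Then \(n_j^{-1/2}=o(\eta_j)\), and \(n_j^{-1}=o(\eta_j)\) as well.

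Next we compute both sides. For the discrepancy, testing on \(A=B=V(G_j)\) gives a lower bound \(\Disc_p(G_j)\ge|q_j-p|-O(n_j^{-1/2})\), after accounting for the zero diagonal, which costs \(O(1/n_j)\). The triangle inequality \(\cut{W-p}\le\cut{W-q_j}+|q_j-p|\) gives the matching upper bound. Hence \(\Disc_p(G_j)=\eta_j(1+o(1))\). For the counting error, we set \(\delta_j=\max_S n_j^{-r}|N_H(G_j[S])-p^m|S|^r|\), which is the minimal admissible value. It satisfies \(\delta_j\le |q_j^m-p^m|+O(n_j^{-1/2})\), by the estimate above and \(|S|\le n_j\). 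Taking \(S=V(G_j)\) gives the reverse inequality \(\delta_j\ge|q_j^m-p^m|-O(n_j^{-1/2})\). Since \(q_j^m-p^m=mp^{m-1}\eta_j(1+o(1))\), we obtain \(\delta_j=mp^{m-1}\eta_j(1+o(1))\). The ratio therefore tends to \(1/(mp^{m-1})\), with \(\delta_j\to0\) and \(n_j^{-1}=o(\delta_j)\). We choose \(n_j\) strictly increasing.

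The main obstacle is to make the existence of \(G_j\) with \(\cut{W_{G_j}-q_j}=O(n_j^{-1/2})\) rigorous without leaning on external results. We would first try the random graph \(G(n,q)\). For fixed \(A,B\), the quantity \(e(A,B)\) is a sum of at most \(n^2\) bounded independent terms, up to the factor \(2\) on the diagonal part \(A\cap B\). Hoeffding's inequality then gives deviation at most \(t\) with failure probability \(2\exp(-ct^2/n^2)\). A union bound over the \(4^n\) pairs \((A,B)\) with \(t=Cn^{3/2}\) yields \(\cut{W-q}\le Cn^{-1/2}\) with positive probability. This gives the required bound, and the rest is the bookkeeping described above.
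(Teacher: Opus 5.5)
Your proposal is correct and follows essentially the same route as the paper: random graphs \(G(n,q_j)\) with \(q_j=p+t_j\), Hoeffding plus a union bound over the \(4^n\) pairs, the counting lemma against the constant kernel \(q_j\) with the injectivity correction, the minimal admissible \(\delta_j\) bounded above uniformly in \(S\) and below by testing \(S=V(G_j)\), and the reverse triangle inequality for \(\Disc_p\). The only difference is cosmetic: the paper fixes \(t_j\) first and then takes \(n_j\) large enough that \(\cut{W_j-q_j}\le t_j^2\) and \(n_j^{-1}\le t_j^2\), whereas you couple \(\eta_j=n_j^{-1/4}\) to \(n_j\) and use the quantitative bound \(\cut{W-q}=O(n^{-1/2})\); this gives the same \(O(\eta_j^2)\) error.
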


\begin{proof}
Write \(r=v(H)\).
Choose \(t_j\downarrow0\) with \(0<t_j<1-p\), and put
\(q_j=p+t_j\).
We first choose graphs \(G_j\), of strictly increasing
orders \(n_j\), whose adjacency kernels \(W_j\) satisfy
\(\rho_j:=\cut{W_j-q_j}\le t_j^2\) and
\(n_j^{-1}\le t_j^2\).

To justify this choice, consider \(G(n,q)\).
For fixed \(A,B\subseteq[n]\), the ordered edge count
\(e_G(A,B)\) is a sum of independent Bernoulli variables
with coefficients in \(\{0,1,2\}\), and
\[
 \mathbb E e_G(A,B)
 =q\bigl(|A||B|-|A\cap B|\bigr).
\]
Its expectation therefore differs from \(q|A||B|\)
by at most \(n\).
The sum of the squared coefficients is at most
\(4\binom n2\le2n^2\).
For \(n\ge2/\varepsilon\), \rev{Hoeffding's inequality~\cite{Hoeffding}} and
a union bound over the at most \(4^n\) ordered pairs
\(A,B\) give
\[
 \Pr\bigl(\Disc_q(G(n,q))>\varepsilon\bigr)
 \le2\cdot4^n\exp(-\varepsilon^2n^2/4).
\]
For each fixed \(\varepsilon>0\), this bound tends to
zero as \(n\to\infty\), uniformly in \(q\in(0,1)\).
Taking \(\varepsilon=t_j^2\), we may therefore choose
\(n_j>n_{j-1}\) sufficiently large and select a realization
with the required properties.

Define the smallest admissible hereditary error by
\[
 \delta_j
 =
 \frac1{n_j^r}
 \max_{S\subseteq V(G_j)}
 \bigl|N_H(G_j[S])-p^m|S|^r\bigr|.
\]
Then \(G_j\) satisfies \(\Pstar_{H,p}(\delta_j)\)
by definition. Taking \(S\) to be a singleton gives
\(\delta_j\ge p^m/n_j^r>0\).

Apply Lemma~\ref{lem:counting} with the constant kernel
\(q_j\) and all vertex weights equal to \(\one_S\).
After accounting for noninjective maps, we obtain,
uniformly over \(S\subseteq V(G_j)\),
\[
 \left|
 \frac{N_H(G_j[S])}{n_j^r}
 -q_j^m\left(\frac{|S|}{n_j}\right)^r
 \right|
 \le4m\rho_j+\frac{\binom r2}{n_j}
 =:\eta_j.
\]
Since \(q_j>p\), this gives
\(\delta_j\le q_j^m-p^m+\eta_j\).
Taking \(S=V(G_j)\) also gives
\(\delta_j\ge q_j^m-p^m-\eta_j\).
As \(\eta_j=O_H(t_j^2)\), it follows that
\[
 \delta_j
 =q_j^m-p^m+O_H(t_j^2)
 =mp^{m-1}t_j+O_{H,p}(t_j^2).
\]

The reverse triangle inequality for the cut norm,
together with \(\cut{q_j-p}=t_j\), yields
\(\bigl|\Disc_p(G_j)-t_j\bigr|\le\rho_j\le t_j^2\).
Consequently, \(\delta_j\to0\) and
\[
 \frac{\Disc_p(G_j)}{\delta_j}
 =
 \frac{t_j+O(t_j^2)}
      {mp^{m-1}t_j+O_{H,p}(t_j^2)}
 \longrightarrow\frac1{mp^{m-1}}.
\]
Finally, \(n_j^{-1}\le t_j^2\) and
\(\delta_j\asymp_{H,p}t_j\) imply
\(n_j^{-1}=o(\delta_j)\).
\end{proof}

The balanced two-part perturbation below also appears
in~\cite[Section~4]{CFS}, with the signs of the within-part
and between-part perturbations reversed.
Here we use it to show that linear dependence on the
hereditary counting error remains optimal when the
overall density is prescribed.
Throughout, overall density means \(2e(G)/n^2\).
A small adjustment of the edge set gives an error of
at most \(n^{-2}\), and exact density when \(p\) is rational.

\begin{proposition}[Sharpness at prescribed overall density]
\label{prop:fixeddensity}
Let \(H\) be a fixed graph with \(r\) vertices and \(m\ge1\)
edges, and let \(p\in(0,1)\).
There exist graphs \(G_j\) of strictly increasing orders
\(n_j\) and numbers \(\delta_j>0\), with \(\delta_j\to0\),
such that \(G_j\) satisfies \(\Pstar_{H,p}(\delta_j)\) and
\[
 \Disc_p(G_j)=\Theta_{H,p}(\delta_j),
 \qquad
 n_j^{-1}=o(\delta_j),
\]
while
\[
 \left|\frac{2e(G_j)}{n_j^2}-p\right|\le n_j^{-2}.
\]
If \(p\) is rational, the graphs can be chosen so that
\(2e(G_j)/n_j^2=p\) for every \(j\).
\end{proposition}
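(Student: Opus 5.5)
We will use a balanced two-part perturbation of a random graph of density \(p\), with perturbation sizes chosen so that the overall density stays at \(p\). Choose \(t_j\downarrow0\) with \(t_j<\min\{p,1-p\}/2\). Let \(n_j\) be even and split \(V=V_1\cup V_2\) into halves. On the constant kernel, put density \(p+t_j\) inside each part and \(p-t_j\) between the parts. Call the resulting block kernel \(W^{(j)}\). Its overall mean is exactly \(p\), and \(\cut{W^{(j)}-p}=\Theta(t_j)\): testing on \(V_1\times V_1\) gives \(t_j/4\), and the upper bound is immediate.

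We realize \(W^{(j)}\) by a random graph with independent edges of these probabilities. By the Hoeffding-plus-union-bound argument in the proof of Proposition~\ref{prop:sharpness}, we can take \(n_j\) large enough that the realization \(G_j\) satisfies \(\cut{W_{G_j}-W^{(j)}}\le t_j^2\) and \(n_j^{-1}\le t_j^2\).

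To fix the overall density, we add or delete at most about \(t_j^2n_j^2\) edges. Since \(pn_j^2\) lies within \(1\) of an even integer, we can reach \(|2e(G_j)-pn_j^2|\le1\), which gives the stated \(n_j^{-2}\) bound. When \(p=a/b\) we additionally take \(b\mid n_j\), so that \(pn_j^2/2\) is an integer (or \(2b\mid n_j\) if needed), and we hit it exactly. Each edited edge changes the cut norm by at most \(2/n_j^2\), so the total change is \(O(t_j^2)\).

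Define \(\delta_j\) as the smallest admissible error, as in the previous proof. We obtain \(\delta_j>0\) from singletons.

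\textbf{Upper bound on \(\delta_j\).} By Lemma~\ref{lem:counting} with the kernel \(W^{(j)}\), together with the collision count, we have \(\delta_j\le \Def_{H,p}(W^{(j)})+O(t_j^2+n_j^{-1})\). Since \(\cut{W^{(j)}-p}=O(t_j)\), Lemma~\ref{lem:counting} with \(W'=p\) gives \(\Def_{H,p}(W^{(j)})\le4m\cut{W^{(j)}-p}=O(t_j)\). Hence \(\delta_j=O(t_j)\).

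\textbf{Lower bound on \(\delta_j\).} This is needed for \(\Disc_p(G_j)=\Theta(\delta_j)\) and for \(n_j^{-1}=o(\delta_j)\). Here Theorem~\ref{thm:main} does the work: it gives \(\Disc_p(G_j)\le C\delta_j\). Since \(\Disc_p(G_j)\ge t_j/4-O(t_j^2)\), we get \(\delta_j\ge c\,t_j\). Combining the two bounds, \(\Disc_p(G_j)=\Theta(t_j)=\Theta(\delta_j)\), and \(n_j^{-1}\le t_j^2=o(\delta_j)\).

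The main obstacle would ordinarily be this lower bound on \(\delta_j\). A direct computation is delicate, because at first order the perturbation is invisible to global counts: the first-order term is \(mp^{m-1}\) times the mean of the perturbation, which is zero. Invoking the main theorem sidesteps this computation entirely, so the remaining care lies in the density adjustment and in checking that it preserves the \(O(t_j^2)\) cut-norm control.
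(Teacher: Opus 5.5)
Your proposal is correct. The construction, the random realization, the edge-count adjustment and the upper bound \(\delta_j=O(t_j)\) all match the paper's proof. The one genuine difference is the lower bound on \(\delta_j\).

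You obtain it by applying Theorem~\ref{thm:main} to \(G_j\). This is legitimate: \(G_j\) satisfies \(\Pstar_{H,p}(\delta_j)\) by definition, and the theorem is proved independently of this proposition, so there is no circularity.

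The paper instead computes the bound directly by testing the hereditary condition on \(S=V_1\), one whole part. On \(V_1\times V_1\) the block kernel is the constant \(p+t_j\), so \(t(H,W^{(j)};\one_{V_1})=2^{-r}(p+t_j)^m\), while \(p^m(|V_1|/n_j)^r=2^{-r}p^m\). This gives \(\delta_j\ge 2^{-r}\bigl((p+t_j)^m-p^m\bigr)-O(t_j^2)\ge 2^{-r}mp^{m-1}t_j-O(t_j^2)\).

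Your remark that a direct computation is delicate is therefore mistaken. The first-order term vanishes only for the global test \(S=V\), and the hereditary condition also tests subsets such as a single part.

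What each route buys: the paper's argument is elementary and gives an explicit constant in \(\delta_j=\Theta(t_j)\). It also keeps the sharpness example independent of the main theorem. Your version is quicker to write, but it inherits the non-explicit constant \(C(H,p)\), which depends on the qualitative Simonovits--S\'os threshold. It also makes the optimality statement rest on the very theorem whose sharpness it is meant to exhibit.
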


\begin{proof}
On the uniform probability space \(\{+1,-1\}\), put
\(f(x)=x\) and
\(W_\varepsilon(x,y)=p+\varepsilon f(x)f(y)\), where
\(0<\varepsilon<\min\{p,1-p\}\).
Since \(\int f\,d\mu=0\), we have
\(\int W_\varepsilon\,d\mu^2=p\).
Moreover,
\[
 \cut{W_\varepsilon-p}
 =\varepsilon
   \left(\max_{A\subseteq\{+1,-1\}}
   \left|\int_A f\,d\mu\right|\right)^2
 =\frac{\varepsilon}{4}.
\]
Testing the hereditary defect with the indicator of
the positive atom gives the lower bound below, while
Lemma~\ref{lem:counting} gives the upper bound:
\[
 2^{-r}mp^{m-1}\varepsilon
 \le2^{-r}\bigl((p+\varepsilon)^m-p^m\bigr)
 \le\Def_{H,p}(W_\varepsilon)
 \le m\varepsilon.
\]

Choose \(\varepsilon_j\downarrow0\) with
\(0<\varepsilon_j<\min\{p,1-p\}\).
For an even integer \(n_j\), partition the vertex set
into two sets of size \(n_j/2\), and define \(K_j\) to
equal \(p+\varepsilon_j\) within each part, including
on the diagonal, and \(p-\varepsilon_j\) between parts.
Then \(\int K_j\,d\mu^2=p\) and
\(\cut{K_j-p}=\varepsilon_j/4\).

Choose each unordered pair of distinct vertices as an
edge independently with probability given by \(K_j\).
The concentration argument in the proof of
Proposition~\ref{prop:sharpness} applies equally to these
independent, not necessarily identically distributed
edge indicators.
The discrepancy between the expected adjacency kernel
and \(K_j\) is supported on the diagonal and has
\(L^1\) norm at most \(1/n_j\).
For each fixed \(j\), we may therefore choose an
arbitrarily large even \(n_j\) and a realization
\(\widetilde G_j\) such that
\[
 \cut{\widetilde W_j-K_j}\le\varepsilon_j^2,
 \qquad
 n_j^{-1}\le\frac{\varepsilon_j^2}{2}.
\]
Here the random fluctuation and the diagonal error
are each taken to be at most \(\varepsilon_j^2/2\).
Choose the orders strictly increasing and sufficiently
large that an integer nearest to \(pn_j^2/2\) lies
between \(0\) and \(\binom{n_j}{2}\).

Let \(\widetilde e_j=e(\widetilde G_j)\), and choose an
integer \(E_j\) nearest to \(pn_j^2/2\).
Testing the preceding cut-norm bound on the whole
vertex set gives
\[
 \left|\widetilde e_j-\frac{pn_j^2}{2}\right|
 \le\frac{\varepsilon_j^2n_j^2}{2},
 \qquad
 |\widetilde e_j-E_j|
 \le\frac{\varepsilon_j^2n_j^2}{2}+\frac12.
\]
Add or delete exactly \(|\widetilde e_j-E_j|\) edges
to obtain a simple graph \(G_j\) with \(e(G_j)=E_j\).
Each modified edge changes the normalized \(L^1\)
distance between adjacency kernels by \(2/n_j^2\).
Thus, writing \(W_j\) for the adjacency kernel of \(G_j\),
we have
\[
 \begin{aligned}
 \rho_j:=\cut{W_j-K_j}
 &\le\varepsilon_j^2
      +\frac{2|\widetilde e_j-E_j|}{n_j^2}\\
 &\le2\varepsilon_j^2+n_j^{-2}
 =O(\varepsilon_j^2).
 \end{aligned}
\]
By the choice of \(E_j\),
\(\bigl|2e(G_j)/n_j^2-p\bigr|\le n_j^{-2}\).

If \(p=a/b\) is rational, make the initial choices of
\(n_j\) among sufficiently large multiples of \(2b\).
Then \(pn_j^2/2\) is an integer, so taking
\(E_j=pn_j^2/2\) gives overall density exactly \(p\).

Define
\[
 \delta_j
 =
 \frac1{n_j^r}
 \max_{S\subseteq V(G_j)}
 \bigl|N_H(G_j[S])-p^m|S|^r\bigr|.
\]
By definition, \(G_j\) satisfies
\(\Pstar_{H,p}(\delta_j)\), and a singleton test gives
\(\delta_j\ge p^m/n_j^r>0\).
Set
\(\eta_j=4m\rho_j+\binom r2/n_j
=O_H(\varepsilon_j^2)\).
Lemma~\ref{lem:counting} and the bound on noninjective
maps give, uniformly over \(S\subseteq V(G_j)\),
\[
 \left|
 \frac{N_H(G_j[S])}{n_j^r}
 -t(H,K_j;\one_S)
 \right|
 \le\eta_j.
\]

Applying Lemma~\ref{lem:counting} once more, now to
\(K_j\) and the constant kernel \(p\), yields
\(\delta_j\le m\varepsilon_j+\eta_j\).
On the other hand, taking \(S\) to be one entire part
of the partition gives
\[
 \delta_j
 \ge2^{-r}\bigl((p+\varepsilon_j)^m-p^m\bigr)-\eta_j.
\]
Since
\((p+\varepsilon_j)^m-p^m
=mp^{m-1}\varepsilon_j+O_{H,p}(\varepsilon_j^2)\),
these bounds imply
\(\delta_j=\Theta_{H,p}(\varepsilon_j)\), and hence
\(\delta_j\to0\).

Finally, the reverse triangle inequality gives
\[
 \left|\Disc_p(G_j)-\frac{\varepsilon_j}{4}\right|
 \le\rho_j=O(\varepsilon_j^2).
\]
Thus \(\Disc_p(G_j)=\Theta_{H,p}(\delta_j)\).
Since \(n_j^{-1}=O(\varepsilon_j^2)\), we also have
\(n_j^{-1}=o(\delta_j)\), completing the proof.
\end{proof}

Proposition~\ref{prop:sharpness} shows that any constant
\(C(H,p)\) for which Theorem~\ref{thm:main} holds must
satisfy \(C(H,p)\ge1/(mp^{m-1})\).
In particular, \(C(C_4,p)\ge1/(4p^3)\).
Proposition~\ref{prop:fixeddensity} further shows that
the conclusion cannot be improved to \(o(\delta)\)
even when the overall density is prescribed up to an
error of \(n^{-2}\), or exactly when \(p\) is rational.
Together, these results establish the optimal order
of dependence on \(\delta\).
They do not establish the optimality of the exponents
in the local remainder terms or of the dependence
of the constants on \(p\).

The local estimates are uniform over the cardinality
of the underlying probability space and the masses
of its atoms.
For the global theorem, the proof gives the admissible
choice \(C(H,p)=\max\{2L,\delta_*^{-1}\}\), where
\(\delta_*\) incorporates the qualitative quasirandomness
threshold.
Thus the polynomial bounds in \(p^{-1}\) for the local
constants do not yield a corresponding polynomial
bound for the global constant.
Throughout, \(H\) and \(p\in(0,1)\) are fixed; no
uniform assertion is made in a regime where \(p\)
varies with \(n\).

\section*{Declaration of generative AI and AI-assisted technologies}
The authors used a large language model during the preparation of this manuscript
for assistance with mathematical exploration, checking arguments,
organization and language editing. The authors are responsible for
the mathematical statements, proofs, references and final text.

\clearpage
\normalsize
\appendix
\section{Cycle expansions}\label{sec:fourcycle}

This appendix makes the preceding star expansions explicit for
the four-cycle and the five-cycle.

For \(H=C_4\), the parameters in Section~\ref{sec:proof}
are \(r=m=4\), \(s_2=4\), and \(\kappa=p^{-1}\).
Define
\(b(x)=\int_\Omega[U^2(x,z)]^2\,d\mu(z)\).
This is the one-root four-cycle density in \(U\).
In particular, \(b\ge0\) and
\(\int_\Omega b\,d\mu=\tr(T^4)\).

The averaged one-root expansion is
\begin{equation}\label{eq:c4root}
 R_{C_4}
 =2p^3(a+\mu_1)
  +p^2(a^2+2Ta+A_2)+\mathcal E_1,
\end{equation}
where
\[
 \mathcal E_1
 =2p^2\mu_1a+2p(T^2a+a\,Ta)+b.
\]
The four single-edge sets give the first term in
\eqref{eq:c4root}, while the four adjacent edge pairs
give the second.
The two opposite edge pairs contribute \(2p^2\mu_1a\).
Among the four three-edge paths, a fixed vertex is an
endpoint twice and an internal vertex twice, giving
\(2p(T^2a+a\,Ta)\).
The full cycle contributes \(b\).
Thus all fifteen nonempty edge sets are accounted for.

To estimate the remainder, note that
\(\norm{\mu_1a}{1}\le4d^2\) by \eqref{eq:abasic}.
Also, \(T^2a=T^3\one\), so
Lemma~\ref{lem:operators} gives
\(\norm{T^2a}{1}\le16d^2\).
Finally,
\(\norm{a\,Ta}{1}
\le\norm{a}{1}\norm{Ta}{\infty}
\le\norm{a}{1}^2\le16d^2\).
Together with Lemma~\ref{lem:fourthtrace}, these bounds
yield
\[
 \norm{\mathcal E_1}{1}
 \le(8p^2+64p)d^2+8K_Gd^{3/2}.
\]

The corresponding averaged two-root expansion is
\[
 M_{C_4}
 =\frac23p^3U+\frac23p^2U^2+\mathcal E_2,
 \qquad
 \cut{\mathcal E_2}
 \le C\bigl(d^{3/2}+\norm{a}{1}\bigr).
\]
There are eight ordered endpoint-root placements across
the four single-edge sets, and eight ordered leaf-root
placements across the four two-edge stars.
Dividing by \(r(r-1)=12\) gives the two coefficients
above.
The remaining terms are controlled as in the proof
of Lemma~\ref{lem:tworoot}.

Combining the degree estimate with the cancellation
argument in the proof of Proposition~\ref{prop:local}
gives \(d\le C_p(\xi+d^{3/2})\).
The fixed-threshold argument of
Section~\ref{sec:reduction} then yields
\(\Disc_p(G)\le C_p\delta\).

The five-cycle illustrates both the nonbipartite case
and the quadratic remainder in
Remark~\ref{rem:girthfive}.
Here \(r=m=5\), \(s_2=5\), and again \(\kappa=p^{-1}\).
The star expansions give
\[
 \begin{aligned}
 R_{C_5}
 &=p^4(2a+3\mu_1)
   +p^3(a^2+2Ta+2A_2)+\mathcal F_1,\\
 M_{C_5}
 &=\frac12p^4U+\frac12p^3U^2+\mathcal F_2,
 \end{aligned}
\]
where
\[
 \norm{\mathcal F_1}{1}\le Cd^2,
 \qquad
 \cut{\mathcal F_2}
 \le C\bigl(d^2+\norm{a}{1}\bigr).
\]
Indeed, every nonempty spanning subgraph of \(C_5\)
that is not a star with isolated vertices contains an
induced \(P_4\) or \(2K_2\).
Lemma~\ref{lem:nonstar} therefore gives a quadratic
bound for all nonstar contributions.
The remaining two-root star terms are bounded by
\(\norm{a}{1}\), as before.

The degree polynomial is
\(\Phi_{C_5}(t)=5p^3t(2p+t)\).
For \(t\ge-p\), it satisfies
\(t\Phi_{C_5}(t)\ge5p^4t^2\).
Thus the degree and two-root arguments apply with
\(\mathcal E=\xi+d^2\), giving
\(d\le C_p(\xi+d^2)\).
Applying the same fixed-threshold argument yields
\(\Disc_p(G)\le C_p\delta\).

Here the constants \(C\) in the remainder estimates
are absolute, while \(C_p\) may depend on the fixed
density \(p\).


\clearpage
\begin{thebibliography}{99}

\bibitem{AN}
N.~Alon and A.~Naor,
Approximating the cut-norm via Grothendieck's inequality,
\emph{SIAM J. Comput.} \textbf{35} (2006), 787--803.

\bibitem{CGW}
F.~R.~K.~Chung, R.~L.~Graham and R.~M.~Wilson,
Quasi-random graphs,
\emph{Combinatorica} \textbf{9} (1989), 345--362.

\bibitem{CFS}
D.~Conlon, J.~Fox and B.~Sudakov,
Hereditary quasirandomness without regularity,
\emph{Math. Proc. Cambridge Philos. Soc.}
\textbf{164} (2018), 385--399.

\bibitem{He}
X.~He,
Linear dependence between hereditary quasirandomness conditions,
\emph{Electron. J. Combin.} \textbf{25} (2018), no.~4,
Paper~P4.12.

\bibitem{Lovasz}
L.~Lov\'asz,
Subgraph densities in signed graphons and the local
Simonovits--Sidorenko conjecture,
\emph{Electron. J. Combin.} \textbf{18} (2011), no.~1,
Paper~P127.

\bibitem{RS}
C.~Reiher and M.~Schacht,
Forcing quasirandomness with triangles,
\emph{Forum Math. Sigma} \textbf{7} (2019), Paper~e9.

\bibitem{Simon}
B.~Simon,
\emph{Trace Ideals and Their Applications},
2nd ed., Mathematical Surveys and Monographs, vol.~120,
American Mathematical Society, Providence, RI, 2005.

\bibitem{SS}
M.~Simonovits and V.~T.~S\'os,
Hereditarily extended properties, quasi-random graphs and not
necessarily induced subgraphs,
\emph{Combinatorica} \textbf{17} (1997), 577--596.


\bibitem{Andres}
{S.~D.~Andres,
On characterizing game-perfect graphs by forbidden induced subgraphs,
\emph{Contrib. Discrete Math.} \textbf{7} (2012), no.~1, 21--34.}

\bibitem{Hoeffding}
{W.~Hoeffding,
Probability inequalities for sums of bounded random variables,
\emph{J. Amer. Statist. Assoc.} \textbf{58} (1963), 13--30.}

\bibitem{LL}
{H.~Li and X.~Liu,
Forcing quasirandomness via rooted \(F\)-densities,
preprint, arXiv:2608.08679v1 (2026).}

\bibitem{Thomas}
{E.~G.~F.~Thomas,
A polarization identity for multilinear maps,
\emph{Indag. Math.} \textbf{25} (2014), 468--474.}

\bibitem{WWY}
{G.~Wang, L.~Wang and J.~Yue,
Quasirandomness from balanced and repeated subgraph counts,
preprint, arXiv:2610.28124v3 (2026).}

\bibitem{LovaszBook}
{L.~Lov\'asz,
\emph{Large Networks and Graph Limits},
American Mathematical Society Colloquium Publications,
vol.~60, American Mathematical Society, Providence, RI, 2012.}

\end{thebibliography}
\end{document}